\documentclass[11pt,reqno]{amsart}

\usepackage[margin=1.15in]{geometry}
\usepackage{amsmath,amssymb,amsthm,mathtools}
\usepackage{aliascnt}
\usepackage{bm}
\usepackage{enumitem}
\usepackage{microtype}
\usepackage[hidelinks]{hyperref}
\usepackage[nameinlink,capitalise,noabbrev]{cleveref}
\usepackage{tikz}
\usetikzlibrary{arrows.meta,positioning,calc,automata}

\hypersetup{
  pdftitle={Poisson occupancy of paths on permuted regular graphs},
  pdfauthor={Rafael Chiclana},
  pdfsubject={Random products of Markov kernels and quantitative Poisson approximation},
  pdfkeywords={random Markov kernels, random permutations, Poisson approximation, cutoff, averaging process}
}

\allowdisplaybreaks
\newtheorem{theorem}{Theorem}[section]

\newaliascnt{proposition}{theorem}

\aliascntresetthe{proposition}

\newaliascnt{lemma}{theorem}
\newtheorem{lemma}[lemma]{Lemma}
\aliascntresetthe{lemma}

\newaliascnt{corollary}{theorem}
\newtheorem{corollary}[corollary]{Corollary}
\aliascntresetthe{corollary}

\newaliascnt{claim}{theorem}

\aliascntresetthe{claim}

\theoremstyle{definition}
\newaliascnt{definition}{theorem}

\aliascntresetthe{definition}

\newaliascnt{example}{theorem}

\aliascntresetthe{example}

\theoremstyle{remark}
\newaliascnt{remark}{theorem}

\aliascntresetthe{remark}

\newcommand*{\Var}{\operatorname{Var}}
\newcommand{\one}{\mathbf{1}}
\newcommand{\E}{\mathbb{E}}
\newcommand{\Pp}{\mathbb{P}}
\newcommand{\C}{\mathbb{C}}

\newcommand{\TV}{\mathrm{TV}}
\newcommand{\Pois}{\operatorname{Pois}}
\newcommand{\Bin}{\operatorname{Bin}}

\newcommand{\tr}{\operatorname{tr}}

\newcommand{\ind}{\mathbf{1}}
\newcommand{\dd}{\,\mathrm{d}}

\newcommand{\T}{\mathsf{T}}

\title{Poisson occupancy of paths on permuted regular graphs}

\author{Rafael Chiclana}
\address{Center of Interdisciplinary Data Science and
	Artificial Intelligence (CIDSAI),
	New York University Abu Dhabi,
	Abu Dhabi, United Arab Emirates}

\email{rcv9691@nyu.edu}
\date{}

\subjclass[2020]{Primary 60J10; Secondary 60F05, 60C05}
\keywords{Markov chains, averaging processes, cutoff, Poisson approximation, random permutations}

\begin{document}

\begin{abstract}
	In the classical occupancy problem, $B$ balls are placed independently
	and uniformly into $d$ boxes. We study a graph analogue of this model. Let $G$ be
	a directed $r$-regular graph on $d$ vertices and fix a vertex $i$.
	Consider all paths of length $L$ starting from $i$, and let $N_L(i,j)$
	denote the number of such paths ending at a vertex $j$. Thus the vertices
	play the role of boxes and the $r^L$ paths play the role of balls.
	
	In general, these occupancies depend strongly on the geometry of $G$. We show that inserting independent uniform permutations between
	successive layers restores the classical Poisson behavior at the
	empirical level. Although the path endpoints remain strongly dependent,
	their empirical occupancy distribution has the same limiting Poisson law
	as independent balls thrown into boxes.
	
	More precisely, at the critical scale $r^L/d\to\lambda\in(0,\infty)$, provided $rL/d \to 0$, the empirical path multiplicity distribution converges in Wasserstein
	distance to $\Pois(\lambda)$, independently of the geometry of the
	underlying graphs. We also prove a local Poisson
	limit theorem and a Berry--Esseen bound for the empirical distribution
	functions. These estimates determine the full total-variation transition
	of the normalized path multiplicities toward uniformity.
	
	For arbitrary doubly stochastic kernels, we further establish exact
	covariance and Frobenius norm identities, together with an exact contraction formula for the chi-square divergence. As applications, we study synchronous opinion averaging, obtaining an explicit cutoff profile for equal-size groups and quantitative convergence to consensus for heterogeneous group sizes.
\end{abstract}

\maketitle

\section{Introduction}

Consider the classical occupancy problem in which $B$ balls are thrown
independently and uniformly into $d$ boxes. If $N(j)$ denotes the
number of balls in box $j$, then
\[
N(j)\sim\Bin(B,1/d).
\]
Let $B=B(d)$ and assume that $B/d\to\lambda\in(0,\infty)$. Then the Poisson limit theorem gives
\[
N(j)\Rightarrow \Pois(\lambda).
\]
Rather than focusing on a fixed box, consider the random empirical
occupancy measure
\[
\nu_{d}
=
\frac1d\sum_{j=1}^d\delta_{N(j)}.
\]
The mass $\nu_{d}(\{k\})$ is the proportion of boxes containing exactly
$k$ balls. A classical law of large numbers for the occupancy problem
shows that, if $B/d\to\lambda\in(0,\infty)$, then the empirical measure converges to the
Poisson law. More precisely, 
\[
\E W_1\bigl(\nu_d,\Pois(\lambda)\bigr)\longrightarrow0,
\]
where $W_1$ denotes the  $1$-Wasserstein distance.

In this work we consider a graph analogue of this occupancy problem.
Let $G=(V,E)$ be an $r$-regular graph on $d$ vertices and fix $i\in V$.
Consider all paths of length $L$ starting from $i$. Note that there are exactly
\[
B=r^L
\]
such paths, and define $N_L(i,j)$ to be the number of them ending at vertex $j$. In this setting, vertices play the role of boxes and paths play the role of balls. Accordingly, one may consider the empirical measure
\begin{equation}\label{eq:empirical_measure}
\nu_L^{(i)}
=
\frac1d\sum_{j=1}^d\delta_{N_L(i,j)}.
\end{equation}

Consider the critical regime $B/d\to\lambda\in(0,\infty)$. In general, there is no reason for this measure to have a Poisson profile. For instance, if the connected component of $i$ has $o(d)$ vertices, then every path remains inside that component, and hence
\[
\nu_L^{(i)}(\{0\})\longrightarrow1.
\]
Thus the endpoint occupancies may depend strongly on the geometry of the graph $G$. To remove this geometric obstruction, we insert independent uniform
random permutations between successive layers. More precisely, before
each step the current vertices are permuted uniformly at random, after
which every path follows one edge of $G$. Crucially, the fresh permutations do not make the $r^L$ path endpoints
independent. At time $t-1$, the paths are grouped into $d$ piles of
sizes
\[
N_{t-1}(i,1),\ldots,N_{t-1}(i,d).
\]
The permutation $\pi_t$ does not act independently on the individual
paths. Instead, it randomly relabels these entire piles. In particular, all paths
currently occupying the same vertex remain together after the
permutation. The next layer does not break these piles apart either. Each
vertex $j$ receives the $r$ relabelled piles corresponding to its
incoming neighbors and adds their sizes. Thus the new multiplicities are obtained by shuffling and recombining
the old piles, rather than by independently reallocating the individual
paths. Moreover, the same permutation $\pi_t$ determines this
recombination for every vertex simultaneously, so the resulting pile
sizes are jointly dependent. Since the old pile sizes already encode
the randomness of all previous layers, this dependence is carried
through the entire evolution. See Figure \ref{fig:path-piles} for an illustration.

Nevertheless, our main result shows that this dependence does not affect
the limiting empirical occupancy law. Regardless of the geometry of
$G$, the endpoint multiplicities exhibit the same Poisson profile as in
the classical occupancy problem.

\begin{figure}[t]
	\centering
	\begin{tikzpicture}[
		>=Latex,
		every node/.style={font=\small},
		pile/.style={
			draw,
			rounded corners=2pt,
			minimum width=.72cm,
			minimum height=.52cm,
			inner sep=1pt,
			fill=white
		},
		highlighted/.style={
			pile,
			fill=gray!25,
			line width=.8pt
		},
		edge/.style={
			->,
			gray!40,
			line width=.4pt
		},
		active/.style={
			->,
			black,
			line width=.85pt
		},
		perm/.style={
			->,
			dashed,
			gray!65,
			line width=.55pt
		}
		]
		
		\def\xA{0}
		\def\xB{2.8}
		\def\xC{5.8}
		\def\xD{8.8}
		
		\def\yA{1.5}
		\def\yB{0.5}
		\def\yC{-0.5}
		\def\yD{-1.5}
		
		\node[font=\small\bfseries] at (\xA,2.35)
		{before $P_t$};
		\node[font=\small\bfseries] at (\xB,2.35)
		{after $P_t$};
		\node[font=\small\bfseries] at (\xC,2.35)
		{after $\Pi_{t+1}$};
		\node[font=\small\bfseries] at (\xD,2.35)
		{after $P_{t+1}$};
		
		\node[pile] (A1) at (\xA,\yA) {$1$};
		\node[pile] (A2) at (\xA,\yB) {$0$};
		\node[pile] (A3) at (\xA,\yC) {$1$};
		\node[pile] (A4) at (\xA,\yD) {$0$};
		
		\node[left=3pt of A1,font=\scriptsize] {$1$};
		\node[left=3pt of A2,font=\scriptsize] {$2$};
		\node[left=3pt of A3,font=\scriptsize] {$3$};
		\node[left=3pt of A4,font=\scriptsize] {$4$};
		
		%
		%
		\node[pile]        (B1) at (\xB,\yA) {$1$};
		\node[pile]        (B2) at (\xB,\yB) {$1$};
		\node[highlighted] (B3) at (\xB,\yC) {$2$};
		\node[pile]        (B4) at (\xB,\yD) {$0$};
		
		\draw[active] (A1.east) -- (B2.west);
		\draw[active] (A1.east) -- (B3.west);
		
		\draw[active] (A3.east) -- (B1.west);
		\draw[active] (A3.east) -- (B3.west);
		
		\node[font=\scriptsize] at (1.40,1.95)
		{$2$-regular layer};
		
		%
		\node[highlighted] (C1) at (\xC,\yA) {$2$};
		\node[pile]        (C2) at (\xC,\yB) {$1$};
		\node[pile]        (C3) at (\xC,\yC) {$1$};
		\node[pile]        (C4) at (\xC,\yD) {$0$};
		
		\draw[perm] (B3.east) to[out=25,in=180] (C1.west);
		\draw[perm] (B1.east) to[out=-20,in=165] (C2.west);
		\draw[perm] (B2.east) to[out=-15,in=175] (C3.west);
		\draw[perm] (B4.east) -- (C4.west);
		
		\node[font=\scriptsize,gray!100] at (4.30,1.95)
		{permutation $\Pi_{t+1}$};
		
		%
		%
		\node[pile] (D1) at (\xD,\yA) {$2$};
		\node[pile] (D2) at (\xD,\yB) {$2$};
		\node[pile] (D3) at (\xD,\yC) {$3$};
		\node[pile] (D4) at (\xD,\yD) {$1$};
		
		\draw[edge] (C2.east) -- (D1.west);
		\draw[edge] (C2.east) -- (D4.west);
		
		\draw[edge] (C3.east) -- (D1.west);
		\draw[edge] (C3.east) -- (D3.west);
		
		\draw[active]
		(C1.east) -- node[pos=.54,above,font=\scriptsize] {$2$} (D2.west);
		\draw[active]
		(C1.east) -- node[pos=.53,above,font=\scriptsize] {$2$} (D3.west);
		
		\node[font=\scriptsize] at (7.30,1.95)
		{$2$-regular layer};
		
		\node[font=\scriptsize] at (\xA,-2.03) {$\sum_j N(j)=2$};
		\node[font=\scriptsize] at (\xB,-2.03) {$\sum_j N(j)=4$};
		\node[font=\scriptsize] at (\xC,-2.03) {$\sum_j N(j)=4$};
		\node[font=\scriptsize] at (\xD,-2.03) {$\sum_j N(j)=8$};
		
	\end{tikzpicture}
	
	\caption{
		Illustration of two successive layers of the permuted graph occupancy model $R_L$ for a $2$-regular graph with four vertices.
	}
	\label{fig:path-piles}
\end{figure}

More generally, we allow both the graph and its degree to vary between
layers. Let $P_t$ be the transition matrix of an $r_t$-regular graph on
$[d]$, and let $\Pi_1,\ldots,\Pi_L$ be independent uniform permutation
matrices. We consider
\begin{equation}\label{eq:regular-product}
	R_L=\Pi_1P_1\Pi_2P_2\cdots\Pi_LP_L.
\end{equation}
For this model, the total number of paths of length $L$ is
\begin{equation}\label{eq:B_L-definition}
	B_L=\prod_{t=1}^L r_t.
\end{equation}
The same path-count interpretation holds: $N_L(i,j)$ denotes the number
of paths of length $L$ from $i$ to $j$, and the empirical multiplicity measure $\nu_L^{(i)}$ is defined as in \eqref{eq:empirical_measure}. We set
\[
\lambda_L=\frac{B_L}{d},
\qquad
S_L=\frac1d\sum_{t=1}^L r_t.
\]
Our main result gives a quantitative bound on the $1$-Wasserstein distance between
$\nu_L^{(i)}$ and $\Pois(\lambda_L)$. Notably, the bound depends only
on the degree sequence $(r_t)_{t=1}^L$ and is independent of the geometry of the
graphs.

\begin{theorem}\label{cor:wasserstein-poisson}
	Assume $S_L\leq1/4$. Then, for every $i\in[d]$,
	\begin{equation}\label{eq:wasserstein-bound-main}
		\E W_1\bigl(\nu_L^{(i)},\Pois(\lambda_L)\bigr)
		\leq
		14\lambda_L\sqrt{1+\lambda_L}
		\sqrt{
			S_L\frac{\log(1/S_L)}
			{\log\log(1/S_L)}
		}.
	\end{equation}
	In particular, if $\lambda_L\to\lambda\in[0,\infty)$ and $S_L\to0$, then $W_1\bigl(\nu_L^{(i)},\Pois(\lambda)\bigr)\to0$ in $L^1$.
\end{theorem}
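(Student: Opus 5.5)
The plan is to control the Wasserstein distance through the empirical distribution functions. Write $F_L^{(i)}(k)=\nu_L^{(i)}([0,k])$ and $F_\lambda$ for the $\Pois(\lambda_L)$ distribution function. Then
\[
W_1\bigl(\nu_L^{(i)},\Pois(\lambda_L)\bigr)=\sum_{k\ge0}\bigl|F_L^{(i)}(k)-F_\lambda(k)\bigr|.
\]
We split this sum at a cutoff $K$. Below $K$ we use a pointwise second-moment bound on $\nu_L^{(i)}(\{k\})-\Pois(\lambda_L)(\{k\})$. Above $K$ we use the tail bound
\[
\sum_{k>K}\bigl|F_L^{(i)}(k)-F_\lambda(k)\bigr|\le\frac1d\sum_j N_L(i,j)\ind\{N_L(i,j)>K\}+\sum_{k>K}\Pois(\lambda_L)(k,\infty).
\]

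\textbf{Pointwise estimate.} For fixed $k$, write $\E\,\nu_L^{(i)}(\{k\})=\Pp(N_L(i,j)=k)$ averaged over $j$. Because of the uniform permutations, $N_L(i,J)$ for uniform $J$ is obtained recursively: one takes $r_L$ entries of the previous occupancy vector at distinct uniformly chosen positions and sums them.

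\emph{Mean.} Compare this recursion with the one for independent balls, where the $r_L$ positions are chosen with replacement. The discrepancy at each layer is a collision probability of order $r_t^2/d$ for that step. Summing over layers gives a total variation error $O(\lambda_L S_L)$ between the law of $N_L(i,J)$ and $\Bin(B_L,1/d)$, and hence $O(\lambda_L S_L)$ from $\Pois(\lambda_L)$. The Binomial-to-Poisson error $\lambda_L/d$ is also dominated by $S_L\lambda_L$.

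\emph{Variance.} $\Var\,\nu_L^{(i)}(\{k\})$ is governed by pairs of vertices $j\ne j'$, i.e.\ by the covariance of $\ind\{N_L(i,j)=k\}$ and $\ind\{N_L(i,j')=k\}$. I would use the exact covariance identities stated earlier for doubly stochastic kernels. Pairs of paths ending at distinct targets share an ancestor pile only if their backward trees meet, which costs about $\sum_t r_t/d=S_L$, weighted by $\lambda_L(1+\lambda_L)$. The target bound is $\E|\nu_L^{(i)}(\{k\})-\Pois(\lambda_L)(\{k\})|\lesssim\lambda_L\sqrt{(1+\lambda_L)S_L}$, uniformly in $k$. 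Summing $F$-differences then gives $\E|F_L^{(i)}(k)-F_\lambda(k)|\lesssim\lambda_L\sqrt{(1+\lambda_L)S_L}$ for each $k$. Presumably this is the paper's Berry--Esseen bound for the empirical distribution functions, which I would invoke directly.

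\textbf{Choice of cutoff.} The head contributes $K\lambda_L\sqrt{(1+\lambda_L)S_L}$. For the tail, $\E\frac1d\sum_j N\ind\{N>K\}$ is controlled by the mean estimate plus the Poisson tail $\sum_{k>K}k\,\Pois(\lambda_L)(k)\approx\lambda_L^{K}/K!$. The difficulty is that the total variation coupling does not control the first moment. I would therefore instead bound factorial moments $\E\binom{N_L(i,J)}{m}$ directly via the recursion; these are at most those of $\Bin(B_L,1/d)$ up to $(1+O(S_L))$ factors. A Markov-type bound then gives a tail of order $\lambda_L(C\lambda_L)^K/K!$. Balancing $K\sqrt{S_L}$ against $1/K!$ suggests taking
\[
K\approx\frac{\log(1/S_L)}{\log\log(1/S_L)}.
\]
This gives $K!\ge S_L^{-1/2}$ roughly, and the total bound is of order $\lambda_L\sqrt{1+\lambda_L}\sqrt{S_L}\,K$. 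That is slightly worse than the stated rate, which has $\sqrt{K}$ rather than $K$.

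\textbf{Recovering $\sqrt{K}$.} To get $\sqrt{K}$, I would apply Cauchy--Schwarz to the whole head sum:
\[
\sum_{k\le K}\E|F_L-F|\le\sqrt{K}\,\Bigl(\sum_{k\le K}\E|F_L-F|^2\Bigr)^{1/2}.
\]
I would then show $\sum_k\E(F_L-F)^2\lesssim\lambda_L^2(1+\lambda_L)S_L$ summed over all $k$, by expressing it through $\E\frac1{d^2}\sum_{j,j'}|N_j-N_{j'}|$-type pair statistics. These are exactly controlled by the covariance/Frobenius identities.

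\textbf{Main obstacle.} The hardest part is this summed second-moment (variance) estimate uniform in the occupancy levels, since it requires handling the dependence created by pile recombination. The constant $14$ and the condition $S_L\le1/4$ would come from making the layer-collision bounds explicit.
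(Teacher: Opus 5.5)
\textbf{There is a genuine gap at the step your argument rests on.} Your skeleton matches the paper's: the CDF representation of $W_1$, a cutoff, Cauchy--Schwarz on the head to gain $\sqrt{K}$ instead of $K$, and a cutoff of order $\log(1/S_L)/\log\log(1/S_L)$. What you do not establish is the summed bound $\E\sum_{m\ge0}|c_m|^2\lesssim\lambda_L^2S_L$, where $c_m=\nu_L^{(i)}([0,m])-\Pp(Z\le m)$, and the tool you name cannot give it.

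The covariance/Frobenius identities are second-moment identities. For a fixed row they yield only the expected empirical variance,
\[
\E\int(k-\lambda_L)^2\,\dd\nu_L^{(i)}(k)=\lambda_L\frac{d-1}{d}\prod_t\frac{d-r_t}{d-1}.
\]
Equivalently, they control $\E\frac1{d^2}\sum_{j,j'}(N_j-N_{j'})^2$, with \emph{squared} differences. By the Cram\'er identity, $\sum_k(F_L-F)^2$ is a combination of $\frac1{d^2}\sum_{j\ne j'}|N_j-N_{j'}|$ and $\frac1d\sum_j\E_Z|N_j-Z|$. These depend on the full two-point law of entries within a row, and one quadratic statistic does not determine them: profiles with the Poisson mean and variance can have very different distribution functions. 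The backward-tree heuristic does not fill the gap either. The trees of two targets intersect heavily near the leaves, so the argument must weight intersections by pile sizes, and that weighted estimate is exactly what needs proof.

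The missing ingredient is Theorem~\ref{thm:complex-pgf}. Set $H(z)=M_L^{(i)}(z)-e^{\lambda_L(z-1)}$. Then $H(z)=(1-z)\sum_m c_mz^m$, so the $c_m$ are the Fourier coefficients of $H(z)/(1-z)$. Parseval together with $\E|H(z)|^2\le18\lambda_L^2|1-z|^2S_L$ gives $\E\sum_m|c_m|^2\le18\lambda_L^2S_L$ directly, because the factor $|1-z|^2$ cancels the denominator. The dependence created by pile recombination is handled inside that theorem, through the martingale recursion for $\Delta_t$. Its inputs are the without-replacement bias bound (Lemma~\ref{lem:one-step-bias}) and the Poincar\'e inequality on $S_d$ (Lemma~\ref{lem:one-step-variance}). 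If you invoke only its sup-norm consequence, Theorem~\ref{cor:poisson-berry-esseen}, you lose the $\sqrt{K}$ and pay the factor $K$ you observed.

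\textbf{The tail argument also has problems.} Your factorial-moment comparison with $\Bin(B_L,1/d)$ is asserted, not proved. Iterating it through the recursion runs into the same cross-vertex dependence, since it requires expectations of products of empirical factorial moments. It is also unnecessary. Write $T_\nu(m)=\nu_L^{(i)}((m,\infty))$ and $T_Z(m)=\Pp(Z>m)$. Since $\nu_L^{(i)}$ and $\Pois(\lambda_L)$ have the same mean, $\sum_{m\ge0}(T_Z(m)-T_\nu(m))=0$, and hence
\[
\sum_{m\ge N}|c_m|\le 2\sum_{m\ge N}\Pp(Z>m)+\sum_{m<N}|c_m|.
\]
So the empirical tail costs only a Poisson tail plus a second copy of the head.

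Finally, your cutoff must grow with $\lambda_L$. If $K$ does not depend on $\lambda_L$, the Poisson tail beyond $K$ is not small once $\lambda_L\gtrsim K$. The paper takes $N=\lceil(1+\lambda_L)\log(1/S_L)/\log\log(1/S_L)\rceil$ and uses a Chernoff bound to make the Poisson tail at most $\lambda_LS_L$. This choice of $N$, not a $(1+\lambda_L)$ factor in a variance estimate, is the source of $\sqrt{1+\lambda_L}$ in the stated bound.
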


The condition $S_L\to0$ cannot in general be omitted. Indeed, if
$L=1$ and $r_1/d\to\lambda\in(0,1)$, then
\[
\nu_1^{(i)}
=
\left(1-\frac{r_1}{d}\right)\delta_0
+
\frac{r_1}{d}\delta_1,
\]
which converges to a Bernoulli law rather than $\Pois(\lambda)$.

We also obtain sharper estimates for point probabilities and distribution functions. The first is a local Poisson approximation, analogous to a local central limit theorem.

\begin{theorem}\label{cor:local-poisson}
	Assume $S_L\leq 1/2$. For every $i\in[d]$,
	\begin{equation}\label{eq:local-bound-main}
		\E\sup_{k\geq0}
		\left|
		\nu_L^{(i)}(\{k\})
		-
		\Pp\bigl(Z_{\lambda_L}=k\bigr)
		\right|
		\leq
		\frac{12\sqrt{2}}{\pi}\lambda_L\sqrt{S_L},
	\end{equation}
	where $Z_{\lambda_L}\sim\Pois(\lambda_L)$.
\end{theorem}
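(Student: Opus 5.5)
The plan is to bound the point probabilities through a characteristic-function (Fourier) inversion argument, which is where the constant $\frac{12\sqrt2}{\pi}$ would come from. For $k\geq0$ we can write
\[
\nu_L^{(i)}(\{k\})-\Pp(Z_{\lambda_L}=k)
=\frac1{2\pi}\int_{-\pi}^{\pi}e^{-\mathrm{i}\theta k}\bigl(\phi_L(\theta)-e^{\lambda_L(e^{\mathrm{i}\theta}-1)}\bigr)\dd\theta ,
\]
where $\phi_L(\theta)=\frac1d\sum_j e^{\mathrm{i}\theta N_L(i,j)}$ is the empirical characteristic function. Taking the supremum over $k$ inside and then expectations gives
\[
\E\sup_k|\cdot|\le\frac1{2\pi}\int_{-\pi}^{\pi}\E\bigl|\phi_L(\theta)-\psi_L(\theta)\bigr|\dd\theta ,
\qquad \psi_L(\theta)=e^{\lambda_L(e^{\mathrm{i}\theta}-1)} .
\]
By Cauchy--Schwarz this is at most the integral of $\bigl(\E|\phi_L-\psi_L|^2\bigr)^{1/2}$. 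The whole proof therefore reduces to a pointwise $L^2$ bound of the form
\[
\E|\phi_L(\theta)-\psi_L(\theta)|^2\le C\lambda_L^2S_L\,|e^{\mathrm{i}\theta}-1|^2 .
\]
Since $\int_{-\pi}^{\pi}|e^{\mathrm{i}\theta}-1|\dd\theta=8$, such a bound yields a right-hand side of the form $\frac{8\sqrt C}{2\pi}\lambda_L\sqrt{S_L}$. Matching the stated constant would require roughly $C=9/2$, or some similar bookkeeping.

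To get the $L^2$ bound I would work with the layered recursion. Write $\phi_t(\theta)=\frac1d\sum_j e^{\mathrm{i}\theta N_t(i,j)}$. After the permutation $\Pi_{t}$, the vertex $j$ receives the sum of $r_t$ pile sizes coming from distinct, uniformly sampled old vertices, sampled without replacement. Conditionally on $\mathcal F_{t-1}$, the quantity $\E[e^{\mathrm{i}\theta N_t(i,j)}\mid\mathcal F_{t-1}]$ is the mean of the product of $r_t$ values $e^{\mathrm{i}\theta N_{t-1}}$ taken over a uniform $r_t$-subset. This differs from $\phi_{t-1}^{r_t}$ by a without-replacement correction of order $r_t^2/d$ times an empirical variance term. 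For the conditional variance, note that the $d$ terms $e^{\mathrm{i}\theta N_t(i,j)}$ are functions of one uniform permutation. Their covariances can be handled with the exact covariance identities for doubly stochastic kernels stated earlier in the paper (the covariance/Frobenius identities), or with a direct pair computation. This should give a variance of order $\frac{r_t}{d}$ times $\frac1d\sum_j|e^{\mathrm{i}\theta N_{t-1}}-\phi_{t-1}|^2$, which is small.

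Next I would compare $\phi_L$ with the deterministic product. The key point is that $\phi_{t-1}(\theta)$ itself concentrates near $\psi_{t-1}$, and that $\psi_t=\psi_{t-1}^{r_t}$ because $\lambda_t=r_t\lambda_{t-1}$. Telescoping the error $\phi_t-\phi_{t-1}^{r_t}$, together with the contraction $|z^{r}-w^{r}|\le r|z-w|$ on the unit disc, gives an error that is not amplified too badly. One also has to check that the empirical dispersion $\frac1d\sum_j|e^{\mathrm{i}\theta N_{t-1}}-\phi_{t-1}|^2$ is at most of order $|e^{\mathrm{i}\theta}-1|^2$ times the mean occupancy. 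The factor $|e^{\mathrm{i}\theta}-1|$ arises because every term vanishes at $\theta=0$. One bounds $|e^{\mathrm{i}\theta N}-1|\le N|e^{\mathrm{i}\theta}-1|$ and uses $\frac1d\sum_jN_{t}(i,j)^2$, which has expectation of order $\lambda_t(1+\lambda_t)$ by the Frobenius identity.

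The main obstacle is the amplification factor $r_t$ in the telescoping step, since $\prod r_t=B_L$ is huge. The remedy is to run the recursion on the linear scale rather than on characteristic functions. At early times $\lambda_t\ll1$, and both $\phi_t-1$ and $\psi_t-1$ are of size $\lambda_t|e^{\mathrm{i}\theta}-1|$. The natural quantity to track is therefore the relative error $(\phi_t-\psi_t)/\lambda_t$, whose increments should have second moment about $\frac{r_t}{d}|e^{\mathrm{i}\theta}-1|^2$ and should add up as martingale-like contributions to roughly $S_L|e^{\mathrm{i}\theta}-1|^2$. The hypothesis $S_L\le1/2$ is what I would use to absorb the without-replacement corrections and to sum the resulting geometric-type series into a clean constant. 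I expect getting exactly $\frac{12\sqrt2}{\pi}$ to require careful but routine constant tracking.
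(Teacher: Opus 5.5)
Your outer reduction is the same as the paper's. You use Fourier inversion on the circle, move $\sup_k$ inside the integral, and apply Cauchy--Schwarz to reduce to a pointwise bound $\E|M_L(z)-e^{\lambda_L(z-1)}|^2\le C\lambda_L^2|1-z|^2S_L$, which is Theorem~\ref{thm:complex-pgf}. That bound is then proved by a martingale recursion for the error divided by $\lambda_t$; the paper divides by $B_t=d\lambda_t$, which is equivalent. The recursion has contraction factor at most one, from $|z^r-w^r|\le r|z-w|$, and splits each step into a without-replacement bias and a conditional fluctuation. Comparing with $\psi_t$ instead of the binomial $m_t$ is harmless, since the initial discrepancy $\phi_0-\psi_0=O(|1-z|^2/d^2)$ simply propagates. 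On the constant: $\frac{8\sqrt C}{2\pi}=\frac{12\sqrt2}{\pi}$ requires $C=18$, not $9/2$, and $C=18$ is what the paper obtains.

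The genuine gap is the fluctuation estimate, which is the heart of the argument. You propose $\Var(\phi_t\mid\mathcal F_{t-1})\lesssim\frac{r_t}{d}\mathcal V_{t-1}$, where $\mathcal V_{t-1}=\frac1d\sum_j|z^{N_{t-1}(i,j)}-\phi_{t-1}|^2$.

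\begin{itemize}
\item \textbf{It does not give the rate.} In the sparse phase most piles are $0$ or $1$, so $\mathcal V_{t-1}\asymp\lambda_{t-1}|1-z|^2$. Dividing by $\lambda_t^2=r_t^2\lambda_{t-1}^2$, your relative-error increments would have second moment of order $|1-z|^2/(dr_t\lambda_{t-1})=|1-z|^2/B_t$, not the $\frac{r_t}{d}|1-z|^2$ you assert. For bounded degrees $\sum_t|1-z|^2/B_t$ is of order one, so no $\sqrt{S_L}$ rate emerges.
\item \textbf{It is not even true in general.} Take $t=2$ with $P_2$ made of complete blocks of size $r_1=r_2=r=d^{0.4}$. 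The conditional variance is then $\asymp|1-z|^4/d$. For fixed $z\neq1$ this is far larger than $\frac rd\mathcal V_1\asymp d^{-1.2}|1-z|^2$.
\end{itemize}

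What you need is a bound \emph{quadratic} in the occupancy, $\Var(\phi_t\mid\mathcal F_{t-1})\lesssim\frac{r_t^3}{d}\lambda_{t-1}^2|1-z|^2$. The reason it holds is that $\sum_jN_t(i,j)=B_t$ is deterministic, so first-order fluctuations cancel exactly. Only collisions of two nonempty piles in a common in-neighbourhood contribute.

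The paper obtains this from the Poincar\'e inequality for random transpositions (Lemma~\ref{lem:permutation-poincare}). A swap changes $T_P(x,\pi)$ by $\frac{X-Y}{d}\bigl(\sum_{j\in A}U_j-\sum_{j\in B}V_j\bigr)$. The $r$-regularity gives $|A|=|B|$, so the bracket is a sum of differences of near-one products. Both factors are then small, at distinct positions, which gives $\frac{4r^3}{d}q(x)^2$ with $q(x)\le|1-z|\lambda_{t-1}$ (Lemma~\ref{lem:one-step-variance}).

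The covariance identities of Theorem~\ref{thm:exact-covariance} cannot replace this step. They control second moments of the entries of $R_L$, i.e.\ linear statistics, not the conditional variance of the nonlinear functional $\frac1d\sum_jz^{N_t(i,j)}$.
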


We also obtain a Berry--Esseen bound for the distribution functions.

\begin{theorem}\label{cor:poisson-berry-esseen}
	Assume $S_L\leq 1/2$. For every $i\in[d]$,
	\begin{equation}\label{eq:kolmogorov-bound-main}
		\E\sup_{m\geq0}
		\left|
		\nu_L^{(i)}([0,m])
		-
		\Pp\bigl(Z_{\lambda_L}\leq m\bigr)
		\right|
		\leq
		\sqrt{18}\lambda_L\sqrt{S_L},
	\end{equation}
	where $Z_{\lambda_L}\sim\Pois(\lambda_L)$.
\end{theorem}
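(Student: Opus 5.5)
Theorem~\ref{cor:poisson-berry-esseen} will follow from a second-moment estimate on the empirical counts $\nu_L^{(i)}([0,m])$, made uniform in $m$ by one of two routes. The bound $\sqrt{18}\lambda_L\sqrt{S_L}$ has the same shape as Theorem~\ref{cor:local-poisson}, so I expect both to come from a common $L^2$ estimate.

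\textbf{Step 1: the key moment bound.} I would work with the generating function of the empirical measure,
\[
G_L(z)=\frac1d\sum_j z^{N_L(i,j)}, \qquad |z|=1,
\]
and compare it with the Poisson generating function $e^{\lambda_L(z-1)}$. Because the permutations are fresh and uniform, conditioning on the first $L-1$ layers and averaging over $\Pi_L$ gives exact recursions. The first moment $\E G_L(z)$ is a hypergeometric-type average, and the second moment $\E|G_L(z)|^2$ involves pairs of vertices. I would aim for
\[
\E\bigl|G_L(z)-e^{\lambda_L(z-1)}\bigr|^2\le C\,\lambda_L^2 S_L\,|z-1|^2 .
\]
Each layer contributes an error of order $r_t/d$, coming from without-replacement sampling and from collisions of piles at a common vertex. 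Summing over layers gives $S_L$.

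\textbf{Step 2: from generating functions to the distribution function.} For point masses, Fourier inversion on the circle gives
\[
\nu(\{k\})-\Pp(Z=k)=\frac1{2\pi}\int_{-\pi}^{\pi} e^{-ik\theta}\bigl(G(e^{i\theta})-e^{\lambda(e^{i\theta}-1)}\bigr)\,d\theta .
\]
For the distribution function I would instead use the summation
\[
\sum_{k\le m}\bigl(\nu(\{k\})-p_k\bigr),
\]
whose kernel $\sum_{k\le m}e^{-ik\theta}$ is bounded by $1/|\sin(\theta/2)|$. The factor $|z-1|\asymp|\theta|$ in Step~1 cancels this singularity. Bounding the supremum over $m$ by the integral of the absolute value, then applying Cauchy--Schwarz and Jensen, gives a uniform bound of order $\lambda_L\sqrt{S_L}$. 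The constant $\sqrt{18}$ would come from optimizing this integral.

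\textbf{Alternative route via the Kolmogorov distance.} Alternatively, I would write
\[
\sup_m\bigl|F_\nu(m)-F_{\Pois}(m)\bigr|=\sup_m\Bigl|\frac1d\sum_j\bigl(\ind\{N_L(i,j)\le m\}-\Pp(Z\le m)\bigr)\Bigr|.
\]
I would control its expectation through a bias term $|\E F_\nu(m)-F_{\Pois}(m)|$ and a variance term $\Var(F_\nu(m))$. The bias is the marginal law of a single vertex count. An earlier exact covariance identity for doubly stochastic kernels should bound pairwise dependence by $O(\lambda_L S_L)$ or smaller. Uniformity in $m$ then follows because the Poisson tail beyond $m\gtrsim\lambda+\log(1/S)$ is negligible. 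A chaining argument over monotone events, or a DKW-type bound, could replace a naive union bound.

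\textbf{Main obstacle.} The main difficulty is Step~1: getting an exact or sharp recursion for $\E|G_t(z)-e^{\lambda_t(z-1)}|^2$ across layers, despite the dependence between piles. I would first condition on $\mathcal F_{t-1}$ and use that $\Pi_t$ makes the $r_t$ piles entering each vertex a uniform without-replacement sample of the previous pile sizes. This should give the one-step error
\[
\frac{r_t}{d}\,|z-1|^2 \,\lambda_{t}^2 .
\]
I would then telescope this over $t$.
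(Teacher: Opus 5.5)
Your main route is the paper's. Step~1 is exactly Theorem~\ref{thm:complex-pgf}, which holds with $C=18$, and you should simply invoke it. If you re-derive it instead, ``telescoping'' the one-step errors is not enough on its own: the accumulated error is pushed through $w\mapsto w^{r_t}$, which is only $r_t$-Lipschitz on $\overline{\mathbb D}$. The paper handles this by tracking the normalized error $(M_t-m_t)/B_t$. Step~2 is the paper's Fourier-inversion argument.

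However, as written Step~2 does not give the stated constant. Let $H(z)=M_L^{(i)}(z)-e^{\lambda_L(z-1)}$ and $c_m=\nu_L^{(i)}([0,m])-\Pp(Z_{\lambda_L}\le m)$. Your kernel bound $\bigl|\sum_{k=0}^m e^{-ik\theta}\bigr|\le 1/|\sin(\theta/2)|=2/|1-e^{i\theta}|$ yields
\[
\E\sup_{m\ge0}|c_m|\le\frac{1}{2\pi}\int_{-\pi}^{\pi}\frac{2\bigl(\E|H(e^{i\theta})|^2\bigr)^{1/2}}{|1-e^{i\theta}|}\,d\theta\le 2\sqrt{18}\,\lambda_L\sqrt{S_L},
\]
which is twice the claimed bound. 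Nothing remains to optimize: after the cancellation the integrand is constant in $\theta$. So $\sqrt{18}$ cannot come from optimizing this integral.

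The missing observation is that part of your kernel integrates to zero. Write $\sum_{k=0}^m e^{-ik\theta}=\bigl(e^{-im\theta}-e^{i\theta}\bigr)/\bigl(1-e^{i\theta}\bigr)$. The $e^{i\theta}$ term contributes the $(-1)$st Fourier coefficient of $H(z)/(1-z)$, and this coefficient vanishes. Indeed, $H(1)=0$, so $H(z)/(1-z)$ is entire. Equivalently, by Abel summation, $H(z)/(1-z)=\sum_{k\ge0}c_kz^k$ on $\overline{\mathbb D}$, and the series converges absolutely because $\sum_m|c_m|\le2\lambda_L$ by equality of means. Thus $c_m$ is exactly the $m$-th Fourier coefficient of $H(e^{i\theta})/(1-e^{i\theta})$. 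The effective kernel then has modulus $1/|1-e^{i\theta}|$ and cancels the factor $|1-e^{i\theta}|$ from Theorem~\ref{thm:complex-pgf} exactly. Cauchy--Schwarz then gives $\sqrt{18}\lambda_L\sqrt{S_L}$; this is precisely the paper's proof.

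Your alternative route would not work as a substitute. Theorem~\ref{thm:exact-covariance} controls second moments of the entries of $R_L$, not covariances of the indicators $\ind\{N_L(i,j)\le m\}$. So it gives no bound on the fluctuations of $\nu_L^{(i)}([0,m])$. A union or chaining bound over $m$ would also not produce the constant $\sqrt{18}$.
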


The numerical constants in Theorems~\ref{cor:wasserstein-poisson}--\ref{cor:poisson-berry-esseen}
are not optimized. Our aim is to make the dependence on the parameters
explicit while keeping the argument transparent.

The Poisson approximation above can be used to describe the mixing
profile of $R_L$. We first note that
\[
(R_L)_{ij}=\frac{N_L(i,j)}{B_L}.
\]
Thus, the $i$-th row of $R_L$, denoted $(R_L)_i$, records how the total path mass is distributed among the vertices when the starting vertex is $i$. The uniform distribution
\[
u=\frac1d(1,\ldots,1)
\]
is stationary under every factor of the product and corresponds to the
equilibrium state in which the path mass is spread equally among all
vertices. The empirical multiplicity measure allows us to track the
distance from this equilibrium. Indeed,
\[
\bigl\|(R_L)_i-u\bigr\|_{\TV}
=
\frac{1}{2\lambda_L}
\int |k-\lambda_L|\,\dd\nu_L^{(i)}(k).
\]
In other words, the total-variation distance is a Lipschitz functional of the empirical multiplicity measure. Thus, in the critical regime $B_L\asymp d$,
Wasserstein convergence yields the limiting total-variation
profile. Before this scale, a deterministic support obstruction keeps
the rows far from uniform, while beyond it an exact $\chi^2$ contraction
estimate drives them to equilibrium. Together, these give the full
total-variation profile.

\begin{corollary}\label{thm:tv-profile}
	For every $i\in[d]$, the following hold.
	\begin{enumerate}[label=\textup{(\roman*)}]
		\item If $\lambda_L\leq 1$, then for every realization of $R_L$ we have
		\[
		\bigl\|(R_L)_i-u\bigr\|_{\TV}\geq 1-\lambda_L.
		\]
	\item If $S_L\leq1/2$, then
	\begin{equation}\label{eq:tv-profile-rate-main}
		\E\left|
		\bigl\|(R_L)_i-u\bigr\|_{\TV}
		-
		\Psi(\lambda_L)
		\right|
		\leq
		\sqrt{18}\lambda_L\sqrt{S_L},
	\end{equation}
	where
	\begin{equation}\label{eq:Psi-definition} 
		\Psi(\lambda)=\frac{1}{2\lambda} \E |Z_\lambda - \lambda|,\qquad Z_\lambda \sim \Pois(\lambda).
	\end{equation}
	Consequently, if $\lambda_L\to\lambda\in(0,\infty)$ and $S_L\to0$, then $\bigl\|(R_L)_i-u\bigr\|_{\TV}
	\to
	\Psi(\lambda)$
	in $L^1$.
		
		\item For every $L$,
		\[
		\E\bigl\|(R_L)_i-u\bigr\|_{\TV}
		\leq
		\frac{1}{2\sqrt{\lambda_L}}.
		\]
		In particular, if $\lambda_L\to\infty$, then $\bigl\|(R_L)_i-u\bigr\|_{\TV}\to 0$ in $L^1$.
	\end{enumerate}
\end{corollary}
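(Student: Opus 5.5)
We prove the three parts separately; each rests on the identity
\[
\bigl\|(R_L)_i-u\bigr\|_{\TV}=\frac{1}{2\lambda_L}\int|k-\lambda_L|\,\dd\nu_L^{(i)}(k),
\]
which follows from $(R_L)_{ij}=N_L(i,j)/B_L$ and $B_L=d\lambda_L$.

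\textbf{Part (i)} is a support argument. At most $B_L$ vertices $j$ can have $N_L(i,j)\geq1$, since there are only $B_L$ paths. Let $A$ be the support of the row $(R_L)_i$, so $|A|\leq B_L$. Then
\[
\bigl\|(R_L)_i-u\bigr\|_{\TV}\geq u(A^c)=1-\frac{|A|}{d}\geq 1-\lambda_L,
\]
and this holds for every realization.

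\textbf{Part (ii).} Write $\Psi(\lambda_L)=\frac{1}{2\lambda_L}\E|Z_{\lambda_L}-\lambda_L|$. The difference in \eqref{eq:tv-profile-rate-main} is then $\frac{1}{2\lambda_L}\bigl|\int f\,\dd\nu_L^{(i)}-\E f(Z_{\lambda_L})\bigr|$ with $f(k)=|k-\lambda_L|$. Invoking the Wasserstein bound would lose the rate, so I would not use it. Instead I would integrate by parts against distribution functions. For integer-supported measures $\mu,\nu$ with finite first moments and a $1$-Lipschitz $f$ whose increments are $f(m+1)-f(m)=\pm1$,
\[
\Bigl|\int f\,\dd\mu-\int f\,\dd\nu\Bigr|\leq\sum_{m\geq0}\bigl|F_\mu(m)-F_\nu(m)\bigr|.
\]
That sum is again a $W_1$-type quantity, though.

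A better route uses that $f$ is affine on each side of $\lambda_L$. Both measures have mean $\lambda_L$: $\int k\,\dd\nu_L^{(i)}=B_L/d$ exactly. Hence
\[
\int|k-\lambda_L|\,\dd\mu=2\int(\lambda_L-k)_+\,\dd\mu=2\sum_{m<\lambda_L}F_\mu(m),
\]
using the standard summation identity $\E(\lambda-X)_+=\sum_{0\leq m<\lambda}F(m)$, up to a fractional-part term when $\lambda$ is not an integer. That boundary term is $(\lambda-\lfloor\lambda\rfloor)F(\lfloor\lambda\rfloor)$, still controlled by the Kolmogorov distance. The difference is therefore at most $2(\lambda_L+1)\sup_m|F_\nu-F_{\Pois}|$.

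Dividing by $2\lambda_L$ and applying \cref{cor:poisson-berry-esseen} gives a bound of order $(1+\lambda_L)\sqrt{18 S_L}$. To reach exactly $\sqrt{18}\lambda_L\sqrt{S_L}$, I would instead look for a sharper bound in the Berry–Esseen proof itself. That proof presumably bounds $\sum_m\E|F_\nu(m)-F(m)|^2$-type quantities through the covariance identity, and I would reuse that structure with only $\lceil\lambda_L\rceil$ terms. Matching the stated constant, and handling small $\lambda_L$ where the count $\lceil\lambda_L\rceil$ is not $\lesssim\lambda_L$, is the main obstacle.

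For small $\lambda_L$ there is a simpler fallback. Since $\int(\lambda_L-k)_+\,\dd\mu=\lambda_L\,\mu(\{0\})$ when $\lambda_L\leq1$, the difference equals $|\nu(\{0\})-e^{-\lambda_L}|$. This is bounded by the Kolmogorov error at $m=0$, which gives the right scaling.

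\textbf{Part (iii)} is the $\chi^2$ estimate. By Cauchy–Schwarz,
\[
\|(R_L)_i-u\|_{\TV}\leq\tfrac12\sqrt{d\sum_j\bigl((R_L)_{ij}-1/d\bigr)^2},
\]
and Jensen moves the expectation inside the square root. The exact chi-square contraction formula for doubly stochastic products (stated in the abstract) should give
\[
\E\,d\sum_j\bigl((R_L)_{ij}-1/d\bigr)^2\leq\frac{d}{B_L}=\frac{1}{\lambda_L}.
\]
The mechanism is that each uniform permutation sends a mean-zero vector to an isotropic one on $u^\perp$, and an $r$-regular $P_t$ then contracts the expected squared norm by $\|P_t\|_F^2/d$-type factors at most $1/r_t$. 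Starting from $\chi^2(\delta_i,u)=d-1\leq d$ and multiplying the contractions gives $d/B_L$. Taking square roots yields $\frac{1}{2\sqrt{\lambda_L}}$.
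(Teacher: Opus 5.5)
\textbf{Parts (i) and (iii).} These are correct and coincide with the paper's arguments. For (i) this is the support count. For (iii) it is $\|\cdot\|_{\TV}\le\frac12\sqrt{\chi^2}$, Jensen, and the exact identity $\E\chi^2((R_L)_i\|u)=\frac{d-1}{B_L}\prod_t\frac{d-r_t}{d-1}\le 1/\lambda_L$ from Corollary~\ref{cor:regular-l2}.

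\textbf{Part (ii).} As written, it does not prove \eqref{eq:tv-profile-rate-main}. Your estimate $2(\lambda_L+1)\sup_m|F_\nu(m)-F_{\Pois}(m)|$ only gives $(1+\lambda_L)\sqrt{18S_L}$. Your $\lambda_L\le 1$ fallback covers only part of the range, and you leave matching the constant as an open obstacle.

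The loss is a counting slip, not a structural problem. After reducing to $\int(\lambda_L-k)_+\,\dd\mu$ (correct, since both measures have mean $\lambda_L$), use the layer-cake formula $(\lambda-x)_+=\int_0^\lambda\ind_{\{x<t\}}\,\dd t$. For any probability measure $\mu$ on $\mathbb N_0$ it gives
\[
\int(\lambda-k)_+\,\dd\mu(k)=\int_0^\lambda\mu([0,t))\,\dd t=\sum_{m=0}^{\lfloor\lambda\rfloor-1}F_\mu(m)+\bigl(\lambda-\lfloor\lambda\rfloor\bigr)F_\mu(\lfloor\lambda\rfloor).
\]
The weights here sum to exactly $\lambda$ (namely $\lfloor\lambda\rfloor$ ones plus the fractional part), not $\lambda+1$. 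Applying this to $\nu_L^{(i)}$ and $\Pois(\lambda_L)$ and dividing by $\lambda_L$ gives
\[
\Bigl|\,\bigl\|(R_L)_i-u\bigr\|_{\TV}-\Psi(\lambda_L)\Bigr|
\le\frac{1}{\lambda_L}\int_0^{\lambda_L}\Bigl|\nu_L^{(i)}([0,t))-\Pp(Z_{\lambda_L}<t)\Bigr|\,\dd t
\le\sup_{m\ge0}\Bigl|\nu_L^{(i)}([0,m])-\Pp(Z_{\lambda_L}\le m)\Bigr|.
\]
Theorem~\ref{cor:poisson-berry-esseen} then yields exactly $\sqrt{18}\lambda_L\sqrt{S_L}$; this is the paper's proof.

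Reopening the Berry--Esseen argument is therefore unnecessary. Your guess about its structure is also off: that theorem comes from the generating-function estimate of Theorem~\ref{thm:complex-pgf} via Fourier inversion of $H(z)/(1-z)$, not from the covariance identity.

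\textbf{The limit statement in (ii).} To pass from $\Psi(\lambda_L)$ to $\Psi(\lambda)$ you also need continuity of $\Psi$ on $(0,\infty)$, which you do not mention. Once that is noted, even your weaker bound would suffice for the qualitative $L^1$ convergence.
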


The limiting profile admits the explicit formula
\[
\Psi(\lambda)
=
\Pp\bigl(Z_\lambda=\lfloor\lambda\rfloor\bigr)
=
e^{-\lambda}
\frac{\lambda^{\lfloor\lambda\rfloor}}
{\lfloor\lambda\rfloor!}.
\]
In particular, $\Psi(\lambda)\to1$ as $\lambda\downarrow0$, while
Stirling's formula gives
\[
\Psi(\lambda)\sim\frac{1}{\sqrt{2\pi\lambda}}
\qquad\text{as }\lambda\to\infty.
\]

\subsection{Main ingredients}

The technical core behind the Poisson approximation is a uniform estimate for the empirical probability generating function. For
\[
M_L^{(i)}(z)
=
\int z^k\,\dd\nu_L^{(i)}(k),
\qquad |z|\leq1,
\]
we prove the following.

\begin{theorem}\label{thm:complex-pgf}
	Assume $S_L\leq1/2$. Then, for every $i\in[d]$ and every $z\in\overline{\mathbb D}$,
	\begin{equation}\label{eq:poisson-pgf-main}
		\E\left|
		M_L^{(i)}(z)-e^{\lambda_L(z-1)}
		\right|^2
		\leq
		18\lambda_L^2|1-z|^2S_L.
	\end{equation}
\end{theorem}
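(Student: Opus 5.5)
The plan is to run a one-step recursion in $t$ for the deviation $D_t:=M_t^{(i)}(z)-e^{\lambda_t(z-1)}$. The key point is to measure $D_t$ on the natural scale $\lambda_t|1-z|$, not in absolute terms, so that errors inherited from earlier layers are not amplified. Write $N_t(j)=N_t(i,j)$, $\lambda_t=B_t/d$, and let $\mathcal F_{t-1}$ be the $\sigma$-field generated by $\Pi_1,\dots,\Pi_{t-1}$.

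\emph{Conditional structure of one layer.} Since $R_t=R_{t-1}\Pi_tP_t$, we have $N_t(j)=\sum_{k\in \mathrm{in}(j)}N_{t-1}(\pi_t(k))$, where $\mathrm{in}(j)$ is the set of $r_t$ in-neighbours of $j$ in layer $t$. Conditionally on $\mathcal F_{t-1}$, the multiset $\{\pi_t(k):k\in\mathrm{in}(j)\}$ is a uniform sample of $r_t$ distinct indices. Hence
\[
\E\bigl[z^{N_t(j)}\mid\mathcal F_{t-1}\bigr]
\]
is the without-replacement analogue of $M_{t-1}(z)^{r_t}$. I would write $M_t=M_{t-1}^{r_t}+\mathcal E_t+\xi_t$, where:
\begin{itemize}
\item $\mathcal E_t$ is the deterministic (given $\mathcal F_{t-1}$) without-replacement correction;
\item $\xi_t$ is the centred conditional fluctuation of $\frac1d\sum_j z^{N_t(j)}$.
\end{itemize}
To bound both terms, write $z^{N}=1-(1-z^{N})$ and use $|1-z^{N}|\le N|1-z|$. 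The mean of $N_{t-1}$ under $\nu_{t-1}$ is exactly $\lambda_{t-1}$. Then:
\begin{itemize}
\item $|\mathcal E_t|\lesssim (r_t^2/d)\,\lambda_{t-1}|1-z|$, up to terms quadratic in small quantities.
\item For $\xi_t$, the variance of a sum of $d$ products driven by one uniform permutation is controlled by the number of pairs $(j,j')$ whose in-neighbourhoods interact. An exchangeable-pair or Hoeffding-type computation should give $\E[|\xi_t|^2\mid\mathcal F_{t-1}]\lesssim \frac{r_t}{d}\,\lambda_t^2|1-z|^2$.
\end{itemize}

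\emph{Propagation.} Since $e^{\lambda_t(z-1)}=\bigl(e^{\lambda_{t-1}(z-1)}\bigr)^{r_t}$, the elementary bound $|a^r-b^r|\le r|a-b|$ for $|a|,|b|\le1$ gives
\[
|M_{t-1}^{r_t}-e^{\lambda_t(z-1)}|\le r_t|D_{t-1}|.
\]
Normalising $\delta_t:=D_t/(\lambda_t|1-z|)$ and using $r_t\lambda_{t-1}=\lambda_t$, the old error passes through with factor $1$. The new errors contribute $O(r_t/d)$ to $\E|\delta_t|^2$. The fluctuation terms $\xi_t$ are martingale differences, so their contributions add in $L^2$ rather than in $L^1$. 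This yields
\[
\E|\delta_L|^2\le \E|\delta_0|^2+C\sum_t r_t/d=O(S_L),
\]
since $\delta_0=O(|1-z|/d)$ is negligible. The bias terms $\mathcal E_t$ add linearly, and $\sum_t r_t^2/d$ must be absorbed. I would do this by refining the bound to $r_t\cdot(\text{current }\lambda\text{-scale})/d$, or by using $S_L\le 1/2$ so that the squared sum of biases is at most $S_L$ times a constant. The hypothesis $S_L\le1/2$ also keeps the geometric factors such as $(1-r_t/d)^{-1}$ bounded, which should produce the constant $18$.

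\emph{Main obstacle.} I expect the hardest step to be the conditional variance bound for $\xi_t$ at the correct scale $\frac{r_t}{d}\lambda_t^2|1-z|^2$. A crude bound of order $r_t^2/d$ would not propagate correctly. The first thing I would try is to expand
\[
\frac1d\sum_j z^{N_t(j)}-1=-\frac1d\sum_j\bigl(1-z^{N_t(j)}\bigr).
\]
I would then note that each summand is supported on the event that $j$ receives a nonempty pile, and compute the covariances under the uniform permutation directly. Only the pairs $j,j'$ sharing permuted sources contribute at leading order, and there are $O(r_t)$ such partners per $j$; the remaining pairs are $O(1/d)$-decorrelated. This is exactly what produces the factor $r_t/d$ per layer, and hence $S_L$ after summing over layers.
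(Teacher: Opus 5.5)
\textbf{Gap: the conditional variance bound.} Most of your architecture matches the paper's proof. The normalised error passes through each layer with factor at most $1$; the paper's $\Delta_t=(M_t-m_t)/B_t$ with $|\gamma_t|\le1$ is your $\delta_t$ up to the factor $d/|1-z|$ and a change of comparator. The without-replacement bias is $O(r_t/d)$ after normalisation, so there is no $\sum_t r_t^2/d$ to absorb. You still must treat the cross term between $\delta_{t-1}$ and the bias, e.g.\ via the deterministic bound $|\delta_{t-1}|\le 2$. Comparing directly with $e^{\lambda_t(z-1)}$ instead of the binomial comparator $m_t$ is harmless, since $|\delta_0|\le|1-z|/(2d)$ reproduces the binomial--Poisson lemma.

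The gap is the step you flag yourself. The bound $\E[|\xi_t|^2\mid\mathcal F_{t-1}]\lesssim\frac{r_t}{d}\lambda_t^2|1-z|^2$ is only asserted, and the covariance computation you propose would not give it. The target is quadratic in the mass $\lambda_{t-1}$, whereas each $\Var(z^{N_t(j)})$ and each covariance of an overlapping pair is only linear in it. For instance, take $t=2$, all $r_t=r$, and $r^3\ll d$. Then $N_1$ is the indicator of a set of $r$ vertices and $\Pp(N_2(j)\ge1)\asymp r^2/d$. So the diagonal part $\frac1{d^2}\sum_j\Var(z^{N_2(j)})$ is of order $\frac{r^2}{d^2}|1-z|^2$, against the target $\frac{r^5}{d^3}|1-z|^2$. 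After dividing by $\lambda_2^2|1-z|^2$ it contributes order $1/r^2$ rather than $r/d$, and the recursion no longer sums to $O(S_L)$.

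The true variance is small only because $\sum_jN_t(j)=B_t$ is deterministic, so the part of $M_t$ linear in $z-1$ does not fluctuate. The positive diagonal and overlap terms are cancelled by the negative covariances $\mathrm{Cov}(N_t(j),N_t(j'))$ of the $\sim d^2$ pairs with disjoint in-neighbourhoods. Each of these is $O(1/d)$, but collectively they are of leading order. So discarding them, or bounding pair covariances in absolute value, fails. Note also that $j$ can have up to $r_t^2$ overlapping partners, not $O(r_t)$.

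\textbf{What is needed.} You need a variance bound with this cancellation built in. The paper uses the Poincar\'e inequality for random transpositions, $\Var f(\pi)\le\frac{d-1}{4}\E|f(\pi)-f(\pi\circ\tau_{k,\ell})|^2$. It combines this with the exact factorisation $T_P(x,\pi)-T_P(x,\pi\circ\tau_{k,\ell})=\frac{X-Y}{d}\bigl(\sum_{j\in A}U_j-\sum_{j\in B}V_j\bigr)$, where $A=\Gamma_P^+(k)\setminus\Gamma_P^+(\ell)$ and $B=\Gamma_P^+(\ell)\setminus\Gamma_P^+(k)$. Regularity gives $|A|=|B|$, so the bracket can be paired term by term. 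Both factors are then bounded by sums of $a_v=|1-x_v|$ over \emph{distinct} positions. Hence only $\E[a_{\pi(s)}a_{\pi(v)}]$ with $s\neq v$ appears, and this is at most $\frac{d}{d-1}q(x)^2$. The result is $\Var_\pi T_P(x,\pi)\le\frac{4r^3}{d}q(x)^2\le\frac{4r_t}{d}\lambda_t^2|1-z|^2$, exactly the scale your recursion requires. Your passing mention of an exchangeable pair points in this direction, but that computation is the heart of the proof and must actually be carried out.
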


We note that the probability generating function of $\Pois(\lambda_L)$ is
exactly $e^{\lambda_L(z-1)}$. Thus Theorem~\ref{thm:complex-pgf} gives a quantitative $L^2$ bound for the deviation of the empirical generating function from the Poisson generating function. This is the main analytic input behind our main results \Cref{cor:wasserstein-poisson,cor:local-poisson,cor:poisson-berry-esseen}. The factor $|1-z|^2$ is particularly important, as it cancels the denominator
that appears when cumulative probabilities are recovered from generating
functions, allowing us to control distribution functions by Fourier
analysis.

The postcritical estimate in Corollary~\ref{thm:tv-profile} is obtained
from a different ingredient: an exact covariance identity for products
of arbitrary doubly stochastic matrices interlaced with independent
uniform permutations. 

Let $P_1,\ldots,P_L \in \mathbb{R}^{d\times d}$ be arbitrary doubly stochastic matrices and consider the product $R_L$ as in \eqref{eq:regular-product}. For $t=1,\ldots,L$, set
\[
J=\frac{1}{d}\one\one^\top,
\qquad
Q_t=P_t-J,
\qquad
\alpha(P_t)=\frac{\|Q_t\|_F^2}{d-1}.
\]
Then $0\leq\alpha(P_t)\leq1$. Averaging over the independent uniform
permutations makes the centered covariance isotropic, while each layer
contributes only through the scalar factor $\alpha(P_t)$. This yields the following exact contraction formula.

\begin{theorem}\label{thm:exact-covariance}
	Let $P_1,\ldots,P_L$ be arbitrary doubly stochastic kernels on $[d]$,
	and let $R_L$ be the product defined in \eqref{eq:regular-product}, where
	$\Pi_1,\ldots,\Pi_L$ are independent uniform permutation matrices. Then
	\begin{equation}\label{eq:left-covariance}
		\E\bigl[(R_L-J)(R_L-J)^\top\bigr]
		=
		\left(\prod_{t=1}^L\alpha(P_t)\right)
		(I-J).
	\end{equation}
\end{theorem}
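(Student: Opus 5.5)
The plan is induction on $L$ via a one-step recursion. Since every $\Pi_t$ and every $P_t$ is doubly stochastic, each commutes with $J$, satisfies $\Pi_t J = J\Pi_t = J$ (and likewise for $P_t$), and $R_L$ is itself doubly stochastic. Hence $R_L J = J R_L = J$ and
\[
R_L - J = (\Pi_1 - J)\cdots
\]
More usefully, write $R_L = R_{L-1}\Pi_L P_L$ with $R_0 = I$. Then
\[
R_L - J = (R_{L-1}-J)\,\Pi_L P_L + J\,\Pi_L P_L - J = (R_{L-1}-J)\,\Pi_L P_L .
\]
Since $(R_{L-1}-J)J = 0$, we may replace $P_L$ by $Q_L = P_L - J$: indeed $(R_{L-1}-J)\Pi_L J = (R_{L-1}-J)J = 0$, so $R_L - J = (R_{L-1}-J)\Pi_L Q_L$. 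Consequently
\[
(R_L-J)(R_L-J)^\top = (R_{L-1}-J)\,\Pi_L Q_L Q_L^\top \Pi_L^\top\,(R_{L-1}-J)^\top .
\]
Because $\Pi_L$ is independent of $R_{L-1}$, we condition on $R_{L-1}$ and average over $\Pi_L$ first.

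The key lemma is that for any symmetric $A$ with $A\one = 0$ (here $A = Q_LQ_L^\top$, which kills $\one$ because $P_L$ is doubly stochastic),
\[
\E[\Pi A \Pi^\top] = \frac{\tr A}{d-1}\,(I-J),
\]
when $\Pi$ is a uniform permutation matrix. To prove it, note that $\E[\Pi A\Pi^\top]$ commutes with all permutation matrices. The commutant of the permutation representation is spanned by $I$ and $\one\one^\top$, so the average equals $aI + bJ$. The condition $A\one = 0$ is preserved, which forces $a + b = 0$, and taking traces gives $a(d-1) = \tr A$. We then have $\tr(Q_LQ_L^\top) = \|Q_L\|_F^2$, so the average equals $\alpha(P_L)(I-J)$. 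This also explains why $0 \le \alpha \le 1$: $\|Q\|_F^2$ is the sum of the squared singular values of $P$ on $\one^\perp$, and each of these is at most $1$ since doubly stochastic matrices have operator norm at most $1$.

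Plugging in gives
\[
\E\bigl[(R_L-J)(R_L-J)^\top \,\big|\, R_{L-1}\bigr] = \alpha(P_L)\,(R_{L-1}-J)(I-J)(R_{L-1}-J)^\top = \alpha(P_L)\,(R_{L-1}-J)(R_{L-1}-J)^\top,
\]
using $(R_{L-1}-J)J = 0$. Taking expectations and inducing, with the base case $R_0 - J = I - J$ and $(I-J)(I-J)^\top = I-J$, yields the claimed product formula.

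The main obstacle is the ordering. One must peel off the last factor, so that the conjugation by $\Pi_L$ sits in the middle and the independent permutation averages cleanly, and one must check the centering identities that allow $P_L$ to be replaced by $Q_L$. The invariant-theory step can alternatively be verified entrywise: averaging $A_{\sigma(k)\sigma(l)}$ gives the mean diagonal on the diagonal and the mean off-diagonal entry off it, and these are $\tr A/d$ and $-\tr A/(d(d-1))$ respectively, because the row sums of $A$ vanish.
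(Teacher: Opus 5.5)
Your proof is correct and follows the paper's argument. Your one-step identity $R_L-J=(R_{L-1}-J)\Pi_LQ_L$ is the inductive form of the paper's centered-product lemma, and, as in the paper, you condition on the first $L-1$ permutations, apply the isotropy identity $\E[\Pi A\Pi^\top]=\frac{\tr A}{d-1}(I-J)$ to $A=Q_LQ_L^\top$, absorb the factor $I-J$, and iterate (the dangling fragment ``$R_L-J=(\Pi_1-J)\cdots$'' should just be deleted).
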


As an immediate consequence, we obtain exact formulas for the expected Frobenius distance from \(J\) and for the expected \(\chi^2\)-divergence from equilibrium.

\begin{corollary}\label{cor:exact-l2}
	Under the assumptions of Theorem~\ref{thm:exact-covariance},
	\begin{equation}\label{eq:exact-frobenius}
		\E\|R_L-J\|_F^2
		=
		(d-1)\prod_{t=1}^L\alpha(P_t).
	\end{equation}
	Moreover, for every probability row vector $\mu$ on $[d]$,
	\begin{equation}\label{eq:exact-chi-square-general}
		\E\chi^2\bigl(\mu R_L\,\|\,u\bigr)
		=
		\chi^2(\mu\|u)\prod_{t=1}^L\alpha(P_t).
	\end{equation}
\end{corollary}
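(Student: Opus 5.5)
The plan is to derive both identities directly from Theorem~\ref{thm:exact-covariance} by taking traces and by sandwiching with suitable vectors.

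For \eqref{eq:exact-frobenius}, observe that $\|R_L-J\|_F^2=\tr\bigl[(R_L-J)(R_L-J)^\top\bigr]$. Since the trace is linear, it commutes with expectation. Applying \eqref{eq:left-covariance} then gives $\E\|R_L-J\|_F^2=\bigl(\prod_t\alpha(P_t)\bigr)\tr(I-J)$. Because $\tr J=1$, we have $\tr(I-J)=d-1$, which is the claimed formula.

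For \eqref{eq:exact-chi-square-general}, we first express the $\chi^2$-divergence against $u$ as a squared Euclidean norm. For a probability row vector $\nu$,
\[
\chi^2(\nu\|u)=\sum_j\frac{(\nu_j-1/d)^2}{1/d}=d\,\|\nu-u\|_2^2 .
\]
Each factor $\Pi_t$ and $P_t$ is doubly stochastic, so $R_L$ is doubly stochastic. Hence $uR_L=u$, and $\mu J=u$ for any probability vector $\mu$. It follows that
\[
\mu R_L-u=\mu(R_L-J)=(\mu-u)(R_L-J).
\]
The second equality holds because $u(R_L-J)=u-u=0$. Writing $v=\mu-u$, we obtain
\[
\E\chi^2(\mu R_L\|u)=d\,\E\bigl[v(R_L-J)(R_L-J)^\top v^\top\bigr]=d\prod_t\alpha(P_t)\,v(I-J)v^\top .
\]
Since $v\one=0$, we have $vJ=0$, so $v(I-J)v^\top=\|v\|_2^2$. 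Therefore the right-hand side equals $\chi^2(\mu\|u)\prod_t\alpha(P_t)$, as claimed.

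None of these steps is a real obstacle. The only point requiring care is to use the left (row) covariance $(R_L-J)(R_L-J)^\top$ rather than the right one. This matches the row-vector action $\mu R_L$, and it is exactly the form given in \eqref{eq:left-covariance}. We also need $v\perp\one$ so that the $J$ terms drop out.
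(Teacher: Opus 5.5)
Your proposal is correct and follows the paper's proof essentially step for step. You take the trace of \eqref{eq:left-covariance} for the Frobenius identity. For the $\chi^2$ identity you write $\mu R_L-u=(\mu-u)(R_L-J)$ and sandwich the covariance identity, then use $\chi^2(\cdot\,\|\,u)=d\|\cdot-u\|_2^2$ and $(\mu-u)J=0$.
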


We believe that the exact identities in Theorem~\ref{thm:exact-covariance} and Corollary~\ref{cor:exact-l2} are of independent interest. As a concrete application, we use them in Section~\ref{sec:applications} to establish quantitative convergence to consensus for
synchronous opinion averaging with heterogeneous room sizes.

Using the standard bound
\begin{equation}\label{eq:standard_divergence}
\|\mu-\nu\|_{\TV}\leq \frac12\sqrt{\chi^2(\mu\|\nu)},
\end{equation}
together with Jensen's inequality, \eqref{eq:exact-chi-square-general} yields
\[
\E\bigl\|\mu R_L-u\bigr\|_{\TV}
\leq
\frac12\sqrt{\chi^2(\mu\|u)}
\left(\prod_{t=1}^L\alpha(P_t)\right)^{1/2}.
\]
Consequently, if $\chi^2(\mu\|u)\prod_{t=1}^L\alpha(P_t)\to 0$, then $\mu R_L$ converges to $u$ in total variation in $L^1$. For an $r$-regular kernel,
\[
\alpha(P)
=
\frac{d-r}{r(d-1)}
\leq \frac1r.
\]
Consequently, \eqref{eq:exact-chi-square-general} gives
\[
\E\chi^2\bigl((R_L)_i\|u\bigr)
\leq
\frac{d-1}{B_L}
\leq
\frac{1}{\lambda_L}.
\]
Together with (\ref{eq:standard_divergence}), this yields the postcritical estimate in Corollary~\ref{thm:tv-profile}.

\subsection{Applications}\label{subsec:applications}

We illustrate our results with a synchronous model of opinion averaging. Consider a
population of $d$ agents, each holding a nonnegative real-valued opinion, and fix an
integer $b\geq2$ dividing $d$. At each round, the agents are divided
uniformly at random into groups of size $b$ and sent to separate rooms.
The agents in each room talk to one another and, at the end of the round,
all replace their opinions by the average opinion in that room. The
agents then leave the rooms, are reshuffled independently into new
groups of size $b$, and the procedure is repeated. See Figure \ref{fig:random-rooms} for an illustration.

Since every update preserves the average opinion of the population, the
natural limiting state is consensus at the initial population average.
It is therefore natural to ask how quickly the system approaches
consensus as the agents repeatedly meet and reshuffle.

Up to a final permutation of the agents, this dynamics is obtained from the model (\ref{eq:regular-product}) by setting $P_1=\ldots=P_L=P$, where $P$ is the transition matrix of the graph $G$ consisting of $d/b$ disconnected components, each a complete graph of size $b$, with self-loops. The vertices represent the agents and each connected component represents one room. The action of $P$ replaces the value at each vertex by the average over its connected component. Finally, randomly assigning the agents to rooms at each round corresponds to applying independent uniform permutations to the vertices. In particular, all layers have degree $b$, so
\[
B_L=b^L,
\qquad
S_L=\frac{bL}{d},
\qquad
\lambda_L=\frac{b^L}{d}.
\]
The path multiplicities also have a direct interpretation. If we follow
the initial influence of an agent $i$, after $L$ rounds it has been split
into $b^L$ equal pieces, and $N_L(i,j)$ is the number of these pieces
received by agent $j$. Equivalently, up to a final permutation of the agents, the $i$-th row of the product $R_L$ defined in (\ref{eq:regular-product}), whose entries are given by
\[
(R_L)_{ij}=\frac{N_L(i,j)}{b^L},
\]
corresponds to the influence profile of agent $i$ across the population. The empirical measure $\nu_L^{(i)}$ describes the distribution of the influence of agent $i$. More precisely, $\nu_L^{(i)}(\{k\})$ is the proportion of agents receiving exactly $k$ of these pieces. 

Our general results give a particularly simple description of this dynamics. They determine both how the influence of a single agent is distributed across the population and how the system approaches consensus. We record these consequences without the quantitative error bounds, which follow directly from our general results stated above.

Normalize the initial opinion profile so that it is a probability measure $\mu$ on $[d]$. Consensus corresponds to the uniform measure $u$ on $[d]$. We measure worst-case convergence to consensus by
\begin{equation}\label{eq:consensus}
\mathcal C_d(L)
=
\sup_{\mu\in\mathcal P([d])}
\E\|\mu R_L-u\|_{\TV},
\end{equation}
where the supremum is over all probability vectors $\mu$ on $[d]$. Recall that $\Psi(\lambda)=\frac{1}{2\lambda} \E |Z_\lambda - \lambda|$, where $Z_\lambda \sim \Pois(\lambda)$.

\begin{theorem}\label{thm:synchronous-averaging}
	Let $b=b(d)$ divide $d$, and suppose that, as $d \to \infty$,
	\[
	\frac{bL}{d}\longrightarrow0,
	\qquad
	\frac{b^L}{d}\longrightarrow\lambda\in(0,\infty).
	\]
	Then, for every fixed agent $i$,
	\[
	\E W_1\bigl(\nu_L^{(i)},\Pois(\lambda)\bigr)\longrightarrow0,
	\]
	and
	\[
	\mathcal C_d(L)\longrightarrow\Psi(\lambda).
	\]
\end{theorem}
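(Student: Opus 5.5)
The plan is to derive both assertions from the general results, after checking that the averaging dynamics is an instance of \eqref{eq:regular-product}. The room-averaging matrix $P$ is the transition matrix of a disjoint union of $d/b$ complete graphs $K_b$ with self-loops. This graph is $b$-regular, and $P$ is doubly stochastic. So with $P_1=\cdots=P_L=P$ we have $B_L=b^L$, $\lambda_L=b^L/d$ and $S_L=bL/d$. The physical dynamics equals $R_L$ up to a final uniform permutation $\sigma$ of the agents. Right-multiplying by a permutation matrix only relabels the columns, so it changes neither the TV distance to $u$ nor the empirical multiplicity measure $\nu_L^{(i)}$. Hence we may work with $R_L$ directly. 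By hypothesis $S_L\to0$, so $S_L\le 1/4$ for all large $d$, and $\lambda_L\to\lambda\in(0,\infty)$.

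For the Wasserstein statement, apply \Cref{cor:wasserstein-poisson}. Its right-hand side is $14\lambda_L\sqrt{1+\lambda_L}\sqrt{S_L\log(1/S_L)/\log\log(1/S_L)}$. Here $\lambda_L$ stays bounded, and $S_L\log(1/S_L)\to0$, so this tends to $0$. It remains to replace $\Pois(\lambda_L)$ by $\Pois(\lambda)$. Coupling $Z_{\lambda_L}$ and $Z_\lambda$ as nested Poisson variables (one is the other plus an independent Poisson with parameter $|\lambda_L-\lambda|$) gives
\[
W_1\bigl(\Pois(\lambda_L),\Pois(\lambda)\bigr)\le|\lambda_L-\lambda|\to0 .
\]
The triangle inequality for $W_1$ then yields $\E W_1(\nu_L^{(i)},\Pois(\lambda))\to0$.

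For the consensus statement, we bound $\mathcal C_d(L)$ from both sides.

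\emph{Lower bound.} Take $\mu=\delta_i$, so that $\mu R_L=(R_L)_i$. Corollary~\ref{thm:tv-profile}(ii) gives
\[
\bigl|\E\|(R_L)_i-u\|_{\TV}-\Psi(\lambda_L)\bigr|\le\sqrt{18}\,\lambda_L\sqrt{S_L}\to0 .
\]

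\emph{Upper bound.} For a general probability vector $\mu$, write $\mu R_L-u=\sum_i\mu_i\bigl((R_L)_i-u\bigr)$. Convexity of the norm gives $\|\mu R_L-u\|_{\TV}\le\sum_i\mu_i\|(R_L)_i-u\|_{\TV}$. Taking expectations,
\[
\E\|\mu R_L-u\|_{\TV}\le\max_i\E\|(R_L)_i-u\|_{\TV}\le\Psi(\lambda_L)+\sqrt{18}\,\lambda_L\sqrt{S_L}.
\]
This uses that the bound in Corollary~\ref{thm:tv-profile}(ii) does not depend on $i$, so the supremum over $\mu$ costs nothing.

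\emph{Passing to the limit.} Both bounds reduce the claim $\mathcal C_d(L)\to\Psi(\lambda)$ to continuity of $\Psi$ at $\lambda>0$. This holds because $\Psi(\lambda)=\E|Z_\lambda-\lambda|/(2\lambda)$. The numerator is continuous, since by the same coupling $\bigl|\E|Z_{\lambda'}-\lambda'|-\E|Z_\lambda-\lambda|\bigr|\le2|\lambda'-\lambda|$. The denominator is bounded away from $0$ near $\lambda$.

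The only step requiring care is the reduction from an arbitrary initial profile $\mu$ to point masses. It goes through cleanly because the rate in Corollary~\ref{thm:tv-profile}(ii) is uniform in the starting vertex $i$, so I expect no real obstacle there.
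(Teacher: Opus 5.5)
Your proposal is correct and follows the paper's proof. You identify the room dynamics as the regular product with $P_t\equiv P$ being $b$-regular, so that $B_L=b^L$, $S_L=bL/d$ and $\lambda_L=b^L/d$; you invoke Theorem~\ref{cor:wasserstein-poisson} for the Wasserstein claim; and you reduce $\mathcal C_d(L)$ to $\max_i\E\|(R_L)_i-u\|_{\TV}$ by convexity before applying the $i$-uniform bound of Corollary~\ref{thm:tv-profile}(ii), with your explicit $\mu=\delta_i$ lower bound and continuity argument for $\Psi$ simply spelling out details the paper leaves implicit.
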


For fixed room size, the preceding theorem gives an explicit cutoff profile. To avoid irrelevant arithmetic effects, we restrict to $d=b^n$ for some $n\in \mathbb{N}$.

\begin{corollary}\label{cor:synchronous-cutoff}
	Fix $b\geq2$ and let $d=b^n$. Then, for every fixed
	$c\in\mathbb Z$,
	\[
	\mathcal C_{d}(n+c)
	\longrightarrow
	\Psi(b^c)
	\qquad\text{as }n\to\infty.
	\]
	Consequently, the synchronous opinion averaging process exhibits cutoff for
	convergence to consensus at $\log_b d$, with a window of order one and
	discrete cutoff profile $c\mapsto\Psi(b^c)$.
\end{corollary}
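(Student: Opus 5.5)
The plan is to obtain the corollary as a direct specialization of Theorem~\ref{thm:synchronous-averaging}, with the parameters $d=b^n$ and $L=L(n)=n+c$. Fix $b\geq2$ and $c\in\mathbb Z$. For $n>|c|$ we have $L\geq1$, and $b$ divides $d=b^n$. The two hypotheses of Theorem~\ref{thm:synchronous-averaging} then reduce to elementary computations:
\[
\frac{bL}{d}=\frac{b(n+c)}{b^n}\longrightarrow0,
\qquad
\frac{b^L}{d}=\frac{b^{n+c}}{b^n}=b^c=:\lambda\in(0,\infty).
\]
The second quantity is in fact constant in $n$. Theorem~\ref{thm:synchronous-averaging} therefore gives $\mathcal C_d(n+c)\to\Psi(b^c)$ directly. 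The only point to check is that $\mathcal C_d$ is the quantity defined in \eqref{eq:consensus}, namely the supremum over initial profiles $\mu$ of $\E\|\mu R_L-u\|_{\TV}$.

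The actual opinion dynamics agrees with $\mu R_L$ only up to a final permutation of the agents. Since $u$ is invariant under permutations and total variation is permutation invariant, this discrepancy does not change $\|\mu R_L-u\|_{\TV}$. The identification of $\mathcal C_d$ with the consensus distance of the averaging process is therefore exact.

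For the cutoff interpretation, I would verify that the profile $c\mapsto\Psi(b^c)$ interpolates between $1$ and $0$. Using the explicit formula $\Psi(\lambda)=e^{-\lambda}\lambda^{\lfloor\lambda\rfloor}/\lfloor\lambda\rfloor!$, we get $\Psi(\lambda)\to1$ as $\lambda\downarrow0$, which corresponds to $c\to-\infty$. By Stirling, $\Psi(\lambda)\sim(2\pi\lambda)^{-1/2}\to0$ as $\lambda\to\infty$, which corresponds to $c\to+\infty$. Hence
\[
\lim_{c\to-\infty}\lim_{n\to\infty}\mathcal C_d(n+c)=1,
\qquad
\lim_{c\to+\infty}\lim_{n\to\infty}\mathcal C_d(n+c)=0.
\]
This is precisely cutoff at time $n=\log_b d$ with an $O(1)$ window.

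If one wants monotonicity or the bounds to be robust beyond the limits, the following quantitative estimates can be used instead:
\begin{itemize}
\item parts (i) and (iii) of Corollary~\ref{thm:tv-profile} give $\mathcal C_d(L)\geq 1-\lambda_L$ when $\lambda_L\leq1$;
\item the same corollary gives $\mathcal C_d(L)\leq \tfrac12\sqrt{\chi^2}$-type decay;
\item the exact $\chi^2$ formula in Corollary~\ref{cor:exact-l2} handles the sup over $\mu$ beyond the critical window.
\end{itemize}
The identity $\alpha(P)=\frac{d-b}{b(d-1)}\leq 1/b$ makes that supremum explicit, since $\chi^2(\mu\|u)\leq d-1$.

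I do not expect a real obstacle here. The only delicate point is the supremum over $\mu$ in $\mathcal C_d$, but this is already absorbed into Theorem~\ref{thm:synchronous-averaging}.
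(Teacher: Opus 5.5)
Your proposal is correct and follows the paper's proof. You specialize Theorem~\ref{thm:synchronous-averaging} with $L=n+c$, note that $\lambda_L=b^c$ and $S_L=b(n+c)/b^n\to0$, and read off the cutoff from $\Psi(b^c)\to1$ as $c\to-\infty$ and $\Psi(b^c)\to0$ as $c\to+\infty$. The optional bullet list is not needed; note also that the lower bound $1-\lambda_L$ comes from part (i) alone, since part (iii) is the upper bound.
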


Finally, we consider a heterogeneous version of this model that allows the rooms to have different sizes. At round $t$, fix
integers
\[
b_{t,1},\ldots,b_{t,q_t}\geq1,
\qquad
\sum_{a=1}^{q_t} b_{t,a}=d,
\]
and partition the agents uniformly at random into $q_t$ rooms of these
sizes. As before, all agents in a room simultaneously replace their
opinions by the average opinion in that room. See Figure \ref{fig:random-rooms} for an illustration.

This model also fits into our framework. At round $t$, let $P_t$ be the transition matrix of the graph
consisting of $q_t$ disconnected complete components of sizes
$b_{t,1},\ldots,b_{t,q_t}$, with self-loops. Each component represents
one room, and the action of $P_t$ replaces the opinion of every agent by
the average opinion in its room. Randomly assigning the agents to these
rooms corresponds, as before, to applying a uniform random permutation
to the vertices.

When the room sizes are not all equal, $P_t$ is no longer regular. Nevertheless, $P_t$ is doubly stochastic, so the exact $L^2$ identities of \cref{thm:exact-covariance,cor:exact-l2}
still apply. As we show in Section~\ref{sec:applications}, they yield an exact formula for the expected $\chi^2$-divergence from consensus which, surprisingly, depends only on the number $q_t$ of rooms at each round and not on their individual sizes. In particular, they provide a quantitative bound on convergence to consensus in total variation.

\begin{theorem}\label{thm:heterogeneous-averaging}
	Consider the synchronous opinion averaging process in which, at round
	$t$, the agents are divided into $q_t$ rooms of arbitrary sizes. Then
	\[
	\mathcal C_d(L)
	\leq
	\frac12
	\sqrt{
		d\prod_{t=1}^L\frac{q_t}{d}
	}.
	\]
\end{theorem}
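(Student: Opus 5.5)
We will derive the bound from Corollary~\ref{cor:exact-l2}. The only new input is the computation of $\alpha(P_t)$ for the room-averaging kernel.

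\textbf{Computing $\alpha(P_t)$.} At round $t$, $P_t$ is block diagonal. Its $a$-th block is $\frac{1}{b_{t,a}}\one\one^\top$ of size $b_{t,a}$. Each block is a rank-one orthogonal projection, so $P_t$ is the orthogonal projection onto the span of the $q_t$ room indicator vectors. This span contains $\one$, hence $P_tJ=JP_t=J$. Since $P_t$ is symmetric and idempotent, we get
\[
\|Q_t\|_F^2=\tr\bigl((P_t-J)^2\bigr)=\tr(P_t)-\tr(J)=q_t-1.
\]
(One can also check this directly: $\|P_t\|_F^2=\sum_a b_{t,a}^2\cdot b_{t,a}^{-2}=q_t$ and $\langle P_t,J\rangle_F=1$.) Therefore
\[
\alpha(P_t)=\frac{q_t-1}{d-1}\leq\frac{q_t}{d},
\]
because $q_t\leq d$. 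This is why the answer depends only on the number of rooms and not on their sizes.

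\textbf{Relating the process to $R_L$.} The actual dynamics differs from $R_L$ in \eqref{eq:regular-product} at most by one final uniform permutation, which relabels the agents. Moreover, $\|\mu R\Pi-u\|_{\TV}=\|\mu R-u\|_{\TV}$ for any permutation matrix $\Pi$, since $u\Pi=u$. So it suffices to bound $\E\|\mu R_L-u\|_{\TV}$ for $R_L$ itself.

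\textbf{Bounding the distance to consensus.} Apply \eqref{eq:standard_divergence}, Jensen's inequality and \eqref{eq:exact-chi-square-general}:
\[
\E\|\mu R_L-u\|_{\TV}\leq\frac12\sqrt{\chi^2(\mu\|u)\prod_{t=1}^L\alpha(P_t)}.
\]
It remains to take the supremum over $\mu$. We have
\[
\chi^2(\mu\|u)=d\sum_j\mu_j^2-1\leq d-1\leq d,
\]
with the maximum attained at point masses. Combining this with $\alpha(P_t)\leq q_t/d$ gives
\[
\mathcal C_d(L)\leq\frac12\sqrt{d\prod_{t=1}^L\frac{q_t}{d}},
\]
which is the claimed bound. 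In fact the argument yields the slightly sharper constant $(d-1)\prod_{t}(q_t-1)/(d-1)$ inside the square root.

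\textbf{Main obstacle.} The only nontrivial step is identifying $\|Q_t\|_F^2=q_t-1$ through the projection structure of $P_t$; everything else is a direct substitution into Corollary~\ref{cor:exact-l2}. A secondary point to check is the bookkeeping of the final permutation and of the normalization of the opinion profile, and the TV invariance argument handles it.
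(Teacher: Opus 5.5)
Your proof is correct and follows essentially the same route as the paper: you compute $\alpha(P_t)=(q_t-1)/(d-1)\leq q_t/d$ from the orthogonal-projection structure of the room kernel, then combine the exact identity of Corollary~\ref{cor:exact-l2} with $\chi^2(\mu\|u)\leq d$, Jensen's inequality and \eqref{eq:standard_divergence}. Your explicit handling of the final relabelling permutation (which the paper spells out only in the homogeneous case) and the sharper quantity $(d-1)\prod_t\frac{q_t-1}{d-1}$ under the square root are both valid refinements.
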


\begin{figure}[t]
	\centering
	\begin{tikzpicture}[
		>=Latex,
		every node/.style={font=\small},
		agent/.style={
			circle,
			draw,
			fill=white,
			minimum size=5pt,
			inner sep=0pt
		},
		room/.style={
			draw,
			rounded corners=4pt,
			fill=gray!5
		},
		connection/.style={
			gray!45,
			line width=.35pt
		},
		flow/.style={
			->,
			line width=.7pt
		}
		]
		
		
		\begin{scope}[xshift=0cm]
			
			\node[font=\small\bfseries] at (3.0,3.55)
			{Homogeneous rooms};
			
			\foreach \x in {0.75,1.65,2.55,3.45,4.35,5.25} {
				\node[agent] at (\x,2.95) {};
				\node[agent] at (\x,2.60) {};
			}

			\draw[flow] (3.0,2.10) -- (3.0,1.55);
			
			\node[font=\scriptsize,anchor=west] at (3.08,1.83)
			{uniform reshuffling};
			
			
			\node[room,minimum width=1.55cm,minimum height=1.20cm]
			at (1.0,0.55) {};
			
			\node[agent] (L11) at (0.72,0.82) {};
			\node[agent] (L12) at (1.28,0.82) {};
			\node[agent] (L13) at (0.72,0.28) {};
			\node[agent] (L14) at (1.28,0.28) {};
			
			\draw[connection] (L11)--(L12);
			\draw[connection] (L11)--(L13);
			\draw[connection] (L11)--(L14);
			\draw[connection] (L12)--(L13);
			\draw[connection] (L12)--(L14);
			\draw[connection] (L13)--(L14);
			
			\draw[connection]
			(L11) .. controls +(120:.30) and +(60:.30) .. (L11);
			\draw[connection]
			(L12) .. controls +(120:.30) and +(60:.30) .. (L12);
			\draw[connection]
			(L13) .. controls +(240:.30) and +(300:.30) .. (L13);
			\draw[connection]
			(L14) .. controls +(240:.30) and +(300:.30) .. (L14);
			
			
			\node[room,minimum width=1.55cm,minimum height=1.20cm]
			at (3.0,0.55) {};
			
			\node[agent] (L21) at (2.72,0.82) {};
			\node[agent] (L22) at (3.28,0.82) {};
			\node[agent] (L23) at (2.72,0.28) {};
			\node[agent] (L24) at (3.28,0.28) {};
			
			\draw[connection] (L21)--(L22);
			\draw[connection] (L21)--(L23);
			\draw[connection] (L21)--(L24);
			\draw[connection] (L22)--(L23);
			\draw[connection] (L22)--(L24);
			\draw[connection] (L23)--(L24);
			
			\draw[connection]
			(L21) .. controls +(120:.30) and +(60:.30) .. (L21);
			\draw[connection]
			(L22) .. controls +(120:.30) and +(60:.30) .. (L22);
			\draw[connection]
			(L23) .. controls +(240:.30) and +(300:.30) .. (L23);
			\draw[connection]
			(L24) .. controls +(240:.30) and +(300:.30) .. (L24);
			
			
			\node[room,minimum width=1.55cm,minimum height=1.20cm]
			at (5.0,0.55) {};
			
			\node[agent] (L31) at (4.72,0.82) {};
			\node[agent] (L32) at (5.28,0.82) {};
			\node[agent] (L33) at (4.72,0.28) {};
			\node[agent] (L34) at (5.28,0.28) {};
			
			\draw[connection] (L31)--(L32);
			\draw[connection] (L31)--(L33);
			\draw[connection] (L31)--(L34);
			\draw[connection] (L32)--(L33);
			\draw[connection] (L32)--(L34);
			\draw[connection] (L33)--(L34);
			
			\draw[connection]
			(L31) .. controls +(120:.30) and +(60:.30) .. (L31);
			\draw[connection]
			(L32) .. controls +(120:.30) and +(60:.30) .. (L32);
			\draw[connection]
			(L33) .. controls +(240:.30) and +(300:.30) .. (L33);
			\draw[connection]
			(L34) .. controls +(240:.30) and +(300:.30) .. (L34);
			
			\node[font=\scriptsize] at (1.0,-0.2) {$4$};
			\node[font=\scriptsize] at (3.0,-0.2) {$4$};
			\node[font=\scriptsize] at (5.0,-0.2) {$4$};
			
		\end{scope}

		
		\draw[gray!35] (6.25,-0.45) -- (6.25,3.75);

		
		\begin{scope}[xshift=6.65cm]
			
			\node[font=\small\bfseries] at (3.25,3.55)
			{Heterogeneous rooms};
			
			\foreach \x in {0.70,1.70,2.70,3.70,4.70,5.70} {
				\node[agent] at (\x,2.95) {};
				\node[agent] at (\x,2.60) {};
			}

			\draw[flow] (3.20,2.10) -- (3.20,1.55);
			
			\node[font=\scriptsize,anchor=west] at (3.28,1.83)
			{uniform reshuffling};
			
			
			\node[room,minimum width=1.10cm,minimum height=1.05cm]
			at (0.65,0.55) {};
			
			\node[agent] (R11) at (0.45,0.55) {};
			\node[agent] (R12) at (0.85,0.55) {};
			
			\draw[connection] (R11)--(R12);
			
			\draw[connection]
			(R11) .. controls +(150:.30) and +(210:.30) .. (R11);
			\draw[connection]
			(R12) .. controls +(30:.30) and +(-30:.30) .. (R12);
			
			\node[font=\scriptsize] at (0.65,-0.15) {$2$};
			
			
			\node[room,minimum width=1.40cm,minimum height=1.15cm]
			at (2.20,0.55) {};
			
			\node[agent] (R21) at (2.20,0.90) {};
			\node[agent] (R22) at (1.88,0.28) {};
			\node[agent] (R23) at (2.52,0.28) {};
			
			\draw[connection] (R21)--(R22);
			\draw[connection] (R21)--(R23);
			\draw[connection] (R22)--(R23);
			
			\draw[connection]
			(R21) .. controls +(120:.30) and +(60:.30) .. (R21);
			\draw[connection]
			(R22) .. controls +(180:.30) and +(240:.30) .. (R22);
			\draw[connection]
			(R23) .. controls +(0:.30) and +(-60:.30) .. (R23);
			
			\node[font=\scriptsize] at (2.20,-0.20) {$3$};
			
			
			\node[room,minimum width=3.10cm,minimum height=1.45cm]
			at (4.95,0.55) {};
			
			\node[agent] (R31) at (4.95,1.05) {};
			\node[agent] (R32) at (5.45,0.88) {};
			\node[agent] (R33) at (5.62,0.43) {};
			\node[agent] (R34) at (5.28,0.12) {};
			\node[agent] (R35) at (4.62,0.12) {};
			\node[agent] (R36) at (4.28,0.43) {};
			\node[agent] (R37) at (4.45,0.88) {};
			
			\draw[connection] (R31)--(R32);
			\draw[connection] (R31)--(R33);
			\draw[connection] (R31)--(R34);
			\draw[connection] (R31)--(R35);
			\draw[connection] (R31)--(R36);
			\draw[connection] (R31)--(R37);
			
			\draw[connection] (R32)--(R33);
			\draw[connection] (R32)--(R34);
			\draw[connection] (R32)--(R35);
			\draw[connection] (R32)--(R36);
			\draw[connection] (R32)--(R37);
			
			\draw[connection] (R33)--(R34);
			\draw[connection] (R33)--(R35);
			\draw[connection] (R33)--(R36);
			\draw[connection] (R33)--(R37);
			
			\draw[connection] (R34)--(R35);
			\draw[connection] (R34)--(R36);
			\draw[connection] (R34)--(R37);
			
			\draw[connection] (R35)--(R36);
			\draw[connection] (R35)--(R37);
			
			\draw[connection] (R36)--(R37);
			
			\draw[connection]
			(R31) .. controls +(120:.27) and +(60:.27) .. (R31);
			\draw[connection]
			(R32) .. controls +(60:.27) and +(0:.27) .. (R32);
			\draw[connection]
			(R33) .. controls +(30:.27) and +(-30:.27) .. (R33);
			\draw[connection]
			(R34) .. controls +(-60:.27) and +(-120:.27) .. (R34);
			\draw[connection]
			(R35) .. controls +(-120:.27) and +(-60:.27) .. (R35);
			\draw[connection]
			(R36) .. controls +(210:.27) and +(150:.27) .. (R36);
			\draw[connection]
			(R37) .. controls +(150:.27) and +(90:.27) .. (R37);
			
			\node[font=\scriptsize] at (4.95,-0.35) {$7$};
			
		\end{scope}
		
	\end{tikzpicture}
	
	\caption{Synchronous opinion averaging process with $12$ agents, with homogeneous and heterogeneous room sizes.}
	\label{fig:random-rooms}
\end{figure}

\subsection{Relation to previous work}

Poisson approximation in classical occupancy and allocation models is
well established; see, for example, the monograph \cite{BarbourHolstJanson1992}. For sums of dependent indicators, Arratia, Goldstein, and Gordon
\cite{ArratiaGoldsteinGordon1989} developed quantitative Chen--Stein
bounds in terms of local dependence parameters. Multivariate Poisson
approximation for multinomial and related allocation schemes was studied
in \cite{McDonald1980,Roos1999}. Poisson approximations for graph path multiplicities also
appear in the work of Lubetzky and Sly \cite{LubetzkySly2010} on non-backtracking random walks on random regular graphs. Their analysis exploits the typical geometry of random regular graphs, whereas our results apply to arbitrary deterministic regular layers, with independent uniform permutations resampled at every step.

Averaging processes are dynamics in which agents repeatedly replace their
values by local averages over randomly chosen interaction sets. They have attracted substantial attention in connection with interacting particle systems and opinion dynamics; see, for example, \cite{BoydGhoshPrabhakarShah2006,AldousLanoue2012} and the references
therein. In the pairwise case, Aldous and Lanoue \cite{AldousLanoue2012} related the
dynamics to an associated reversible Markov chain and obtained quantitative
convergence estimates in terms of its spectral gap. More recently,
Chatterjee, Diaconis, Sly, and Zhang \cite{ChatterjeeDiaconisSlyZhang2022} obtained a sharp phase transition for repeated pairwise averaging on the complete graph, and Movassagh,
Szegedy, and Wang \cite{MovassaghSzegedyWang2024} studied the process on
general graphs; see also
\cite{CaputoQuattropaniSau2023,CaputoQuattropaniSau2026} for further
mixing results on specific geometries. 

Particularly relevant is the work of Caputo, Quattropani, and Sau
\cite{CaputoQuattropaniSau2024} on repeated block averages. Their
dynamics is sequential: at each step a single random block of coordinates
is averaged. They obtain sharp criteria for cutoff, identify the cutoff
window and profile, and, for certain polynomially growing sublinear block sizes,
describe a regime with a Poissonian total-variation profile, described in terms of arrival times of a Poisson process. By contrast, in the synchronous model introduced above, all blocks of a random partition are averaged simultaneously. Under our sparsity assumption, we establish a Poisson law for the empirical path multiplicity distribution at the critical scale where the number of paths is comparable to the size of the system. This, in turn, yields an explicit total-variation profile.

A second closely related direction concerns the use of permutations to
accelerate Markov chains. Chatterjee and Diaconis
\cite{ChatterjeeDiaconis2020} showed that inserting suitable
deterministic permutations can substantially accelerate mixing.
Ben-Hamou and Peres \cite{BenHamouPeres2023} subsequently considered a
uniformly random permutation and proved cutoff at an entropic time under
general assumptions; related mixtures of permuted kernels were studied
by Dubail \cite{Dubail2024Reversible,Dubail2024General}. In these works
the random permutation is sampled and then reused throughout the
evolution. Dynamic random permutations were studied more recently by
Avena, van der Hofstad, den Hollander, and Nagy
\cite{AvenaVanDerHofstadDenHollanderNagy2026}, where the environment
itself evolves through random transpositions. In our model, the permutation is resampled independently and uniformly at each layer. This prevents geometric correlations from persisting across layers, while the paths within a given layer remain strongly dependent.

Finally, products of stochastic matrices are a standard framework for
studying consensus and time-inhomogeneous Markov chains; see, for example,
\cite{TouriNedic2011,TouriNedic2014,BajovicXavierMouraSinopoli2013}.
Touri and Nedi\'c developed general ergodicity and convergence criteria
for random stochastic matrix products, while Bajovi\'c, Xavier, Moura,
and Sinopoli identified exact asymptotic rates of convergence for certain
i.i.d.\ models. In our setting, for arbitrary deterministic doubly
stochastic layers, the centered covariance can be computed explicitly at
every finite time. This yields exact Frobenius and $\chi^2$ divergence contraction
identities.

\section{Notation and setup}\label{sec:two}

For $d\geq 2$ we write $[d]=\{1,\ldots,d\}$ and let $S_d$ denote the symmetric group on $[d]$. The open unit disk is denoted by
\[
\mathbb D=\{z\in\mathbb C:|z|<1\},
\]
and $\overline{\mathbb{D}}$ denotes its closure.
We regard probability distributions on $[d]$ as row vectors and let
Markov kernels act on the right. Let $e_1,\ldots,e_d$ denote the
standard basis vectors of $\mathbb R^d$, let $\one=(1,\ldots,1)^\mathsf{T}$, and write
\[
u=\frac{1}{d}\one^\mathsf{T},
\qquad
J=\frac{1}{d}\one\one^\mathsf{T}.
\]

For a matrix $A$, we denote its transpose by $A^\mathsf{T}$ and its
conjugate transpose by $A^*$. We write $\tr(A)$ for its trace. For
$A,B\in\mathbb C^{d\times d}$, the Frobenius inner product and norm are
\[
\langle A,B\rangle_F=\tr(A^*B),
\qquad
\|A\|_F^2=\langle A,A\rangle_F.
\]
For vectors in $\mathbb C^d$, $\langle x,y\rangle=x^*y$ denotes the
standard Hermitian inner product.

A Markov kernel on $[d]$ is a matrix
$P\in\mathbb R^{d\times d}$ with nonnegative entries and row sums equal
to one. It is doubly stochastic if its column sums are also equal to
one. A doubly stochastic kernel $P$ is called $r$-regular if
\[
P_{ij}\in\{0,1/r\}
\qquad
\text{for every }i,j\in[d].
\]
Equivalently, every row and every column contains exactly $r$ nonzero
entries, all equal to $1/r$. Such a kernel can be identified with the
transition matrix of the simple random walk on an $r$-regular directed
graph on $[d]$, allowing self-loops.

For $\pi\in S_d$, let $\Pi_\pi \in \mathbb{R}^{d\times d}$ denote the permutation matrix whose action on a row vector $x$ is
\[
(x\Pi_\pi)(j)=x(\pi(j)).
\]
Whenever $\Pi_t$ is a permutation matrix, we write
$\Pi_t=\Pi_{\pi_t}$ for the corresponding permutation $\pi_t\in S_d$.

For a probability measure $\mu$, we write $\Pp_\mu$ and $\E_\mu$ for
probability and expectation with respect to $\mu$. For random variables $X_1,\ldots,X_n$, we write
$\sigma(X_1,\ldots,X_n)$ for the sigma-algebra they generate. We write $\Pois(\lambda)$ for the Poisson distribution with mean
$\lambda$, and $\Bin(n,p)$ for the binomial distribution with parameters
$n$ and $p$.

For probability vectors $\mu,\nu$ on $[d]$, their total-variation
distance is
\[
\|\mu-\nu\|_{\TV}
=
\frac12\sum_{j=1}^d|\mu_j-\nu_j|.
\]
If $\nu_j>0$ for every $j$, their $\chi^2$-divergence is
\[
\chi^2(\mu\|\nu)
=
\sum_{j=1}^d\frac{(\mu_j-\nu_j)^2}{\nu_j}.
\]
For probability measures $\mu,\nu$ on a metric space $(E,d)$
with finite first moments, we define their $1$-Wasserstein distance by
\[
W_1(\mu,\nu)
=
\sup_{\|f\|_{\mathrm{Lip}}\leq1}
\left|
\int_E f\,\dd\mu-\int_E f\,\dd\nu
\right|,
\]
where the supremum is taken over all $1$-Lipschitz functions
$f:E\to\mathbb R$. For probability measures on $\mathbb R$, Vallender \cite{Vallender1974}
gave an equivalent characterization in terms of their distribution
functions. In particular, if $\mu$ and $\nu$ are supported on
$\mathbb N_0=\{0,1,\ldots\}$, then
\begin{equation}\label{eq:wasserstein-discrete}
	W_1(\mu,\nu)
	=
	\sum_{m=0}^\infty
	\left|F_\mu(m)-F_\nu(m)\right|,
\end{equation}
where $F_\mu$ and $F_\nu$ denote the cumulative distribution function of $\mu$ and $\nu$, respectively.

We finish by recording an elementary estimate that will be used
repeatedly.

\begin{lemma}\label{lem:product-telescoping}
	Let $x_1,\ldots,x_r,y_1,\ldots,y_r\in\overline{\mathbb D}$. Then
	\[
	\left|
	\prod_{s=1}^r x_s-\prod_{s=1}^r y_s
	\right|
	\leq
	\sum_{s=1}^r|x_s-y_s|.
	\]
\end{lemma}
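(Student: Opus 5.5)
The plan is to prove the estimate by induction on $r$, peeling off one factor at a time. The engine is the algebraic identity
\[
\prod_{s=1}^r x_s-\prod_{s=1}^r y_s
=
\Bigl(\prod_{s=1}^{r-1}x_s\Bigr)(x_r-y_r)
+
\Bigl(\prod_{s=1}^{r-1}x_s-\prod_{s=1}^{r-1}y_s\Bigr)y_r .
\]
The base case $r=1$ is trivial, since then both sides equal $|x_1-y_1|$.

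For the inductive step, take absolute values in the identity and apply the triangle inequality. Every $x_s,y_s$ lies in $\overline{\mathbb D}$, so $|y_r|\leq 1$ and $\bigl|\prod_{s<r}x_s\bigr|\leq 1$, because a product of complex numbers of modulus at most one again has modulus at most one. This gives
\[
\Bigl|\prod_{s=1}^r x_s-\prod_{s=1}^r y_s\Bigr|
\leq
|x_r-y_r|
+
\Bigl|\prod_{s=1}^{r-1}x_s-\prod_{s=1}^{r-1}y_s\Bigr|.
\]
The induction hypothesis bounds the second term by $\sum_{s<r}|x_s-y_s|$, which closes the induction.

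Equivalently, one can write the difference as the telescoping sum $\sum_{s=1}^r x_1\cdots x_{s-1}(x_s-y_s)y_{s+1}\cdots y_r$ and bound each summand directly. There is no real obstacle here; the only point to check is that the modulus bound $|\cdot|\leq1$ is applied to the products of the $x$'s and $y$'s, not to the differences $x_s-y_s$.
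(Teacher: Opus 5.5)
Your proof is correct and is essentially the paper's argument: the paper writes the difference directly as the telescoping sum $\sum_{t=1}^r (x_t-y_t)\prod_{s<t}x_s\prod_{s>t}y_s$ and applies the triangle inequality with all factors of modulus at most one. Your induction is that same identity unrolled one factor at a time, and you also note the direct telescoping form yourself.
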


\begin{proof}
	Using the telescoping identity
	\[
	\prod_{s=1}^r x_s-\prod_{s=1}^r y_s
	=
	\sum_{t=1}^r
	(x_t-y_t)
	\prod_{s<t}x_s
	\prod_{s>t}y_s,
	\]
	the result follows from the triangle inequality and the fact that all
	factors have modulus at most one.
\end{proof}

\section{Path multiplicities and one-step estimates}\label{sec:one-step}

The technical core of our main results is the probability generating function estimate in Theorem~\ref{thm:complex-pgf}. Its proof proceeds in two stages. First, we quantify the error introduced by a single randomly permuted layer when compared with the idealized with-replacement evolution. We then analyze how these one-step errors accumulate across successive layers. The purpose of this section is to carry out the first stage.

\subsection{Path multiplicities and the empirical generating function}

For $t=1,\ldots,L$, let $P_t \in \mathbb{R}^{d\times d}$ be an $r_t$-regular kernel and let $\Pi_t \in \mathbb{R}^{d\times d}$ be independent uniformly random permutation matrices. Consider the product
\[
R_t=\Pi_1P_1\cdots\Pi_tP_t,
\qquad
R_0=I.
\]
For $i\in[d]$, let $(R_t)_i=e_i^\mathsf{T}R_t$ denote the $i$-th row of $R_t$. We also set
\[
B_t=\prod_{s=1}^t r_s,
\qquad
B_0=1.
\]
We identify an $r$-regular kernel $P$ with the transition matrix of the simple random walk on an $r$-regular directed graph on $[d]$, allowing self-loops. For $i,j\in[d]$, define the incoming and outgoing neighborhoods by
\[
\Gamma_P^-(j)=\{i\in[d]:P_{ij}=1/r\},
\qquad
\Gamma_P^+(i)=\{j\in[d]:P_{ij}=1/r\}.
\]
The $r$-regular hypothesis implies that $|\Gamma_P^-(j)|=|\Gamma_P^+(i)|=r$ for every $i,j\in[d]$. We also let $N_t(i,j)$ denote the number of paths of length $t$ from $i$ to $j$ through the first $t$ layers of the product. Initially,
\[
N_0(i,j)=\ind_{\{i=j\}},
\]
and for $t\geq1$ the multiplicities satisfy
\begin{equation}\label{eq:multiplicity-recursion}
	N_t(i,j)
	=
	\sum_{k\in\Gamma_{P_t}^-(j)}
	N_{t-1}(i,\pi_t(k)).
\end{equation}
Indeed, a path ending at $j$ after the $t$-th layer must use an edge $(k,j)$ of $P_t$ for some $k\in\Gamma_{P_t}^-(j)$. Before applying $P_t$, the permutation $\Pi_t$ places at $k$ the mass that was at $\pi_t(k)$ in the previous layer. Hence the number of paths contributing through $k$ is $N_{t-1}(i,\pi_t(k))$, and summing over the incoming neighbors $k$ gives \eqref{eq:multiplicity-recursion}. See Figure \ref{fig:multiplicity-recursion} for an illustration.

\begin{figure}[b]
	\centering
	\begin{tikzpicture}[
		>=Latex,
		every node/.style={font=\small},
		pile/.style={
			draw,
			rounded corners=2pt,
			minimum width=.85cm,
			minimum height=.62cm,
			fill=white
		},
		perm/.style={
			->,
			dashed,
			gray!65,
			line width=.65pt
		},
		edge/.style={
			->,
			line width=.8pt
		}
		]
		
		
		\node[font=\small\bfseries] at (0,2.55)
		{time $t-1$};
		
		\node[font=\small\bfseries] at (3.7,2.55)
		{after $\pi_t$};
		
		\node[font=\small\bfseries] at (7.2,2.55)
		{time $t$};
		
		
		\node[pile] (A2) at (0,1.45) {$1$};
		\node[pile] (A3) at (0,0.25) {$3$};
		\node[pile] (A1) at (0,-0.95) {$2$};
		
		\node[left=6pt of A2,font=\scriptsize]
		{$\pi_t(k_2)$};
		
		\node[left=6pt of A3,font=\scriptsize]
		{$\pi_t(k_3)$};
		
		\node[left=6pt of A1,font=\scriptsize]
		{$\pi_t(k_1)$};
		
		
		\node[pile] (B1) at (3.7,1.45) {$2$};
		\node[pile] (B2) at (3.7,0.25) {$1$};
		\node[pile] (B3) at (3.7,-0.95) {$3$};
		
		\node[right=6pt of B1,font=\scriptsize]
		{$k_1$};
		
		\node[right=6pt of B2,yshift=5pt,font=\scriptsize]
		{$k_2$};
		
		\node[right=6pt of B3,font=\scriptsize]
		{$k_3$};
		
		\draw[perm] (A1.east) to[out=8,in=188] (B1.west);
		\draw[perm] (A2.east) to[out=-8,in=172] (B2.west);
		\draw[perm] (A3.east) to[out=-4,in=176] (B3.west);
		
		\node[font=\scriptsize,gray!70] at (1.85,2.05)
		{$\pi_t$};
		
		
		\node[pile,line width=.9pt] (J) at (7.2,0.25) {$6$};
		
		\node[right=6pt of J,font=\scriptsize]
		{$j$};
		
		\draw[edge] (B1.east) -- (J.west);
		\draw[edge] (B2.east) -- (J.west);
		\draw[edge] (B3.east) -- (J.west);
		
		\node[font=\scriptsize] at (5.45,2.05)
		{$P_t$};

	\end{tikzpicture}
	
	\caption{One update of the path multiplicities. Here the vertices $k_1$, $k_2$, and $k_3$ are the incoming neighbors of the vertex $j$.}
	\label{fig:multiplicity-recursion}
\end{figure}

The following elementary identity relates the path multiplicities $N_t(i,j)$ to the entries of the product $R_t$.

\begin{lemma}\label{lem:path-representation}
	For every $t=0,\ldots,L$ and $i,j\in[d]$,
	\begin{equation}\label{eq:path-row-representation}
		(R_t)_{ij}=\frac{N_t(i,j)}{B_t}.
	\end{equation}
\end{lemma}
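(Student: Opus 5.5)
We argue by induction on $t$. For $t=0$ we have $R_0=I$, $B_0=1$ and $N_0(i,j)=\ind_{\{i=j\}}$, so \eqref{eq:path-row-representation} holds trivially. For the inductive step, assume $(R_{t-1})_{ij}=N_{t-1}(i,j)/B_{t-1}$ for all $i,j$. Writing $R_t=R_{t-1}\Pi_tP_t$, we compute the $(i,j)$ entry by first applying $\Pi_t$ to the row vector $(R_{t-1})_i$ and then applying $P_t$.

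By the convention $(x\Pi_\pi)(k)=x(\pi(k))$, we get
\[
\bigl((R_{t-1})_i\Pi_t\bigr)(k)=(R_{t-1})_{i,\pi_t(k)}.
\]
Since $P_t$ is $r_t$-regular, $(P_t)_{kj}=\frac{1}{r_t}\ind_{\{k\in\Gamma^-_{P_t}(j)\}}$. Hence
\[
(R_t)_{ij}=\frac{1}{r_t}\sum_{k\in\Gamma^-_{P_t}(j)}(R_{t-1})_{i,\pi_t(k)}
=\frac{1}{r_tB_{t-1}}\sum_{k\in\Gamma^-_{P_t}(j)}N_{t-1}(i,\pi_t(k)).
\]
By the recursion \eqref{eq:multiplicity-recursion}, this sum equals $N_t(i,j)$. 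Since $r_tB_{t-1}=B_t$, the induction closes.

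The only point requiring care is checking that the permutation convention in the matrix product matches the one used to derive \eqref{eq:multiplicity-recursion}: it must be $\pi_t(k)$, not $\pi_t^{-1}(k)$, in the index. Both come from the same definition $(x\Pi_\pi)(j)=x(\pi(j))$, so they agree, and there is no real obstacle.
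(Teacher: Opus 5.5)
Your proposal is correct and follows the paper's own proof: you use the same induction on $t$ with the trivial base case, expand $R_t=R_{t-1}\Pi_tP_t$ entrywise using the convention $(x\Pi_\pi)(k)=x(\pi(k))$, invoke the recursion \eqref{eq:multiplicity-recursion}, and close with $B_t=r_tB_{t-1}$. Your remark that the index is $\pi_t(k)$ rather than $\pi_t^{-1}(k)$, consistently in both places, is the right consistency check.
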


\begin{proof}
	The statement is immediate for $t=0$. Suppose that \eqref{eq:path-row-representation} holds at time $t-1$. Since $R_t=R_{t-1}\Pi_tP_t$, we have
	\[
		(R_t)_{ij}
		=
		\sum_{k=1}^d (R_{t-1})_{i,\pi_t(k)}(P_t)_{kj}
		=
		\frac{1}{r_tB_{t-1}}
		\sum_{k\in\Gamma_{P_t}^-(j)}
		N_{t-1}(i,\pi_t(k))
		=
		\frac{N_t(i,j)}{B_t},
	\]
	where in the last equality we used \eqref{eq:multiplicity-recursion} and $B_t=r_tB_{t-1}$.
\end{proof}

Since $R_t$ is stochastic, \eqref{eq:path-row-representation} immediately gives
\[
\sum_{j=1}^d N_t(i,j)=B_t
\]
for every $i\in[d]$. Thus each row contains exactly $B_t$ paths, counted with multiplicity. 

To describe how the path multiplicities are distributed among vertices, for each $i\in[d]$ define the random empirical measure
\begin{equation}\label{eq:nu-definition}
	\nu_t^{(i)}
	=
	\frac{1}{d}\sum_{j=1}^d\delta_{N_t(i,j)}.
\end{equation}
Intuitively, $\nu_t^{(i)}$ tells us how common each path multiplicity is among the endpoints. More precisely, for $m\geq0$, $\nu_t^{(i)}(\{m\})$ is the proportion of vertices $j$ with path multiplicity $m$. Since $\sum_j N_t(i,j)=B_t$, its first moment is deterministic and equal to
\[
\int k\,\dd\nu_t^{(i)}(k)=\frac{B_t}{d}.
\]
We analyze $\nu_t^{(i)}$ through its probability generating function
\begin{equation}\label{eq:empirical-pgf}
	M_t^{(i)}(z)
	=
	\int z^k\,\dd\nu_t^{(i)}(k)
	=
	\frac{1}{d}\sum_{j=1}^d z^{N_t(i,j)},
	\qquad |z|\leq1.
\end{equation}
With the purpose of studying $M_t^{(i)}$, we introduce the following operator. Let $P \in \mathbb{R}^{d \times d}$ be an $r$-regular kernel, $x=(x_1,\ldots,x_d)\in \overline{\mathbb D}^d$, and $\pi \in S_d$ be a permutation. Define
\begin{equation}\label{eq:T-definition}
	T_P(x,\pi)
	=
	\frac{1}{d}\sum_{j=1}^d
	\prod_{k\in\Gamma_P^-(j)}x_{\pi(k)}.
\end{equation}
For a fixed $i \in [d]$, this is precisely the object that appears in our dynamics. Indeed, fix $|z|\leq1$ and set
\[
x_j=z^{N_{t-1}(i,j)},\qquad j\in[d].
\]
Then, by \eqref{eq:multiplicity-recursion},
\begin{equation}\label{eq:T-and-M}
T_{P_t}(x,\pi_t) = \frac{1}{d} \sum_{j=1}^d z^{\sum_{k\in\Gamma_{P_t}^-(j)}N_{t-1}(i,\pi_t(k))} = \frac{1}{d} \sum_{j=1}^d z^{N_t(i,j)} = M_t^{(i)}(z)
\end{equation}

\subsection{Analysis of the probability generating function}

We now begin the analysis of the operator $T_P(x,\pi)$. For $x=(x_1,\ldots,x_d)\in\overline{\mathbb D}^d$, write
\[
\overline{x}=\frac{1}{d}\sum_{j=1}^d x_j,
\qquad
q(x)=\frac{1}{d}\sum_{j=1}^d|1-x_j|.
\]
Let $\pi \in S_d$ be a uniformly random permutation. For a fixed $j$, the product
\[
\prod_{k\in\Gamma_P^-(j)}x_{\pi(k)}
\]
is obtained by sampling $r$ coordinates of $x$ uniformly without replacement and multiplying them. If the coordinates were instead sampled independently with replacement, the expectation of the product would simply be $\overline{x}^{\,r}$. 
Our first step is to quantify the error of approximating $\E_\pi T_P(x,\pi)$ by the ideal $\overline{x}^{\,r}$.

\begin{lemma}\label{lem:one-step-bias}
For every $r\leq d/2$,
\begin{equation}\label{eq:one-step-bias}
\left|
\E_\pi T_P(x,\pi)-\overline x^{\,r}
\right|
\leq
\frac{3r^2}{2d}q(x).
\end{equation}
\end{lemma}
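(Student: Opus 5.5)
By linearity and symmetry, for each fixed $j$ the set $\{\pi(k):k\in\Gamma_P^-(j)\}$ is a uniformly random $r$-subset of $[d]$. It follows that
\[
\E_\pi T_P(x,\pi)=\E\prod_{s=1}^r x_{K_s},
\]
where $(K_1,\ldots,K_r)$ is a uniformly random ordered $r$-tuple of distinct indices, i.e.\ a sample drawn without replacement. On the other hand, $\overline x^{\,r}=\E\prod_{s=1}^r x_{I_s}$, where $I_1,\ldots,I_r$ are i.i.d.\ uniform on $[d]$. The plan is to couple the two samples and control the difference of the products via Lemma~\ref{lem:product-telescoping}.

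\textbf{Coupling.} We use the standard coupling: draw $I_1,\ldots,I_r$ i.i.d.\ uniform. Set $K_s=I_s$ whenever $I_s\notin\{K_1,\ldots,K_{s-1}\}$. Otherwise, let $K_s$ be uniform on the unused indices, drawn with fresh randomness. This produces a uniform sample without replacement, and $K_s\neq I_s$ only on the collision event $C_s=\{I_s\in\{K_1,\ldots,K_{s-1}\}\}$, which has probability $(s-1)/d$.

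Since all $x_j\in\overline{\mathbb D}$, Lemma~\ref{lem:product-telescoping} gives
\[
\Bigl|\E\prod_s x_{K_s}-\E\prod_s x_{I_s}\Bigr|\le\sum_{s=1}^r\E\bigl[|x_{K_s}-x_{I_s}|\ind_{C_s}\bigr]\le\sum_{s=1}^r\E\bigl[(|1-x_{K_s}|+|1-x_{I_s}|)\ind_{C_s}\bigr].
\]
It remains to bound each expectation by a multiple of $q(x)$ rather than by a constant.

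\textbf{The main obstacle: exposing the factor $q(x)$.} The two terms are handled separately.

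\emph{First term.} On $C_s$, the replacement $K_s$ is uniform over the $d-s+1$ unused indices, independently of the past. Hence
\[
\E\bigl[|1-x_{K_s}|\,\big|\,\mathcal F_{s-1},C_s\bigr]\le\frac{d\,q(x)}{d-s+1}\le 2q(x),
\]
using $s\le r\le d/2$. Multiplying by $\Pp(C_s)=(s-1)/d$ gives a contribution of at most $2(s-1)q(x)/d$.

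\emph{Second term.} The event $C_s$ means that $I_s$ hits one of the earlier $K_m$. Write
\[
\ind_{C_s}\le\sum_{m<s}\ind_{\{I_s=K_m\}}.
\]
Here $I_s$ is independent of $K_m$, and by exchangeability each $K_m$ is marginally uniform. Therefore
\[
\E\bigl[|1-x_{I_s}|\ind_{\{I_s=K_m\}}\bigr]=\frac1d\,\E|1-x_{K_m}|=\frac{q(x)}{d}.
\]
Summing over $m<s$ gives a contribution of at most $(s-1)q(x)/d$.

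\textbf{Conclusion.} Each $s$ contributes at most $3(s-1)q(x)/d$. Summing over $s=1,\ldots,r$ yields
\[
\Bigl|\E_\pi T_P(x,\pi)-\overline x^{\,r}\Bigr|\le\frac{3r(r-1)}{2d}\,q(x)\le\frac{3r^2}{2d}\,q(x).
\]
The delicate point is the factor $d/(d-s+1)\le 2$ in the first term; this is exactly where the hypothesis $r\le d/2$ is used.
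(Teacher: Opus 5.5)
Your proof is correct and follows the paper's argument essentially verbatim. It uses the same coupling of the with-replacement sample with the without-replacement sample, the telescoping bound of Lemma~\ref{lem:product-telescoping}, the same split of $|x_{I_s}-x_{K_s}|$ on the collision event $C_s$, and the hypothesis $r\le d/2$ exactly to get $d/(d-s+1)\le 2$. The only cosmetic difference is that you bound the $|1-x_{I_s}|$ term by summing over the events $\{I_s=K_m\}$, $m<s$, whereas the paper conditions on $I_s=j$ and uses $\Pp(j\in\{J_1,\ldots,J_{s-1}\})=(s-1)/d$; this is the same computation.
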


\begin{proof}
Let $I_1,\ldots,I_r$ be independent and uniformly distributed on $[d]$. We construct from them a coupled sample without replacement $J_1,\ldots,J_r$ as follows. Set $J_1=I_1$. Having chosen distinct $J_1,\ldots,J_{s-1}$, let
\[
J_s=
\begin{cases}
	I_s, & \text{if } I_s\notin\{J_1,\ldots,J_{s-1}\},\\
	\text{a uniform element of }[d]\setminus\{J_1,\ldots,J_{s-1}\},
	& \text{otherwise}.
\end{cases}
\]
Conditionally on $J_1,\ldots,J_{s-1}$, the variable $J_s$ is uniformly distributed on $[d]\setminus\{J_1,\ldots,J_{s-1}\}$. Hence $(J_1,\ldots,J_r)$ is a uniform sample without replacement from $[d]$. Therefore,
\[
\overline{x}^{\,r}
=
\E\prod_{s=1}^r x_{I_s},
\qquad
\E_\pi T_P(x,\pi)
=
\E\prod_{s=1}^r x_{J_s}.
\]
Since all coordinates have modulus at most one, Lemma \ref{lem:product-telescoping} gives
\[
\left|
\prod_{s=1}^r x_{I_s}-\prod_{s=1}^r x_{J_s}
\right|
\leq
\sum_{s=1}^r|x_{I_s}-x_{J_s}|.
\]
By construction, the $s$-th summand vanishes unless the collision event
\[
C_s:=\{I_s\in\{J_1,\ldots,J_{s-1}\}\}
\]
occurs. Notice that $\mathbb{P}(C_s)=(s-1)/d$. On this event,
\[
|x_{I_s}-x_{J_s}|
\leq
|1-x_{I_s}|+|1-x_{J_s}|.
\]
Since $I_s$ is uniform on $[d]$ and independent of $J_1,\ldots,J_{s-1}$,
\[
\E\left (|1-x_{I_s}|\ind_{C_s}\right )
= \frac{1}{d} \sum_{j=1}^d |1-x_j| \mathbb{E}(\one_{C_s} \mid I_s=j )
= \frac{s-1}{d}q(x).
\]
On the other hand, since $J_s$ is uniform on $[d]\setminus \{J_1,\ldots,J_{s-1}\}$,
\[
\E\left (|1-x_{J_s}|\mid C_s, J_1,\ldots,J_{s-1}\right )
\leq
\frac{\sum_{j=1}^d |1-x_j|}{d-s+1}
=
\frac{dq(x)}{d-s+1}.
\]
Since $\mathbb{P}(C_s\mid J_1,\ldots,J_{s-1})=(s-1)/d$, it follows that
\[
\E\left(|1-x_{J_s}|\ind_{C_s}\right)
\leq
\frac{s-1}{d}\frac{dq(x)}{d-s+1}
\leq
2\frac{s-1}{d}q(x),
\]
where we used $s\leq r\leq d/2$. Therefore,
\[
\E|x_{I_s}-x_{J_s}| = \E(|x_{I_s} - x_{J_s} | \one_{C_s})
\leq
3\frac{s-1}{d}q(x).
\]
Summing over $s=1,\ldots,r$ gives
\[
\left|\E_\pi T_P(x,\pi)-\overline{x}^{\,r}\right|
\leq
\frac{3r(r-1)}{2d}q(x)
\leq
\frac{3r^2}{2d}q(x).\qedhere
\]
\end{proof}

We next control the fluctuations using the following standard Poincar\'e inequality for the random transposition walk on the symmetric group.

\begin{lemma}[Poincar\'e inequality on $S_d$]
	\label{lem:permutation-poincare}
	Let $\pi$ be uniform on $S_d$, and let $(k,\ell)$ be uniform over the ordered pairs of distinct elements of $[d]$, independently of $\pi$. If $\tau_{k,\ell}$ denotes the transposition of $k$ and $\ell$, then, for every $f:S_d\to\mathbb C$,
	\[
	\Var_\pi(f(\pi))
	\leq
	\frac{d-1}{4}
	\E_{\pi,(k,\ell)}
	\left|f(\pi)-f(\pi\circ\tau_{k,\ell})\right|^2.
	\]
\end{lemma}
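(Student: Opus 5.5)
This is the spectral-gap Poincaré inequality for the random transposition chain on $S_d$. The plan is to identify that chain's Dirichlet form with the right-hand side and then use the known eigenvalues.

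Let $K$ be the Markov kernel on $S_d$ that moves from $\pi$ to $\pi\circ\tau_{k,\ell}$, where $(k,\ell)$ is a uniform ordered pair of distinct elements. Since $\tau_{k,\ell}=\tau_{\ell,k}$, this is right multiplication by a uniform transposition. The step distribution is a class function supported on the conjugacy class of transpositions, and it is symmetric, so $K$ is reversible with respect to uniform measure. Its Dirichlet form is
\[
\mathcal E(f,f)=\tfrac12\E_{\pi,(k,\ell)}\bigl|f(\pi)-f(\pi\circ\tau_{k,\ell})\bigr|^2 .
\]
The Poincaré inequality then reads $\Var_\pi(f)\le \mathcal E(f,f)/\mathrm{gap}(K)$. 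The claimed constant $\frac{d-1}{4}$ corresponds to $\mathrm{gap}(K)\ge \frac{2}{d-1}$, i.e.\ $\frac{1}{2\,\mathrm{gap}}\le\frac{d-1}{4}$. For complex $f$, apply the inequality separately to the real and imaginary parts and add the two bounds.

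To compute the gap, diagonalize $K$ via the irreducible representations $\rho_\lambda$ of $S_d$ (Diaconis--Shahshahani). On the isotypic component of $\rho_\lambda$, $K$ acts by the scalar
\[
\frac{\chi_\lambda(\tau)}{d_\lambda}=\frac{2}{d(d-1)}\sum_i\left[\binom{\lambda_i}{2}-\binom{\lambda_i'}{2}\right],
\]
where $\tau$ is any transposition and $d_\lambda$ is the dimension of $\rho_\lambda$; this is Frobenius's formula for the normalized character at a transposition. The largest nontrivial value occurs at $\lambda=(d-1,1)$, the standard representation, where the value is $1-\frac{2}{d-1}=\frac{d-3}{d-1}$. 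Hence $\mathrm{gap}=\frac{2}{d-1}$ exactly. One can confirm this quickly by observing that, on functions $f(\pi)=g(\pi(1))$ with $\sum_i g(i)=0$, $K$ acts as $1-\frac{2}{d-1}$.

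The main obstacle is justifying that no nontrivial representation yields a larger eigenvalue. This reduces to the standard dominance-order monotonicity of $\chi_\lambda(\tau)/d_\lambda$. The smallest eigenvalue, $-1$ at the sign representation, is irrelevant because Poincaré only needs the gap $1-\lambda_2$, not absolute spectral control. If one prefers to avoid representation theory, an alternative is a comparison or induction argument over $d$ (e.g.\ the martingale decomposition of Lu--Yau type). However, I would simply cite the classical result that the random transposition spectral gap equals $\frac{2}{d-1}$ in this normalization and verify the bookkeeping of the factor $\frac12$ in the Dirichlet form, which is exactly what produces $\frac{d-1}{4}$.
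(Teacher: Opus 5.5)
Your proposal is correct and takes the same route as the paper. The right-hand side equals $2\mathcal E(f,f)$ for the non-lazy random transposition walk (each transposition with probability $2/(d(d-1))$), and the bound follows from the spectral gap $2/(d-1)$, which the paper simply cites (Erbar--Maas--Tetali, Diaconis--Shahshahani). Your extra details — the Frobenius character ratio, dominance-order monotonicity placing the second eigenvalue at $(d-1,1)$, the explicit eigenfunction check, and splitting complex $f$ into real and imaginary parts — are all correct and just spell out what the paper takes from the literature.
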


Indeed, in the simple random transposition walk each transposition is chosen with probability $2/(d(d-1))$ and its spectral gap is $2/(d-1)$; see
\cite[Section~1.3]{ErbarMaasTetali2015}.
The random transposition walk and its spectral analysis go back to
Diaconis and Shahshahani \cite{DiaconisShahshahani1981}, who use a
closely related lazy normalization.

Taking $f=T_P(x,\cdot)$ in Lemma~\ref{lem:permutation-poincare}, we can therefore estimate its variance by controlling how much it changes when two coordinates are transposed. This gives the following estimate.

\begin{lemma}\label{lem:one-step-variance}
	Let $P$ be an $r$-regular kernel. For every $x\in\overline{\mathbb D}^d$,
	\begin{equation}\label{eq:one-step-variance}
		\Var_\pi\bigl(T_P(x,\pi)\bigr)
		\leq
		\frac{4r^3}{d}q(x)^2.
	\end{equation}
\end{lemma}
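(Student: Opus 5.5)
We apply Lemma~\ref{lem:permutation-poincare} to $f(\pi)=T_P(x,\pi)$. This reduces the proof to bounding $\E_{\pi,(k,\ell)}|f(\pi)-f(\pi\circ\tau_{k,\ell})|^2$ by $\frac{16r^3}{d(d-1)}q(x)^2$. Multiplying by the factor $\frac{d-1}{4}$ then gives \eqref{eq:one-step-variance}.

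First we identify which terms of $T_P$ move under a transposition. Composing with $\tau_{k,\ell}$ changes $x_{\pi(m)}$ only for $m\in\{k,\ell\}$, where $x_{\pi(k)}$ and $x_{\pi(\ell)}$ are swapped. So in the sum defining $T_P(x,\pi)$, only the products indexed by $j$ with $\Gamma_P^-(j)$ containing $k$ or $\ell$ can change, and there are at most $|\Gamma_P^+(k)|+|\Gamma_P^+(\ell)|\le 2r$ such $j$. For each such $j$, Lemma~\ref{lem:product-telescoping} bounds the change of the product by
\[
|x_{\pi(k)}-x_{\pi(\ell)}|\le |1-x_{\pi(k)}|+|1-x_{\pi(\ell)}|.
\]
If both $k,\ell\in\Gamma_P^-(j)$, the product is unchanged, and the bound is still valid. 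Hence
\[
|f(\pi)-f(\pi\circ\tau_{k,\ell})|\le \frac{2r}{d}\bigl(|1-x_{\pi(k)}|+|1-x_{\pi(\ell)}|\bigr).
\]

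Next we square and average. The pair $(\pi(k),\pi(\ell))$ is a uniform ordered pair of distinct indices $(a,b)$. Writing $y_a=|1-x_a|\le 2$, we have $(y_a+y_b)^2\le 2y_a^2+2y_b^2$, so
\[
\E|f(\pi)-f(\pi\circ\tau_{k,\ell})|^2 \le \frac{4r^2}{d^2}\cdot\frac{4}{d}\sum_a y_a^2 .
\]
This is the main obstacle: $\frac1d\sum_a y_a^2$ is controlled by $q(x)$, not by $q(x)^2$. In particular, $y_a^2\le 2y_a$ only gives a bound of order $r^2q(x)/d^2$, which is linear in $q(x)$.

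To obtain $q(x)^2$ and the extra factor of $r$, we will instead use a sharper change bound. Since $|x|\le1$, we have $|\prod_{m\in\Gamma}x_{\pi(m)}|\le 1$. We combine this with a Cauchy--Schwarz count over the $j$'s: the change in the $j$-th product is at most $y_{\pi(k)}+y_{\pi(\ell)}$. We then try to exploit the cross terms $y_ay_b$ for distinct $a,b$, whose average is at most $q(x)^2\cdot\frac{d}{d-1}$. The diagonal contributions $y_a^2$ are what need to be avoided.

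Concretely, we first try the following route. Expand $f(\pi)-f(\pi\circ\tau_{k,\ell})$ by conditioning on the unordered set $\{\pi(k),\pi(\ell)\}$. The difference is antisymmetric under swapping the two values, and it can be written as $(x_{\pi(k)}-x_{\pi(\ell)})$ times a sum of at most $2r$ products of $r-1$ entries, each of modulus at most $1$. Its square then involves $|x_a-x_b|^2\le 2(y_a+y_b)\max(y_a,y_b)$.

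If this still does not yield a pure $q^2$, the fallback is to replace $y_a^2$ by $2y_a$ to reach a bound of order $r^2q/d$, and then use the bias-style coupling from Lemma~\ref{lem:one-step-bias} to upgrade. We expect the intended bookkeeping, with the constant $4r^3$, to arise from $2r$ affected terms times an $r$-fold telescoping, together with the crude bound $|x_{\pi(k)}-x_{\pi(\ell)}|\le y_{\pi(k)}+y_{\pi(\ell)}$ averaged against independent-like pairs. Getting exactly $q(x)^2$ rather than $\frac1d\sum y_a^2$ is the step we would scrutinize most carefully.
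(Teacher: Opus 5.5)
You correctly reduce the problem via Lemma~\ref{lem:permutation-poincare} to proving a bound of $\frac{16r^3}{d(d-1)}q(x)^2$, and you correctly see why the termwise bound fails. However, the proposal never supplies the idea that removes the diagonal terms, so it does not prove the lemma. The obstruction is structural, not bookkeeping. Any bound on $|T_P(x,\pi)-T_P(x,\pi\circ\tau_{k,\ell})|$ that depends only on $x_{\pi(k)},x_{\pi(\ell)}$ must fail; this covers your $\frac{2r}{d}(y_{\pi(k)}+y_{\pi(\ell)})$ and the refinement with $2(y_a+y_b)\max(y_a,y_b)$. For instance, take $x=(-1,1,\ldots,1)$. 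Then $q(x)=2/d$, and $T_P(x,\pi)=1-2r/d$ for every $\pi$, because exactly the $r$ out-neighbors of $\pi^{-1}(1)$ carry the factor $-1$. So the variance is $0$, while your route gives order $r^2/d^2$, a factor $d/r$ above the claimed $16r^3/d^3$. The fallback via Lemma~\ref{lem:one-step-bias} cannot help either. That coupling controls a conditional mean, and it gives no mechanism for turning a bound linear in $q(x)$ into one quadratic in $q(x)$.

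The missing step is to keep the second factor of the difference and use regularity to show it is itself small. Let $A=\Gamma_P^+(k)\setminus\Gamma_P^+(\ell)$ and $B=\Gamma_P^+(\ell)\setminus\Gamma_P^+(k)$; common out-neighbors contribute nothing, by commutativity. Then
\[
T_P(x,\pi)-T_P(x,\pi\circ\tau_{k,\ell})
=
\frac{x_{\pi(k)}-x_{\pi(\ell)}}{d}\Bigl(\sum_{j\in A}U_j-\sum_{j\in B}V_j\Bigr),
\]
where $U_j$ is the product of $x_{\pi(v)}$ over $v\in\Gamma_P^-(j)\setminus\{k\}$, and $V_j$ is the product over $v\in\Gamma_P^-(j)\setminus\{\ell\}$.

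Since both out-degrees equal $r$, we have $|A|=|B|$. With a bijection $\phi\colon A\to B$, the bracket equals $\sum_{j\in A}(U_j-V_{\phi(j)})$, so the leading constants cancel. By Lemma~\ref{lem:product-telescoping}, each $|U_j-V_{\phi(j)}|\le|1-U_j|+|1-V_{\phi(j)}|$ is bounded by a sum of $a_{\pi(v)}=|1-x_{\pi(v)}|$ over indices $v\notin\{k,\ell\}$. This uses that $\ell\notin\Gamma_P^-(j)$ for $j\in A$, and symmetrically for $B$.

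Now apply $|w|^2\le2|w|$ for $|w|\le2$ to both factors, and Cauchy--Schwarz with $|A|\le r$. The squared difference is at most $\frac{4r}{d^2}(a_{\pi(k)}+a_{\pi(\ell)})D_\pi$, where $D_\pi$ is a sum of at most $2r^2$ terms $a_{\pi(v)}$ with $v\notin\{k,\ell\}$. Every resulting product is an off-diagonal term $a_{\pi(s)}a_{\pi(v)}$ with $s\neq v$, whose expectation is at most $\frac{d}{d-1}q(x)^2$. This gives exactly $\frac{16r^3}{d(d-1)}q(x)^2$.
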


\begin{proof}
	Let $\tau_{k,\ell}$ be the transposition of indices $k, \ell$. By Lemma \ref{lem:permutation-poincare}, it is enough to control the impact in $T_P(x,\pi)$ of swapping indices $k$ and $\ell$. Write
	\[
	X=x_{\pi(k)},
	\qquad
	Y=x_{\pi(\ell)}.
	\]
	Recall that
	\[ T_P(x,\pi) = \frac{1}{d} \sum_{j=1}^d \prod_{v\in \Gamma_P^{-}(j)} x_{\pi(v)}.\]
	Swapping $k$ and $\ell$ affects only the terms corresponding to the indices $j$ for which either $k\in \Gamma_P^{-}(j)$ or $\ell\in \Gamma_P^{-}(j)$. Such indices $j$ are precisely the outgoing neighbors of $k$ and $\ell$. Furthermore, note that if $j\in\Gamma_P^+(k)\cap\Gamma_P^+(\ell)$, then both $k$ and $\ell$ appear in the corresponding product, and swapping them leaves the product unchanged by commutativity. Hence only the indices belonging to exactly one of the two outgoing neighborhoods contribute to the difference. Set
	\[
	A=\Gamma_P^+(k)\setminus\Gamma_P^+(\ell),
	\qquad
	B=\Gamma_P^+(\ell)\setminus\Gamma_P^+(k).
	\]
	For $j\in A$ define the product
	\[
	U_j=\prod_{v\in\Gamma_P^-(j)\setminus\{k\}}x_{\pi(v)}.
	\]
	Similarly, for $j\in B$ define
	\[
	V_j=\prod_{v\in\Gamma_P^-(j)\setminus\{\ell\}}x_{\pi(v)}.
	\]
	A direct computation gives
	\begin{align*}
		T_P(x,\pi)-T_P(x,\pi\circ\tau_{k,\ell})
		&=
		\frac{1}{d}\sum_{j=1}^d
		\left(
		\prod_{v\in\Gamma_P^-(j)}x_{\pi(v)}
		-
		\prod_{v\in\Gamma_P^-(j)}x_{\pi\circ\tau_{k,\ell}(v)}
		\right)\\
		&=
		\frac{1}{d}\sum_{j\in A}(x_{\pi(k)}-x_{\pi(\ell)})U_j
		+
		\frac{1}{d}\sum_{j\in B}(x_{\pi(\ell)}-x_{\pi(k)})V_j\\
		&=
		\frac{X-Y}{d}
		\left(
		\sum_{j\in A}U_j-\sum_{j\in B}V_j
		\right).
	\end{align*}
	The regularity assumption implies that
	$|A|=|B|$ and thus there is a bijection $\phi \colon A\longrightarrow B$. By Cauchy--Schwarz inequality,
	\begin{equation}\label{eq:variance-c-s}
	\left|
	\sum_{j\in A}U_j-\sum_{j\in B}V_j
	\right|^2 = \left | \sum_{j \in A} \bigl (U_j - V_{\phi(j)} \bigr ) \right |^2
	\leq
	|A|\sum_{j\in A}|U_j-V_{\phi(j)}|^2.
	\end{equation}
	Set $a_v=|1-x_v|$. Since $|U_j|,|V_{\phi(j)}|\leq1$, we have
	\[
	|U_j-V_{\phi(j)}|^2
	\leq 2|U_j - V_{\phi(j)}|
	\leq
	2\bigl(|1-U_j|+|1-V_{\phi(j)}|\bigr).
	\]
	Moreover, for $w_1,\ldots,w_s\in\overline{\mathbb D}$, Lemma \ref{lem:product-telescoping} gives
	\[
	\left|1-\prod_{v=1}^s w_v\right|
	\leq
	\sum_{v=1}^s|1-w_v|.
	\]
	Consequently, by \eqref{eq:variance-c-s} and $|A|\leq r$,
	\[
	\left|
	\sum_{j\in A}U_j-\sum_{j\in B}V_j
	\right|^2
	\leq
	2|A| D_\pi\leq 2rD_\pi,
	\]
	where
	\[
	D_\pi
	=
	\sum_{j\in A}
	\sum_{v\in\Gamma_P^-(j)\setminus\{k\}}a_{\pi(v)}
	+
	\sum_{j\in B}
	\sum_{v\in\Gamma_P^-(j)\setminus\{\ell\}}a_{\pi(v)}.
	\]
	Similarly,
	\[
	|X-Y|^2
	\leq 2|X-Y| \leq
	2\bigl(a_{\pi(k)}+a_{\pi(\ell)}\bigr).
	\]
	Thus, 
	\begin{equation}\label{eq:variance-final}
	\left|T_P(x,\pi)-T_P(x,\pi\circ\tau_{k,\ell})\right|^2
	\leq
	\frac{4r}{d^2}
	\bigl(a_{\pi(k)}+a_{\pi(\ell)}\bigr)D_\pi.
	\end{equation}
	Each product $U_j$ and $V_j$ contains exactly $r-1$ factors. Hence $D_\pi$ is a sum of
	\[
	2|A|(r-1)\leq 2r^2
	\]
	terms of the form $a_{\pi(v)}$, where in every case $v\notin\{k,\ell\}$.
	
	For any two distinct indices $s,v\in[d]$, the pair
	$(\pi(s),\pi(v))$ is uniformly distributed over all ordered pairs of distinct elements of $[d]$. Therefore,
	\[
	\E_\pi\bigl[a_{\pi(s)}a_{\pi(v)}\bigr]
	=
	\frac{1}{d(d-1)}
	\sum_{i\neq j}a_ia_j
	\leq
	\frac{1}{d(d-1)}
	\left(\sum_{j=1}^d a_j\right)^2
	=
	\frac{d}{d-1}q(x)^2.
	\]
	Since every index $v$ appearing in $D_\pi$ is distinct from both $k$ and $\ell$, it follows that
	\[
	\E_\pi\left[
	\bigl(a_{\pi(k)}+a_{\pi(\ell)}\bigr)D_\pi
	\right]
	\leq
	4r^2\frac{d}{d-1}q(x)^2.
	\]
	Consequently, taking expectation in \eqref{eq:variance-final} yields
	\[
	\E_\pi
	\left|T_P(x,\pi)-T_P(x,\pi\circ\tau_{k,\ell})\right|^2
	\leq
	\frac{16r^3}{d(d-1)}q(x)^2.
	\]
	The result now follows from Lemma \ref{lem:permutation-poincare}.
\end{proof}

\section{Proof of Theorem~\ref{thm:complex-pgf}}\label{sec:stability}

This section is dedicated to proving Theorem~\ref{thm:complex-pgf}. Fix $i\in[d]$. Throughout this section, to simplify notation, we write
\[
N_t(j)=N_t(i,j),
\qquad
\nu_t=\nu_t^{(i)},
\qquad
M_t(z)=M_t^{(i)}(z).
\]
We also let
\[
\mathcal F_t=\sigma(\pi_1,\ldots,\pi_t),
\qquad
B_t=\prod_{s=1}^t r_s,
\qquad
\lambda_t=\frac{B_t}{d}.
\]
Recall that
\[
M_t(z)=\frac{1}{d}\sum_{j=1}^d z^{N_t(j)}
\]
is the empirical probability generating function of the path multiplicities after $t$ layers. In the previous section we showed that, conditionally on $\mathcal F_{t-1}$, its one-step evolution is given by
\[
M_t(z)=T_{P_t}(x,\pi_t),
\]
where $T_{P_t}$ is the operator defined in (\ref{eq:T-definition}) and $x_j=z^{N_{t-1}(j)}$. Moreover,
\[
\overline{x}=\frac{1}{d} \sum_{j=1}^d x_j=M_{t-1}(z).
\]
If the coordinates in each product defining $T_{P_t}$ were sampled independently with replacement, the corresponding conditional expectation would therefore be
\[
\overline{x}^{\,r_t}
=
M_{t-1}(z)^{r_t}.
\]
This suggests comparing $M_t(z)$ with
\begin{equation}\label{eq:binomial-comparator}
	m_t(z)
	=
	\left(1+\frac{z-1}{d}\right)^{B_t}.
\end{equation}
Indeed,
\[
M_0(z)=1+\frac{z-1}{d}=m_0(z),
\qquad
m_t(z)=m_{t-1}(z)^{r_t}.
\]
Thus $m_t$ is precisely the generating function obtained by iterating the idealized with-replacement recursion. Equivalently, $m_t$ is the probability generating function of a $\operatorname{Bin}(B_t,1/d)$ random variable.

Lemmas~\ref{lem:one-step-bias} and~\ref{lem:one-step-variance} will be used to control the discrepancy between the actual update $M_t$ and the idealized update $M_{t-1}^{r_t}$. The proof of Theorem~\ref{thm:complex-pgf} then consists of understanding how these errors accumulate through the iteration. To this end, define
\begin{equation}\label{eq:Delta-definition}
	\Delta_t(z)
	=
	\frac{M_t(z)-m_t(z)}{B_t}.
\end{equation}
Before turning to the proof of the theorem, we establish a simple deterministic bound for $\Delta_t$.

\begin{lemma}\label{lem:deterministic-delta}
	For every $t=0,\ldots,L$ and $z\in\overline{\mathbb D}$,
	\begin{equation}\label{eq:deterministic-delta}
		|\Delta_t(z)|
		\leq
		\frac{2|1-z|}{d}.
	\end{equation}
\end{lemma}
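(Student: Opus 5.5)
The bound will follow from the triangle inequality $|M_t(z)-m_t(z)|\le |1-M_t(z)|+|1-m_t(z)|$ together with a separate estimate of each term by $B_t|1-z|/d$. Dividing by $B_t$ then gives \eqref{eq:deterministic-delta}. The argument is purely deterministic and holds for every realization of the permutations.

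\emph{Empirical term.} Since $|z|\le1$, Lemma~\ref{lem:product-telescoping} (applied with all $x_s=z$ and $y_s=1$) gives $|1-z^n|\le n|1-z|$ for every integer $n\ge0$. Averaging over $j$,
\[
|1-M_t(z)|=\Bigl|\frac1d\sum_{j=1}^d\bigl(1-z^{N_t(j)}\bigr)\Bigr|\le\frac{|1-z|}{d}\sum_{j=1}^d N_t(j)=\frac{B_t|1-z|}{d},
\]
where the last step uses the identity $\sum_j N_t(j)=B_t$ that follows from Lemma~\ref{lem:path-representation}.

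\emph{Comparator term.} First check that $w:=1+(z-1)/d$ lies in $\overline{\mathbb D}$. This holds because $w=(1-1/d)\cdot1+(1/d)z$ is a convex combination of points of the closed disk. Applying Lemma~\ref{lem:product-telescoping} again to $w^{B_t}$ against $1$ gives
\[
|1-m_t(z)|\le B_t|1-w|=\frac{B_t|1-z|}{d}.
\]

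Adding the two estimates yields $|M_t(z)-m_t(z)|\le 2B_t|1-z|/d$, which is the claim. There is no genuine obstacle here. The only points needing care are that $w\in\overline{\mathbb D}$, so that the telescoping lemma applies, and that the case $t=0$ is consistent: there $B_0=1$, and the definition \eqref{eq:empirical-pgf} of $M_0$ agrees with $m_0$, giving $\Delta_0=0$.
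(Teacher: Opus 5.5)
Your proof is correct and matches the paper's argument: both bound $|1-M_t(z)|$ and $|1-m_t(z)|$ separately by $B_t|1-z|/d$, using $|1-z^n|\le n|1-z|$ and $\sum_j N_t(j)=B_t$, and then apply the triangle inequality. Your explicit check that $1+(z-1)/d\in\overline{\mathbb D}$ is a detail the paper leaves implicit, and it is exactly what is needed to apply Lemma~\ref{lem:product-telescoping} to $m_t$.
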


\begin{proof}
	The result will follow from the elementary inequality $|1-z^n|\leq n|1-z|$ for $|z|\leq1$, which is an application of Lemma \ref{lem:product-telescoping}. First, since $\sum_{j=1}^d N_t(j)=B_t$,
	\[
	|1-M_t(z)|
	\leq
	\frac{1}{d}\sum_{j=1}^d |1-z^{N_t(j)}|
	\leq
	|1-z|\frac{B_t}{d}.
	\]
	On the other hand, the same inequality gives
	\[
	|1-m_t(z)| = \left | 1- \left (1+\frac{z-1}{d}\right )^{B_t} \right |
	\leq
	B_t\left|\frac{z-1}{d}\right|
	=
	|1-z|\frac{B_t}{d}.
	\]
	Consequently,
	\[
	|M_t(z)-m_t(z)|
	\leq
	\frac{2|1-z|B_t}{d}
	\]
	and the result follows.
\end{proof}

The final ingredient is a standard comparison between $m_t$ and the probability generating function of a Poisson random variable with the same mean. For $\lambda\geq0$, let
\[
\Phi_\lambda(z)=e^{\lambda(z-1)}
\]
denote the probability generating function of a $\operatorname{Pois}(\lambda)$ random variable, defined for $z \in \overline{\mathbb{D}}$.

\begin{lemma}\label{lem:binomial-poisson-pgf}
	For every $d\geq1$, $t=0,\ldots,L$, and $z\in\overline{\mathbb D}$,
	\begin{equation}\label{eq:binomial-poisson-pgf}
		\left|
		m_t(z)-\Phi_{\lambda_t}(z)
		\right|
		\leq
		\frac{\lambda_t}{2d}|1-z|^2.
	\end{equation}
\end{lemma}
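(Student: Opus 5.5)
The comparison follows from the identity $m_t(z)=(1+w/B_t)^{B_t}$ with $w=\lambda_t(z-1)$, since $B_t/d=\lambda_t$ gives $(z-1)/d=w/B_t$. The idea is to write both generating functions as powers of per-factor terms and apply Lemma~\ref{lem:product-telescoping}. Set $n=B_t$ and $a=(z-1)/d$, so that
\[
m_t(z)=(1+a)^n,
\qquad
\Phi_{\lambda_t}(z)=e^{na}=(e^{a})^n .
\]

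The first step is to check that both factors lie in $\overline{\mathbb D}$. The factor $1+a=(1-1/d)+z/d$ is a convex combination of $1$ and $z$, so $|1+a|\le1$. The factor $e^{a}$ satisfies $|e^{a}|=e^{\operatorname{Re}(z-1)/d}\le1$, because $\operatorname{Re}z\le1$.

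Lemma~\ref{lem:product-telescoping} applied with $r=n$, $x_s=1+a$ and $y_s=e^{a}$ then gives
\[
|m_t(z)-\Phi_{\lambda_t}(z)|\le n\,|e^{a}-1-a|.
\]

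The remaining step, which is the only one needing care, is the bound $|e^{a}-1-a|\le |a|^2/2$ when $\operatorname{Re}a\le0$. The power series only gives $|a|^2 e^{|a|}/2$, which is not sharp enough. Instead, use the integral representation
\[
e^{a}-1-a=a^2\int_0^1(1-s)e^{sa}\,\dd s .
\]
Since $|e^{sa}|\le1$ for $s\in[0,1]$ when $\operatorname{Re}a\le0$, this yields $|e^{a}-1-a|\le |a|^2/2$.

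Combining the two estimates,
\[
|m_t(z)-\Phi_{\lambda_t}(z)|\le \frac{n}{2}\cdot\frac{|1-z|^2}{d^2}=\frac{\lambda_t}{2d}\,|1-z|^2,
\]
which is the claimed bound \eqref{eq:binomial-poisson-pgf}. This holds for every $d\ge1$ and every $t$; in the case $t=0$ we have $n=1$.
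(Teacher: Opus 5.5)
Your proof is correct and follows essentially the same route as the paper. Both arguments check that the two factors lie in $\overline{\mathbb D}$, use Lemma~\ref{lem:product-telescoping} to reduce the claim to bounding $B_t\,|e^{a}-1-a|$, and then bound this second-order Taylor remainder by $|a|^2/2$ using $|e^{sa}|\le 1$. Your explicit integral form of the remainder is exactly what justifies the paper's bound $\tfrac12\sup_{s}|g''(s)|$ for the complex-valued function $g(s)=e^{-s(1-z)/d}$.
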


\begin{proof}
	Set $a=1-z$. Since $|z|\leq1$,
	\[
	\left|1-\frac{a}{d}\right|
	=
	\left|
	\left(1-\frac{1}{d}\right)+\frac{z}{d}
	\right|
	\leq1,
	\qquad
	\left|e^{-a/d}\right|\leq1,
	\]
	where the second inequality uses that $a$ has nonnegative real part. Using Lemma~\ref{lem:product-telescoping} and
	$\lambda_t=B_t/d$, we obtain
	\[
		\left|m_t(z)-\Phi_{\lambda_t}(z)\right|
		=
		\left|
		\left(1-\frac{a}{d}\right)^{B_t}
		-
		\left(e^{-a/d}\right)^{B_t}
		\right|
		\leq
		B_t
		\left|
		1-\frac{a}{d}-e^{-a/d}
		\right|.
	\]
	It remains to bound the Taylor remainder of the exponential. Consider
	\[
	g(s)=e^{-sa/d},
	\qquad 0\leq s\leq1.
	\]
	By the Taylor series remainder formula,
	\[
	|g(1)-g(0)-g'(0)|
	\leq
	\frac12\sup_{0\leq s\leq1}|g''(s)|.
	\]
	For any $0\leq s \leq 1$,
	\[
	|g''(s)|=\frac{|a|^2}{d^2}|e^{-sa/d}| \leq \frac{|a|^2}{d^2}.
	\]
	Therefore,
	\[
	\left|
	1-\frac{a}{d}-e^{-a/d}
	\right|
	\leq
	\frac{|a|^2}{2d^2}.
	\]
	Combining the preceding estimates gives
	\[
	\left|m_t(z)-\Phi_{\lambda_t}(z)\right|
	\leq
	\frac{B_t|a|^2}{2d^2}
	=
	\frac{\lambda_t}{2d}|1-z|^2.\qedhere
	\]
\end{proof}

We are now ready to prove Theorem~\ref{thm:complex-pgf}. 

\begin{proof}[Proof of Theorem~\ref{thm:complex-pgf}]
	Fix $z\in\overline{\mathbb D}$. At the $t$-th layer, we compare $M_t$ with $M_{t-1}^{r_t}$. We do this by inserting the conditional expectation as an intermediate quantity. Define
	\[
	\beta_t
	=
	\E\left[M_t\mid\mathcal F_{t-1}\right]-M_{t-1}^{r_t},
	\qquad
	\varepsilon_t
	=
	M_t-\E\left[M_t\mid\mathcal F_{t-1}\right].
	\]
	Thus
	\begin{equation}\label{eq:one-step-decomposition}
		M_t-M_{t-1}^{r_t}
		=
		\beta_t+\varepsilon_t,
	\end{equation}
	where $\beta_t$ measures the discrepancy between the conditional mean of the actual update and the idealized update, while $\varepsilon_t$ is the centered fluctuation around this conditional mean. In particular, $\E[\varepsilon_t\mid\mathcal F_{t-1}]=0$. Recall that $m_t=m_{t-1}^{r_t}$. From \eqref{eq:one-step-decomposition},
	\[
	M_{t}-m_t = M_{t-1}^{r_t} - m_{t-1}^{r_t} + \beta_t + \varepsilon_t.
	\]
	Dividing by $B_t=r_t B_{t-1}$ gives the recursion
	\begin{equation}\label{eq:Delta-recursion}
		\Delta_t = \gamma_t \Delta_{t-1} + b_t + \zeta_t,
	\end{equation}
	where
	\[
	\gamma_t=\frac{M_{t-1}^{r_t} - m_{t-1}^{r_t}}{r_t(M_{t-1} - m_{t-1})},
	\qquad
	b_t=\frac{\beta_t}{B_t},
	\qquad
	\zeta_t=\frac{\varepsilon_t}{B_t}.
	\]
	The apparent singularity in the definition of $\gamma_t$ when $M_{t-1}=m_{t-1}$ is removable, so $\gamma_t$ is defined there by continuous extension. Next, we square and expand the error. Since $\gamma_t$, $\Delta_{t-1}$, and $b_t$ are
	$\mathcal F_{t-1}$-measurable and
	$\E[\zeta_t\mid\mathcal F_{t-1}]=0$, taking conditional second moments in (\ref{eq:Delta-recursion}) gives
	\begin{equation}\label{eq:delta-second-moment}
		\E\left[|\Delta_t|^2\mid\mathcal F_{t-1}\right]
		=
		|\gamma_t\Delta_{t-1}+b_t|^2
		+
		\E\left[|\zeta_t|^2\mid\mathcal F_{t-1}\right].
	\end{equation}
	
	It remains to estimate the size of quantities appearing in \eqref{eq:delta-second-moment}. First, since $|M_{t-1}(z)|\leq1$ and $|m_{t-1}(z)|\leq1$, from Lemma \ref{lem:product-telescoping} we obtain
	\[|\gamma_t|\leq1.\]
	Next, since $S_L\leq1/2$, we have $r_t\leq d/2$. Set
	$x\in\overline{\mathbb D}^d$ by $x_j=z^{N_{t-1}(j)}$. Then
	\[
	T_{P_t}(x,\pi_t)=M_t(z),
	\qquad
	\E_{\pi_t}T_{P_t}(x,\pi_t)
	=
	\E\left[M_t(z)\mid\mathcal F_{t-1}\right],
	\qquad
	\overline{x}=M_{t-1}(z).
	\]
	Hence, \cref{lem:one-step-bias} gives
	\[
	|b_t|
	=
	\frac{|\beta_t|}{B_t}
	\leq
	\frac{3r_t^2}{2d}\frac{q(x)}{B_t}.
	\]
	Similarly, \cref{lem:one-step-variance} yields
	\[
	\E\left[|\zeta_t|^2\mid\mathcal F_{t-1}\right]
	=
	\frac{1}{B_t^2}
	\Var_{\pi_t}\bigl(T_{P_t}(x,\pi_t)\bigr)
	\leq
	\frac{4r_t^3}{d}\frac{q(x)^2}{B_t^2}.
	\]
	Finally, using $|1-z^n|\leq n|1-z|$ and
	$\sum_{j=1}^dN_{t-1}(j)=B_{t-1}$,
	\begin{equation}\label{eq:q-dynamics}
		q(x)
		=
		\frac{1}{d}\sum_{j=1}^d|1-z^{N_{t-1}(j)}|
		\leq
		|1-z|\frac{B_{t-1}}{d} = |1-z|\frac{B_t}{dr_t}.
	\end{equation}
	Substituting \eqref{eq:q-dynamics} into the bounds for $b_t$ and $\zeta_t$ gives
	\[
	|b_t|
	\leq
	\frac{3|1-z|r_t}{2d^2},
	\qquad
	\E\left(|\zeta_t|^2\mid\mathcal F_{t-1}\right)
	\leq
	\frac{4|1-z|^2r_t}{d^3}.
	\]
	Using these estimates, together with $|\gamma_t|\leq1$ and
	\cref{lem:deterministic-delta}, from \eqref{eq:delta-second-moment} we obtain
	\[
	\begin{aligned}
		\E\left[|\Delta_t|^2\mid\mathcal F_{t-1}\right]
		&\leq
		|\Delta_{t-1}|^2
		+
		2|\Delta_{t-1}||b_t|
		+
		|b_t|^2
		+
		\E\left[|\zeta_t|^2\mid\mathcal F_{t-1}\right]\\
		&\leq
		|\Delta_{t-1}|^2
		+
		\frac{6|1-z|^2r_t}{d^3}
		+
		\frac{9|1-z|^2r_t^2}{4d^4}
		+
		\frac{4|1-z|^2r_t}{d^3}.
	\end{aligned}
	\]
	Since $r_t\leq d/2$,
	\[
	\frac{9r_t^2}{4d^4}
	\leq
	\frac{2r_t}{d^3},
	\]
	and hence
	\begin{equation}\label{eq:delta-recursion-bound}
		\E\left[|\Delta_t|^2\mid\mathcal F_{t-1}\right]
		\leq
		|\Delta_{t-1}|^2
		+
		12\frac{|1-z|^2r_t}{d^3}.
	\end{equation}
	Taking expectations and iterating \eqref{eq:delta-recursion-bound}, using $\Delta_0=0$, gives
	\[
	\E|\Delta_L|^2
	\leq
	12\frac{|1-z|^2}{d^3}
	\sum_{t=1}^L r_t
	=
	12\frac{|1-z|^2S_L}{d^2}.
	\]
	Finally, since $M_L-m_L=B_L\Delta_L$ and $B_L=\lambda_Ld$,
	\begin{equation}\label{eq:binomial-stability}
		\E|M_L(z)-m_L(z)|^2
		\leq
		12\lambda_L^2|1-z|^2S_L.
	\end{equation}
	
	It remains to compare $m_L$ to the probability generating function of the Poisson distribution. By Lemma~\ref{lem:binomial-poisson-pgf},
	\[
	|m_L(z)-e^{\lambda_L(z-1)}|
	\leq
	\frac{\lambda_L}{2d}|1-z|^2.
	\]
	Hence, by the triangle inequality in $L^2$,
	\[
	\begin{aligned}
		\left(
		\E\left|
		M_L(z)-e^{\lambda_L(z-1)}
		\right|^2
		\right)^{1/2}
		&\leq
		\sqrt{12}
		\lambda_L |1-z|\sqrt{S_L}
		+
		\frac{\lambda_L|1-z|^2}{2d}.
	\end{aligned}
	\]
	Since $|1-z|\leq2$  and $1/d\leq S_L\leq1/2$, we have
	\[
	\frac{|1-z|}{2d}\leq S_L \leq \sqrt{S_L/2}.
	\]
	Therefore,
	\[
	\left(
	\E\left|
	M_L(z)-e^{\lambda_L(z-1)}
	\right|^2
	\right)^{1/2}
	\leq
	\left(\sqrt{12}+\frac{1}{\sqrt{2}}\right)
	\lambda_L |1-z|\sqrt{S_L}\leq \sqrt{18} \lambda_L |1-z|\sqrt{S_L}.
	\]
	Squaring both sides proves the theorem.
\end{proof}

\section{Quantitative Poisson approximation}\label{sec:poisson-metrics}

In this section we derive \cref{cor:local-poisson,cor:poisson-berry-esseen,cor:wasserstein-poisson} from Theorem~\ref{thm:complex-pgf}. The main idea is to use Fourier inversion on the unit circle. The probabilities associated with the empirical multiplicity measure are the Fourier coefficients of its probability generating function, while cumulative probabilities can be recovered via Abel summation. The estimate of Theorem~\ref{thm:complex-pgf} then provides quantitative control of these coefficients.

We begin with Theorem~\ref{cor:local-poisson}. Fix $i\in[d]$ and write $\lambda=\lambda_L$. To simplify notation, let
\[
\nu=\nu_L^{(i)},
\qquad
M(z)=M_L^{(i)}(z),
\]
and let $Z\sim\operatorname{Pois}(\lambda)$. For $k\geq0$, define
\[
p_k=\Pp(Z=k),
\qquad
\widehat{\nu}_k=\nu(\{k\}),
\qquad
a_k=\widehat{\nu}_k-p_k.
\]
Define $H\colon  \overline{\mathbb{D}}\longrightarrow \mathbb{C}$ by $H(z)=M(z)-e^{\lambda(z-1)}$. Then
\begin{equation}\label{eq:H-definition}
	H(z)
	=
	\sum_{k=0}^\infty a_kz^k.
\end{equation}
Thus the errors $a_k$ in the point masses are precisely the Fourier coefficients of $H$ on the unit circle. This gives an integral representation for $a_k$, which can be estimated using Theorem~\ref{thm:complex-pgf}.

\begin{proof}[Proof of \cref{cor:local-poisson}]
	By the Fourier coefficient formula,
	\[
	a_k
	=
	\frac{1}{2\pi}
	\int_{-\pi}^{\pi}
	H(e^{it})e^{-ikt}\,\dd t.
	\]
	Hence
	\[
	\sup_{k\geq0}|a_k|
	\leq
	\frac{1}{2\pi}
	\int_{-\pi}^{\pi}|H(e^{it})|\,\dd t.
	\]
	Taking expectations and using Cauchy--Schwarz inequality and Theorem \ref{thm:complex-pgf},
	\[
		\E\sup_{k\geq0}|a_k|
		\leq
		\frac{1}{2\pi}
		\int_{-\pi}^{\pi}
		\bigl(\E|H(e^{it})|^2\bigr)^{1/2}\,\dd t
		\leq
		\sqrt{18}\lambda_L\sqrt{S_L}
		\frac{1}{2\pi}
		\int_{-\pi}^{\pi}|1-e^{it}|\,\dd t
		=
		\frac{12\sqrt{2}}{\pi}\lambda_L\sqrt{S_L}.\qedhere
	\]
\end{proof}

We next prove Theorem~\ref{cor:poisson-berry-esseen}. To pass from point probabilities to cumulative distribution functions, define
\begin{equation}\label{eq:c_m}
c_m
=
\sum_{k=0}^m a_k
=
\nu([0,m])-\Pp(Z\leq m),
\qquad m\geq0.
\end{equation}
Since $\nu$ and $\Pois(\lambda)$ have the same mean $\lambda$,
\[
\sum_{m=0}^\infty |c_m|
\leq
\sum_{m=0}^\infty \nu((m,\infty))
+
\sum_{m=0}^\infty \Pp(Z>m)
=
2\lambda.
\]
Thus the series $\sum_{m\geq0}c_mz^m$ converges absolutely on
$\overline{\mathbb D}$. Furthermore, since $a_k=c_k-c_{k-1}$, with the convention $c_{-1}=0$, for $|z|\leq1$ we have
\[
\begin{aligned}
	H(z)
	&=
	\sum_{k=0}^\infty a_k z^k
	=
	\sum_{k=0}^\infty (c_k-c_{k-1})z^k\\
	&=
	\sum_{k=0}^\infty c_kz^k
	-
	z\sum_{k=0}^\infty c_kz^k
	=
	(1-z)\sum_{k=0}^\infty c_kz^k.
\end{aligned}
\]
Thus, for any $|z|\leq 1$ with $z\neq 1$ we have
\begin{equation}\label{eq:abel-cdf}
	\frac{H(z)}{1-z}
	=
	\sum_{k=0}^\infty c_kz^k.
\end{equation}
The right-hand side is continuous on $\overline{\mathbb D}$, so the quotient extends continuously to $z=1$. In fact, since the two measures have the same mean, $\sum_{k}c_k=0$.

In other words, the errors $c_m$ in the cumulative distribution functions are precisely the Fourier coefficients of $H(z)/(1-z)$.  Moreover, the factor $|1-z|$ in Theorem~\ref{thm:complex-pgf} cancels the denominator in \eqref{eq:abel-cdf}. Therefore, the same Fourier argument used in the proof of Theorem~\ref{cor:local-poisson} yields a uniform bound for the cumulative distribution functions.

\begin{proof}[Proof of \cref{cor:poisson-berry-esseen}]
	By \eqref{eq:abel-cdf}, the coefficients $c_m$ are the Fourier coefficients of $H(z)/(1-z)$. Hence, for every $m\geq0$,
	\[
	c_m
	=
	\frac{1}{2\pi}
	\int_{-\pi}^{\pi}
	\frac{H(e^{it})}{1-e^{it}}
	e^{-imt}\,\dd t.
	\]
	Therefore,
	\[
	\sup_{m\geq0}|c_m|
	\leq
	\frac{1}{2\pi}
	\int_{-\pi}^{\pi}
	\frac{|H(e^{it})|}{|1-e^{it}|}\,\dd t.
	\]
	Taking expectations and using Cauchy--Schwarz inequality and Theorem \ref{thm:complex-pgf},
	\[
		\E\sup_{m\geq0}|c_m|
		\leq
		\frac{1}{2\pi}
		\int_{-\pi}^{\pi}
		\frac{\bigl(\E|H(e^{it})|^2\bigr)^{1/2}}
		{|1-e^{it}|}\,\dd t
		\leq
		\frac{1}{2\pi}
		\int_{-\pi}^{\pi}
		\sqrt{18}\lambda_L\sqrt{S_L}\,\dd t
		=
		\sqrt{18}\lambda_L\sqrt{S_L}.\qedhere
	\]
\end{proof}

Finally, we turn to Theorem~\ref{cor:wasserstein-poisson}. By the characterization
\eqref{eq:wasserstein-discrete}, we have
\[
W_1\bigl(\nu,\Pois(\lambda)\bigr)
=
\sum_{m=0}^\infty |c_m|,
\]
where $c_m$ are defined as in (\ref{eq:c_m}). By \eqref{eq:abel-cdf}, $c_m$ are precisely the Fourier coefficients of $H(z)/(1-z)$. Parseval's identity then allows us to control their $\ell_2$-norm directly from Theorem~\ref{thm:complex-pgf}. To pass from $\ell_2$ to $\ell_1$, we truncate the sequence at a level $N$: the initial segment is bounded by Cauchy--Schwarz inequality, while the tail is controlled using the equality of the first moments and the Poisson tail. Optimizing over $N$ yields the Wasserstein estimate, at the cost of the logarithmic factor appearing in \eqref{eq:wasserstein-bound-main}.

\begin{proof}[Proof of \cref{cor:wasserstein-poisson}]
	Let $c_m$ be defined as in \eqref{eq:c_m}. Then
	\eqref{eq:wasserstein-discrete} gives
	\[
	W_1\bigl(\nu,\Pois(\lambda)\bigr)
	=
	\sum_{m=0}^\infty |c_m|.
	\]
	We first control the $\ell_2$-norm of $(c_m)_{m\geq0}$. By
	\eqref{eq:abel-cdf}, the coefficients $c_m$ are the Fourier coefficients
	of $H(z)/(1-z)$. Hence, by Parseval's identity and
	Theorem~\ref{thm:complex-pgf}, 
	\[
		\E\sum_{m=0}^\infty |c_m|^2
		=
		\frac{1}{2\pi}
		\int_{-\pi}^{\pi}
		\E\left|
		\frac{H(e^{it})}{1-e^{it}}
		\right|^2\,\dd t
		\leq
		18\lambda_L^2 S_L.
	\]
	For every integer $N\geq1$, Cauchy--Schwarz inequality gives
	\begin{equation}\label{eq:wasserstein-head}
		\E\sum_{m=0}^{N-1}|c_m|
		\leq
		\sqrt{N}
		\left(
		\E\sum_{m=0}^\infty |c_m|^2
		\right)^{1/2}
		\leq
		\sqrt{18}\lambda_L\sqrt{NS_L}.
	\end{equation}
	Next, we control the tail. Let
	\[
	T_\nu(m)=\nu((m,\infty)),
	\qquad
	T_Z(m)=\Pp(Z>m).
	\]
	Since $\nu$ and $\operatorname{Pois}(\lambda)$ have the same first moment $\lambda$, we have
	\[
	\sum_{m=0}^\infty T_\nu(m)
	=
	\sum_{m=0}^\infty T_Z(m).
	\]
	Moreover, $c_m=T_Z(m)-T_\nu(m)$. Consequently,
	\[
	\sum_{m=N}^\infty T_\nu(m)
	=
	\sum_{m=N}^\infty T_Z(m)
	+
	\sum_{m=0}^{N-1}c_m.
	\]
	Using $|c_m|\leq T_\nu(m)+T_Z(m)$, it follows that
	\[
	\sum_{m=N}^\infty|c_m|
	\leq
	2\sum_{m=N}^\infty T_Z(m)
	+
	\sum_{m=0}^{N-1}|c_m|.
	\]
	Combining this with \eqref{eq:wasserstein-head},
	\begin{equation}\label{eq:wasserstein-truncation}
		\E W_1\bigl(\nu,\operatorname{Pois}(\lambda)\bigr)
		\leq
		2\sqrt{18}\lambda_L\sqrt{NS_L}
		+
		2\sum_{m=N}^\infty\Pp(Z>m).
	\end{equation}
For $Z\sim\Pois(\lambda)$ and every integer $N>\lambda$,
\[
\sum_{m=N}^\infty \Pp(Z>m)
=
\E(Z-N)_+
\leq
\E(Z \one_{Z>N})
=
\lambda\Pp(Z\geq N)
\leq
\lambda e^{-\lambda}
\left(\frac{e\lambda}{N}\right)^N.
\]
Assume $S_L\leq1/4$ and choose
\[
N
=
\left\lceil
(1+\lambda)
\frac{\log(1/S_L)}
{\log\log(1/S_L)}
\right\rceil.
\]
Then $N>\lambda$, and substituting $N$ into the preceding Poisson tail estimate gives
\[
\sum_{m=N}^\infty \Pp(Z>m)
\leq
\lambda e^{-\lambda}\left(\frac{e\lambda}{N}\right)^N
\leq \lambda S_L.
\]
To see this, set $a=\log(1/S_L)>1$. By the definition of $N$, we have $N\log a\geq(1+\lambda)a$. Hence, by the Poisson Chernoff bound,
\[
e^{-\lambda}\left(\frac{e\lambda}{N}\right)^N
\leq
e^{\lambda(a-1)-N\log a}
\leq e^{-a-\lambda}
\leq S_L.
\]
Using this estimate in (\ref{eq:wasserstein-truncation}) yields
\[
\E W_1\bigl(\nu,\Pois(\lambda)\bigr)
\leq 2\sqrt{18}\lambda_L\sqrt{NS_L}
+
2\lambda S_L \leq
14\lambda\sqrt{1+\lambda}
\sqrt{
	S_L\frac{\log(1/S_L)}
	{\log\log(1/S_L)}
},
\]
which proves the quantitative estimate.
	
Finally, suppose that $\lambda_L\to\lambda$ and $S_L\to0$. By the additivity of Poisson random variables,
\[
	W_1\bigl(\Pois(\lambda_L),\Pois(\lambda)\bigr)
	=
	|\lambda_L-\lambda|.
\]
Hence, by the triangle inequality,
\[
	\E W_1\bigl(\nu_L^{(i)},\Pois(\lambda)\bigr)
	\longrightarrow0.
\]
Thus the convergence holds in $L^1$.
\end{proof}

\section{Permutation isotropy and exact \texorpdfstring{$L^2$}{L2} identities}\label{sec:l2}

The main goal of this section is to prove Theorem~\ref{thm:exact-covariance} and Corollary~\ref{cor:exact-l2}. We then apply these identities to the regular kernel model analyzed in earlier sections, which provides a concrete interpretation of the contraction factors and identifies the relevant mixing scale.

The first step is to analyze the effect of conjugating a matrix $M \in \mathbb{C}^{d\times d}$ by a uniform random permutation. We will see that this produces an isotropic averaging effect: all diagonal entries are averaged together, and likewise all off-diagonal entries are averaged together.

\begin{lemma}\label{lem:permutation-isotropy}
	Let $M\in\C^{d\times d}$ satisfy $M\one=0$, and let $\Pi$ be a uniform permutation matrix. Then
	\begin{equation}\label{eq:isotropy}
		\E\left(\Pi M\Pi^\mathsf{T}\right)
		=
		\frac{\tr M}{d-1}(I-J).
	\end{equation}
\end{lemma}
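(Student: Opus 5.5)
The plan is to compute the expectation entrywise and exploit the invariance of the uniform distribution under relabelling. Fix the convention $\Pi=\Pi_\pi$, so that $(\Pi M\Pi^\mathsf{T})_{ab}=M_{\sigma(a)\sigma(b)}$ for a permutation $\sigma$ determined by $\pi$ (either $\pi$ or $\pi^{-1}$, depending on the convention). Since $\pi$ is uniform on $S_d$, so is $\sigma$; only this matters for the argument.

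For a diagonal entry $a=b$, the index $\sigma(a)$ is uniform on $[d]$. Hence
\[
\E M_{\sigma(a)\sigma(a)}=\frac{1}{d}\tr M .
\]
For an off-diagonal entry $a\neq b$, the pair $(\sigma(a),\sigma(b))$ is uniform over the $d(d-1)$ ordered pairs of distinct indices. Hence
\[
\E M_{\sigma(a)\sigma(b)}=\frac{1}{d(d-1)}\sum_{k\neq\ell}M_{k\ell}.
\]

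Next, use the hypothesis $M\one=0$, which gives $\one^\mathsf{T}M\one=0$. Therefore $\sum_{k\neq\ell}M_{k\ell}=-\tr M$, and each off-diagonal expectation equals $-\tr M/(d(d-1))$.

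Finally, compare with the target matrix. The matrix $\frac{\tr M}{d-1}(I-J)$ has diagonal entries
\[
\frac{\tr M}{d-1}\Bigl(1-\frac1d\Bigr)=\frac{\tr M}{d},
\]
and off-diagonal entries $-\frac{\tr M}{d(d-1)}$. Both match the expectations computed above, which proves \eqref{eq:isotropy}.

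There is no real obstacle here. The only care needed is to get the permutation-matrix convention right, and the uniformity of $\sigma$ makes even that immaterial.
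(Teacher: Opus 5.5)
Your proof is correct, but it takes a more computational route than the paper. You evaluate each entry of $\E(\Pi M\Pi^\mathsf{T})$ directly, using that $\sigma(a)$ is uniform on $[d]$ and that $(\sigma(a),\sigma(b))$ is uniform on ordered pairs of distinct indices. With the paper's convention $(x\Pi_\pi)(j)=x(\pi(j))$, one indeed gets $\sigma=\pi^{-1}$.

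The paper never computes individual entries. It notes that $\overline M=\E(\Pi M\Pi^\mathsf{T})$ satisfies $S\overline M S^\mathsf{T}=\overline M$ for every permutation matrix $S$, because $S\Pi$ is again uniform. Hence $\overline M=aI+b(I-J)$, and two linear constraints fix the scalars: $\overline M\one=0$ forces $a=0$, and trace invariance gives $b=\tr M/(d-1)$.

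The symmetry argument is convention-free and avoids evaluating the off-diagonal average. Your computation is more explicit, and it shows for free that the weaker hypothesis $\one^\mathsf{T}M\one=0$ suffices. This is because the off-diagonal average depends on $M$ only through $\sum_{k\neq\ell}M_{k\ell}=\one^\mathsf{T}M\one-\tr M$. The paper's route can be adapted to that hypothesis too, but it needs one extra step: for $\overline M=\alpha I+\beta\one\one^\mathsf{T}$, the condition $\one^\mathsf{T}\overline M\one=0$ already forces $\overline M\one=0$.
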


\begin{proof}
	The matrix $\overline M=\E(\Pi M\Pi^\mathsf{T})$ is invariant under conjugation by every permutation matrix. Indeed, if $S$ is a permutation matrix, then $S\Pi$ is also a uniform permutation matrix and thus
	\[ S \overline M S^\T = \mathbb{E} \left ( S\Pi M \Pi^\T S^\T\right ) = \mathbb{E}\left(\Pi M\Pi^\mathsf{T}\right) = \overline{M}.\]
	Hence all its diagonal entries are equal and all its off-diagonal entries are equal. Thus
	\[
	\overline M=aI+b(I-J)
	\]
	for some $a,b\in\C$. Since $M\one =0$, we also have $\overline M \one =0$, and it follows that $a=0$. Finally, taking traces and using that the trace of a matrix is invariant under conjugation, we obtain $\tr M =b(d-1)$.
\end{proof}

For a doubly stochastic matrix $P$, we have $PJ=JP=J$. Moreover, every permutation matrix $\Pi$ satisfies $\Pi J=J\Pi=J$. As a consequence, centering is compatible with relabelling in the following sense.

\begin{lemma}\label{lem:centered-product}
	Let $P_1,\ldots,P_L$ be doubly stochastic matrices, let
	$\Pi_1,\ldots,\Pi_L$ be permutation matrices, and set
	$Q_t=P_t-J$. Then
	\begin{equation}\label{eq:centered-product}
		\Pi_1P_1\Pi_2P_2\cdots\Pi_LP_L-J
		=
		\Pi_1Q_1\Pi_2Q_2\cdots\Pi_LQ_L.
	\end{equation}
\end{lemma}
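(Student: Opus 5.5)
We prove \eqref{eq:centered-product} by induction on $L$, using only the absorption identities $PJ=JP=J$ for a doubly stochastic $P$, $\Pi J=J\Pi=J$ for a permutation matrix $\Pi$, and $J^2=J$. The useful consequence is that any product containing at least one factor $J$, with all other factors being permutation or doubly stochastic matrices (or $J$ itself), collapses to $J$.

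For $L=1$ we compute
\[
\Pi_1Q_1=\Pi_1P_1-\Pi_1J=\Pi_1P_1-J.
\]
This is exactly the claim.

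For the inductive step, set $A=\Pi_1P_1\cdots\Pi_{L-1}P_{L-1}$. This matrix is doubly stochastic, being a product of doubly stochastic matrices, so $AJ=J$. By the induction hypothesis,
\[
\Pi_1Q_1\cdots\Pi_{L-1}Q_{L-1}=A-J.
\]
Multiplying on the right by $\Pi_LQ_L=\Pi_LP_L-J$ gives
\[
(A-J)(\Pi_LP_L-J)=A\Pi_LP_L-AJ-J\Pi_LP_L+J^2.
\]
We now simplify each term. The second term is $AJ=J$. The third is $J\Pi_LP_L=JP_L=J$. The fourth is $J^2=J$. The right-hand side therefore equals
\[
A\Pi_LP_L-J-J+J=\Pi_1P_1\cdots\Pi_LP_L-J,
\]
which completes the induction.

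There is no real obstacle in this argument. The one point that needs checking is that $A$ is doubly stochastic, so that $AJ=J$. This holds because the class of doubly stochastic matrices is closed under multiplication and contains all permutation matrices.
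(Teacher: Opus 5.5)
Your proof is correct and is essentially the paper's argument. The paper expands $Q_1\Pi Q_2=P_1\Pi P_2-J$ using the absorption identities $PJ=JP=J$ and $\Pi J=J\Pi=J$, then iterates using that $P_1\Pi P_2$ is again doubly stochastic. That is the same computation as your inductive step with the doubly stochastic prefix $A$, and your explicit induction on $L$ is if anything slightly cleaner.
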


\begin{proof}
	Let $P_1$ and $P_2$ denote two arbitrary doubly stochastic matrices, and write $Q_t=P_t-J$ for $t=1,2$. We have
	\[
	Q_1 \Pi Q_2 = (P_1-J) \Pi (P_2 - J) = P_1 \Pi P_2 - P_1 \Pi J - J\Pi P_2 + J \Pi J.
	\]
	Since $J\Pi=\Pi J=J$ and $JP_t=P_tJ=J$, it follows that
	\[Q_1 \Pi Q_2 = P_1 \Pi P_2 - J - J + J = P_1 \Pi P_2 - J.\]
	Multiplying by a permutation matrix $\Pi'$ on the left, and using $\Pi' J=J$ proves the case $L=2$. Noting that $(P_1\Pi P_2)$ is doubly stochastic, iterating this identity proves the claim.
\end{proof}

We now prove \cref{thm:exact-covariance}. The argument is based on repeatedly applying the permutation isotropy identity (\ref{eq:isotropy}), starting from the last layer of the product.

\begin{proof}[Proof of \cref{thm:exact-covariance}]
	Let $D_L=R_L-J$. By \cref{lem:centered-product},
	\[
	D_L
	=
	\Pi_1Q_1\cdots\Pi_LQ_L.
	\]
	We first consider the case $L=1$. We have
	\[
	D_1D_1^\mathsf{T}
	=
	\Pi_1Q_1Q_1^\mathsf{T}\Pi_1^\mathsf{T}
	\]
	Since $Q_1^\mathsf{T}\one=0$, the matrix $Q_1Q_1^\mathsf{T}$ satisfies
	$Q_1Q_1^\mathsf{T}\one=0$. Moreover,
	$\tr(Q_1Q_1^\mathsf{T})=\|Q_1\|_F^2=(d-1)\alpha(P_1)$. Therefore, by \cref{lem:permutation-isotropy},
	\[
	\E\left(\Pi_1Q_1Q_1^\mathsf{T}\Pi_1^\mathsf{T}\right)
	=
	\alpha(P_1)(I-J).
	\]
	For larger $L$, the result follows from iterating this process and using that $\Pi_t$ are independent uniform permutations. More precisely, condition on $\Pi_1,\ldots,\Pi_{L-1}$ and notice that the previous argument gives
	\begin{align*}
	\E\left (D_LD_L^\mathsf{T} \mid \Pi_1,\ldots,\Pi_{L-1}\right )
	&=
	\alpha(P_L)
	\Pi_1Q_1\cdots\Pi_{L-1}
	Q_{L-1}(I-J)Q_{L-1}^\mathsf{T}
	\Pi_{L-1}^\mathsf{T}\cdots
	Q_1^\mathsf{T}\Pi_1^\mathsf{T}.
	\\ 
	&=\alpha(P_L)
	\Pi_1Q_1\cdots\Pi_{L-1}
	Q_{L-1}Q_{L-1}^\mathsf{T}
	\Pi_{L-1}^\mathsf{T}\cdots
	Q_1^\mathsf{T}\Pi_1^\mathsf{T}.
	\end{align*}
	Thus averaging over the last permutation removes the last layer and contributes exactly the scalar factor $\alpha(P_L)$. Iterating this procedure for $\Pi_{L-1},\ldots,\Pi_1$ yields
	\[
	\E(D_LD_L^\mathsf{T})
	=
	\left(\prod_{t=1}^L\alpha(P_t)\right)(I-J),
	\]
	which proves the theorem.
\end{proof}

We now derive \cref{cor:exact-l2} as an immediate application of \cref{thm:exact-covariance}. Note that, for any probability row vector $\nu$ on $[d]$, the $\chi^2$-divergence from the uniform distribution $u$ satisfies
\begin{equation}\label{eq:chi-square}
	\chi^2(\nu\|u)
	=
	\sum_{j=1}^d\frac{(\nu_j-u_j)^2}{u_j}
	=
	d\sum_{j=1}^d(\nu_j-u_j)^2
	=
	d\|\nu-u\|_2^2,
\end{equation}
since $u_j=1/d$ for every $j$. Thus, in our setting, controlling the $\chi^2$-divergence of a row of $R_L$ from $u$ is equivalent to controlling its Euclidean distance.

\begin{proof}[Proof of \cref{cor:exact-l2}]
	Taking traces in \eqref{eq:left-covariance} gives
	\[
	\E\| R_L-J\|_F^2
	=
	\left(\prod_{t=1}^L\alpha(P_t)\right)\tr(I-J).
	\]
	Since $\tr(I-J)=(d-1)$, this proves \eqref{eq:exact-frobenius}. For the second assertion, set $x=\mu-u$. Note that $xJ=0$ as both $\mu$ and $u$ are probability measures. Moreover, since $u$ is stationary for every factor of $R_L$,
	\[
	\mu R_L-u = \mu R_L-u R_L=x R_L = x( R_L-J).
	\]
	Therefore, we have
	\[ \|\mu R_L-u\|_2^2 = x (R_L-J)(R_L-J)^\T x^\T.\]
	Taking expectations and applying \cref{thm:exact-covariance},
	\[
	\E\|\mu R_L-u\|_2^2
	=
	x\,\E\bigl[(R_L-J)(R_L-J)^\mathsf{T}\bigr]x^\mathsf{T}
	=
	\left(\prod_{t=1}^L\alpha(P_t)\right)
	x(I-J)x^\mathsf{T}.
	\]
	Finally, since $xJ=0$, we have $x(I-J)x^\mathsf{T}=xx^\mathsf{T}=\|x\|_2^2$. Multiplying by $d$ and using \eqref{eq:chi-square} proves the result.
\end{proof}

\subsection{The regular case}

We now specialize the preceding identities to the regular kernels considered in earlier sections. Recall that a matrix $P\in \mathbb{R}^{d\times d}$ is an $r$-regular kernel when it has exactly $r$ nonzero entries in each row and column, each equal to $1/r$. In this case, the contraction coefficient $\alpha(P)$ can be computed explicitly.

\begin{lemma}\label{lem:regular-alpha}
	Let $P \in \mathbb{R}^{d\times d}$ be an $r$-regular kernel. Then
	\begin{equation}\label{eq:regular-alpha}
		\alpha(P)
		=
		\frac{d-r}{r(d-1)}.
	\end{equation}
\end{lemma}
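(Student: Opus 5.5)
The plan is to compute $\|Q\|_F^2$ directly from the entries of $P$ and then divide by $d-1$, which is the definition of $\alpha(P)$. Write $Q=P-J$. Since $P$ is doubly stochastic, $PJ=JP=J$, and $J$ is symmetric and idempotent. Expanding the Frobenius norm gives
\[
\|Q\|_F^2=\|P\|_F^2-2\langle P,J\rangle_F+\|J\|_F^2 .
\]
We then evaluate each of the three terms separately.

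For the first term, an $r$-regular kernel has exactly $r$ entries equal to $1/r$ in each of its $d$ rows and zeros elsewhere. Hence
\[
\|P\|_F^2=d\cdot r\cdot\frac1{r^2}=\frac dr .
\]
For the cross term,
\[
\langle P,J\rangle_F=\tr(P^\T J)=\frac1d\,\one^\T P\one=\frac1d\cdot d=1,
\]
because the entries of $P$ sum to $d$. For the last term, $J$ has $d^2$ entries each equal to $1/d$, so $\|J\|_F^2=d^2\cdot d^{-2}=1$.

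Combining the three pieces,
\[
\|Q\|_F^2=\frac dr-2+1=\frac{d-r}{r}.
\]
Dividing by $d-1$ gives $\alpha(P)=\frac{d-r}{r(d-1)}$, which is \eqref{eq:regular-alpha}. The bound $\alpha(P)\le 1/r$ also follows, since $d-r\le d-1$ whenever $r\ge1$.

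There is no real obstacle here. The only points to check are the sign bookkeeping in the cross term and the fact that the identity $\langle P,J\rangle_F=1$ uses only stochasticity of $P$. One could equally compute entrywise, summing $(1/r-1/d)^2$ over the $dr$ nonzero entries and $1/d^2$ over the $d(d-r)$ zero entries, as a consistency check.
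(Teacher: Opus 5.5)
Your proposal is correct and follows the paper's proof exactly: the same expansion $\|P-J\|_F^2=\|P\|_F^2-2\langle P,J\rangle_F+\|J\|_F^2$, with $\|P\|_F^2=d/r$, $\langle P,J\rangle_F=1$ from stochasticity, and $\|J\|_F^2=1$, followed by division by $d-1$. Your evaluation of the cross term as $\frac1d\one^\T P\one$ differs from the paper's $\tr(PJ^\T)=\tr(J)$ only cosmetically.
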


\begin{proof}
	Since every row of $P$ contains exactly $r$ entries equal to $1/r$, we have $\|P\|_F^2=d/r$. Moreover, since $P$ is stochastic, 
	\[ \langle P,J\rangle_F= \tr(PJ^\mathsf{T}) = \tr(J)= 1.\] 
	We also have $\|J\|_F^2=1$. Therefore,
	\[
	\|P-J\|_F^2
	=
	\|P\|_F^2-2\langle P,J\rangle_F+\|J\|_F^2
	=
	\frac{d}{r}-1.
	\]
	Dividing by $d-1$ gives the formula for $\alpha(P)$.
\end{proof}

We now return to the regular setting. Suppose that $P_t$ is $r_t$-regular
for $t=1,\ldots,L$, and set
\[
B_L=\prod_{t=1}^L r_t.
\]
For the product $R_L$ defined in \eqref{eq:regular-product},
\cref{cor:exact-l2,lem:regular-alpha} give explicit formulas for its
Frobenius distance from $J$ and for the $\chi^2$-divergence of each row
from equilibrium.

\begin{corollary}\label{cor:regular-l2}
	Under the preceding assumptions, for every $i\in[d]$,
	\begin{equation}\label{eq:regular-l2}
		\E\|R_L-J\|_F^2
		=
		\E\chi^2\bigl((R_L)_i\|u\bigr)
		=
		\frac{d-1}{B_L}
		\prod_{t=1}^L\frac{d-r_t}{d-1}.
	\end{equation}
\end{corollary}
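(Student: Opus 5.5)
The plan is to specialize \cref{cor:exact-l2} using the explicit value of $\alpha$ from \cref{lem:regular-alpha}, and then match the two quantities in \eqref{eq:regular-l2}.

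First, by \cref{lem:regular-alpha}, each layer contributes
\[
\alpha(P_t)=\frac{d-r_t}{r_t(d-1)}.
\]
Multiplying over $t$ and using $B_L=\prod_t r_t$ gives
\[
\prod_{t=1}^L\alpha(P_t)=\frac{1}{B_L}\prod_{t=1}^L\frac{d-r_t}{d-1}.
\]
Substituting this into \eqref{eq:exact-frobenius} yields the claimed formula for $\E\|R_L-J\|_F^2$, namely $(d-1)$ times the product.

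For the row statement, we apply \eqref{eq:exact-chi-square-general} with $\mu=e_i^\mathsf{T}$, so that $\mu R_L=(R_L)_i$. Using \eqref{eq:chi-square},
\[
\chi^2(e_i^\mathsf{T}\|u)=d\Bigl[\bigl(1-\tfrac1d\bigr)^2+(d-1)\tfrac{1}{d^2}\Bigr]=d-1.
\]
Hence $\E\chi^2((R_L)_i\|u)=(d-1)\prod_t\alpha(P_t)$, which is the same expression as before. This proves both equalities in \eqref{eq:regular-l2}.

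As a cross-check, one can also argue directly from \cref{thm:exact-covariance}. The $(i,i)$ entry of $\E[(R_L-J)(R_L-J)^\mathsf{T}]$ equals $\E\|(R_L)_i-u\|_2^2$, and it equals $\prod_t\alpha(P_t)\,(1-1/d)$. Multiplying by $d$ recovers the same value. There is no real obstacle here; the only step needing care is the small computation of $\chi^2(e_i^\mathsf{T}\|u)=d-1$.
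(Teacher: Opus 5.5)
Your proposal is correct and follows the paper's proof: substitute the value of $\alpha(P_t)$ from \cref{lem:regular-alpha} into both identities of \cref{cor:exact-l2}, taking $\mu=e_i^\mathsf{T}$ and using $\chi^2(e_i^\mathsf{T}\|u)=d-1$ for the row statement. Your diagonal-entry cross-check via \cref{thm:exact-covariance} is also valid, though the paper does not need it.
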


\begin{proof}
	First, Corollary \ref{cor:exact-l2} and \cref{lem:regular-alpha} give
	\[
	\E\|R_L-J\|_F^2
	=
	(d-1)\prod_{t=1}^L\frac{d-r_t}{r_t(d-1)}
	=
	\frac{d-1}{B_L}
	\prod_{t=1}^L\frac{d-r_t}{d-1}.
	\]
	Similarly, applying Corollary \ref{cor:exact-l2} with $\mu=e_j^\mathsf{T}$ and using $\chi^2(e_j^\mathsf{T}\|u)=d-1$ gives the same formula for $\E\chi^2((R_L)_j\|u)$.
\end{proof}

\section{Total-variation profile}\label{sec:tv-profile}

We now turn to the total-variation profile in Corollary~\ref{thm:tv-profile}. The key point is that the path-multiplicity representation expresses the distance of a row from equilibrium directly in terms of the empirical multiplicity measure. Indeed, for every $i,j\in[d]$,
\[
(R_L)_{ij}=\frac{N_L(i,j)}{B_L}.
\]
Since $B_L=\lambda_Ld$, it follows that
\begin{equation}\label{eq:tv-multiplicity}
		\bigl\|(R_L)_i-u\bigr\|_{\TV}
		=
		\frac{1}{2}\sum_{j=1}^d
		\left|
		\frac{N_L(i,j)}{B_L}-\frac{1}{d}
		\right|
		=
		\frac{1}{2\lambda_L}
		\int |k-\lambda_L|\,\dd\nu_L^{(i)}(k).
\end{equation}
Thus, the total-variation distance is a Lipschitz functional of the empirical multiplicity measure. Consequently, Theorem~\ref{cor:wasserstein-poisson} already yields its limiting profile. For the quantitative estimate, we instead use Theorem~\ref{cor:poisson-berry-esseen}, which avoids the logarithmic loss in the Wasserstein bound.

\begin{proof}[Proof of Corollary~\ref{thm:tv-profile}]
	Fix $i \in [d]$. We first consider the precritical regime where $\lambda_L$ is small. Since the path multiplicities $N_L(i,j)$ sum to $B_L$, $(R_L)_i$ is supported on at most $B_L$ vertices. Equivalently, at least $d-B_L$ entries of $(R_L)_i$ are equal to zero. Therefore, for every realization,
	\[
	\bigl\|(R_L)_i-u\bigr\|_{\TV}
	\geq
	1-\frac{B_L}{d}
	=
	1-\lambda_L
	\]
	whenever $\lambda_L\leq1$, proving (i).
	
	Since $\nu_L^{(i)}$ has mean $\lambda_L$, by
	\eqref{eq:tv-multiplicity} we have
	\[
		\bigl\|(R_L)_i-u\bigr\|_{\TV}
		=
		\frac{1}{2\lambda_L}
		\int |k-\lambda_L|\,\dd\nu_L^{(i)}(k)
		=
		\frac{1}{\lambda_L}
		\int(\lambda_L-k)_+\,\dd\nu_L^{(i)}(k).
	\]
	Similarly, for $Z_{\lambda_L}\sim\Pois(\lambda_L)$,
	\[
	\Psi(\lambda_L)
	=
	\frac{1}{\lambda_L}
	\E(\lambda_L-Z_{\lambda_L})_+.
	\]
	Using the identity
	\[
	(\lambda-x)_+
	=
	\int_0^\lambda \one_{\{x<t\}}\,\dd t,
	\]
	we obtain
\[
\begin{aligned}
	\left|
	\bigl\|(R_L)_i-u\bigr\|_{\TV}
	-
	\Psi(\lambda_L)
	\right|
	&\leq
	\frac{1}{\lambda_L}
	\int_0^{\lambda_L}
	\left|
	\nu_L^{(i)}([0,t))
	-
	\Pp(Z_{\lambda_L}<t)
	\right|\, dt\\
	&\leq
	\sup_{m\geq0}
	\left|
	\nu_L^{(i)}([0,m])
	-
	\Pp(Z_{\lambda_L}\leq m)
	\right|,
\end{aligned}
\]
	where the last inequality uses that both measures are supported on the nonnegative integers. Taking expectations, the quantitative estimate now follows from
	Theorem~\ref{cor:poisson-berry-esseen}. If $\lambda_L\to\lambda\in(0,\infty)$ and $S_L\to0$, then
	\eqref{eq:tv-profile-rate-main}, together with the continuity of $\Psi$,
	gives the claimed convergence in $L^1$.
	
	Finally, the standard inequality between total variation and $\chi^2$-divergence gives
	\[
	\bigl\|(R_L)_i-u\bigr\|_{\TV}
	\leq
	\frac{1}{2}
	\sqrt{\chi^2\bigl((R_L)_i\|u\bigr)}.
	\]
	Hence, by Jensen's inequality and Corollary~\ref{cor:regular-l2},
	\[
	\E\bigl\|(R_L)_i-u\bigr\|_{\TV}
	\leq
	\frac{1}{2}
	\sqrt{
		\E\chi^2\bigl((R_L)_i\|u\bigr)
	}
	\leq
	\frac{1}{2}\sqrt{\frac{d-1}{B_L}}	\leq
	\frac{1}{2\sqrt{\lambda_L}}.
	\]
When $\lambda_L\to \infty$, this provides convergence in $L^1$.
\end{proof}

\section{Applications}\label{sec:applications}

\subsection{Synchronous random-group averaging}

We return to the synchronous opinion averaging model introduced in
\cref{subsec:applications}, in which the agents are independently
reshuffled at each round into rooms of size $b$ and replace their
opinions by the average opinion in their room. We first prove
Theorem~\ref{thm:synchronous-averaging} and
Corollary~\ref{cor:synchronous-cutoff}, and then discuss some concrete
choices of the room size.

Recall that $\Psi(\lambda)=\frac{1}{2\lambda} \E |Z_\lambda - \lambda|$, where $Z_\lambda \sim \Pois(\lambda)$, and that we measure worst-case convergence to consensus by
\[
	\mathcal C_d(L)
	=
	\sup_{\mu\in\mathcal P([d])}
	\E\|\mu R_L-u\|_{\TV},
\]
where the supremum is over all probability vectors $\mu$ on $[d]$.

\begin{proof}[Proof of Theorem \ref{thm:synchronous-averaging}]
	The proof follows from a direct application of Theorem \ref{cor:wasserstein-poisson} and Corollary \ref{thm:tv-profile} once we see that the model falls under our framework with parameters
	\begin{equation}\label{eq:parameters-model}
	B_L=b^L,
	\qquad
	S_L=\frac{bL}{d},
	\qquad
	\lambda_L=\frac{b^L}{d}.
	\end{equation}
	Indeed, the Poisson convergence follows directly from (\ref{eq:wasserstein-bound-main}). On the other hand, since the map $\mu \mapsto \mathbb{E} \|\mu R_L - u\|_{\TV}$ is convex on the simplex $\Delta_d=\{x \in [0,1]^d \colon \sum_{i=1}^d x_i = 1\}$, its supremum is attained at an extreme point. Consequently,
	\[ \mathcal C_d(L) = \max_{i=1,\ldots, d} \mathbb{E}\|(R_L)_i - u \|_{\TV},\]
	and Corollary \ref{thm:tv-profile} provides uniform convergence.
	
	It remains to verify that the parameters of the model are indeed given by
	\eqref{eq:parameters-model}. Let $G$ be the graph consisting of $d/b$
	disjoint complete components of size $b$, with self-loops, and let $P$
	be its transition matrix. Thus $P$ is $b$-regular, and applying
	$P$ averages the opinions within each room. Since at every round the agents are reshuffled independently and uniformly, the successive changes of room assignments are independent
	uniform permutations. Hence, up to a final permutation of the agents,
	the dynamics is exactly of the form \eqref{eq:regular-product}, with
	\[
	P_1=\cdots=P_L=P,
	\qquad
	r_1=\cdots=r_L=b.
	\]
	Consequently, $B_L=b^L$, $\lambda_L=b^L/d$, and
	\[ S_L=\frac{1}{d}\sum_{\ell=1}^L b = \frac{bL}{d}.
	\]
	The final permutation merely relabels the agents and does not affect
	either $\nu_L^{(i)}$ or the total-variation distance from $u$.
\end{proof}

\begin{proof}[Proof of Corollary \ref{cor:synchronous-cutoff}]
	Set $n=\log_b d$ and take $L=n+c$. Then
	\[
	\lambda_L=\frac{b^{n+c}}{b^n}=b^c,
	\qquad
	S_L=\frac{b(n+c)}{b^n}\longrightarrow0.
	\]
	From
	Theorem~\ref{thm:synchronous-averaging} we deduce that
	\[ \mathcal C_{d}(n+c) \longrightarrow \Psi(b^c).\] 
	Since
	\[
	\Psi(b^c)\longrightarrow1
	\quad\text{as }c\to-\infty,
	\qquad
	\Psi(b^c)\longrightarrow0
	\quad\text{as }c\to+\infty,
	\]
	the transition to consensus takes place in a
	bounded window around $n=\log_b d$.
\end{proof}

When $b=2$, each round is a uniform perfect matching of the agents, with
all matched pairs averaging simultaneously. For $d=2^n$, the consensus
profile is
\[
\mathcal C_d(n+c)\longrightarrow \Psi(2^c),
\qquad c\in\mathbb Z.
\]
In particular, the cutoff occurs at $n=\log_2 d$ with a bounded window,
and at the critical round $L=n$ one has
\[
\mathcal C_d(n)\longrightarrow \Psi(1)=e^{-1}.
\]
At the same time,
\[
\nu_n^{(i)}(\{k\})\longrightarrow \frac{e^{-1}}{k!},
\qquad k\geq0,
\]
so the influence multiplicities of a fixed agent have an asymptotic
$\Pois(1)$ profile.

If instead $d=m^2$ and $b=m=\sqrt d$, then the transition takes place
in only a few rounds:
\[
\mathcal C_d(1)\longrightarrow1,
\qquad
\mathcal C_d(2)\longrightarrow e^{-1},
\qquad
\mathcal C_d(3)\longrightarrow0.
\]
Thus the critical Poisson profile appears after two rounds, while
consensus distance vanishes after one additional round.

Finally, we prove Theorem~\ref{thm:heterogeneous-averaging}.

\begin{proof}[Proof of Theorem~\ref{thm:heterogeneous-averaging}]
	At round $t$, after fixing an ordering of the rooms, the corresponding
	averaging kernel can be written as
	\[
	P_t
	=
	\operatorname{diag}\left(
	\frac{1}{b_{t,1}}\one\one^\mathsf T,
	\ldots,
	\frac{1}{b_{t,q_t}}\one\one^\mathsf T
	\right),
	\]
	where each block has the appropriate dimension. Thus $P_t$ is an
	orthogonal projection of rank $q_t$, and therefore
	\[
	\|P_t\|_F^2=q_t.
	\]
	Since $P_t$ is doubly stochastic,
	\[
	\|P_t-J\|_F^2
	=
	\|P_t\|_F^2-1
	=
	q_t-1.
	\]
	Hence
	\[
	\alpha(P_t)
	=
	\frac{q_t-1}{d-1}
	\leq
	\frac{q_t}{d}.
	\]
	By Corollary~\ref{cor:exact-l2}, for every probability vector $\mu$,
	\[
	\E\chi^2(\mu R_L\|u)
	=
	\chi^2(\mu\|u)
	\prod_{t=1}^L\alpha(P_t) \leq d\prod_{t=1}^L\frac{q_t}{d},
	\]
	where we have used that $\chi^2(\mu\|u)\leq d$. Using \eqref{eq:standard_divergence} and Jensen's inequality,
	\[
	\E\|\mu R_L-u\|_{\TV}
	\leq
	\frac12
	\sqrt{
		d\prod_{t=1}^L\frac{q_t}{d}
	}.
	\]
	Taking the supremum over all measures $\mu$ on $[d]$ gives the result.
\end{proof}

\section*{Acknowledgments}

This work was supported in part by the Center of Interdisciplinary Data Science and Artificial
Intelligence (CIDSAI) at New York University Abu Dhabi, funded by Tamkeen under the NYUAD Research Institute Award CG016.

\section*{Statement on the use of AI}
The mathematical ideas and original arguments in this work were developed by the author. During the preparation of the manuscript, ChatGPT (GPT-6) was used to refine and simplify proofs, as well as to obtain explicit quantitative constants. The aim was to preserve the original mathematical intuition while presenting the arguments in the clearest and simplest possible form. ChatGPT was also used to assist with literature review, manuscript editing, and the preparation of figures. All mathematical arguments were independently verified by the author, who takes full responsibility for the manuscript.

\end{document}